\theoremstyle{plain}
\newtheorem{theorem}{Theorem}[subsection]
\newtheorem{prop}[theorem]{Proposition}
\newtheorem{defs}[theorem]{Def\mbox{}inition}
\newtheorem{lemma}[theorem]{Lemma}
\newtheorem{coro}[theorem]{Corollary}
\newtheorem{rem}[theorem]{Remark}
\newtheorem{atheorem}{Theorem}[section]
\newtheorem{aprop}[atheorem]{Proposition}
\newtheorem{adefs}[atheorem]{Def\mbox{}inition}
\newtheorem{alemma}[atheorem]{Lemma}
\theoremstyle{remark}
\newtheorem{exa}[theorem]{Example}
\numberwithin{equation}{section}
\newcommand{\Sswj}{{ {\mathcal S}{\mathcal W}^*_A}}
\newcommand{\Ssawj}{{ {\mathcal S}{\mathcal W}^*_{\{A_\alpha\}}}}
\newcommand{\Swj}{{\mathcal W}^*_A}
\newcommand{\Volo}{{\underset{ {\mathcal A} }{\otimes} {\mathcal V}_X}}
\newcommand{\mc}[1]{\mathcal {#1}}
\newcommand{\ms}[1]{\mathscr {#1}}
\newcommand{\mrm}[1]{\mathrm{#1}}
\renewcommand{\phi}{\varphi}
\renewcommand{\theta}{\vartheta}
\renewcommand{\rho}{\varrho}
\newcommand{\ep}{\epsilon}
\newcommand{\lra}{\longrightarrow}
\renewcommand{\bigr}[1]{{\big(#1\big)}}
\renewcommand{\biggr}[1]{{\bigg(#1\bigg)}}
\newcommand{\bigc}[1]{{\big\{#1\big\}}}
\newcommand{\biggc}[1]{{\bigg\{#1\bigg\}}}
\newcommand{\ou}[3][]{\overset{{#1}}{\underset{{#2}}{{#3}}}}
\newcommand{\Mod}{\mathrm{Mod}}
\newcommand{\com}{\mathbb{C}}
\newcommand{\rea}{\mathbb{R}}
\newcommand{\integer}{\mathbb{Z}}
\newcommand{\integergz}{\mathbb{Z}_{>0}}
\newcommand{\nat}{\mathbb{N}}
\newcommand{\Op}{\mathrm{Op}}
\newcommand{\opxsac}{\mathrm{Op}^c(\xsa)}
\newcommand{\cov}[1]{\mathrm{Cov}_{sa}(#1)}
\newcommand{\xsa}{{X_{sa}}}
\newcommand{\cinfty}{\mathscr{C}^\infty}
\newcommand{\D}{\mathcal{D}}
\newcommand{\proofend}{\hfill $\Box$ \vspace{\baselineskip}\newline}
\author{N. Honda, G. Morando}
\title{Stratif\mbox{}ied Whitney jets and tempered ultradistributions on the subanalytic site}
\date{\empty}
\long\def\symbolfootnote[#1]#2{\begingroup%
\def\thefootnote{\fnsymbol{footnote}}\footnote[#1]{#2}\endgroup}
\begin{document}

\maketitle

\begin{abstract}
  In this paper we introduce the sheaf of stratif\mbox{}ied Whitney
  jets of Gevrey order on the subanalytic site relative to a real
  analytic manifold $X$. Then we def\mbox{}ine stratif\mbox{}ied
  ultradistributions of Beurling and Roumieu type on $X$. In the end,
  by means of stratif\mbox{}ied ultradistributions, we def\mbox{}ine
  tempered-stratif\mbox{}ied ultradistributions and we prove two
  results. First, if $X$ is a real surface, the tempered-stratif\mbox{}ied
  ultradistributions def\mbox{}ine a sheaf on the subanalytic site relative
  to $X$. Second, the tempered-stratif\mbox{}ied ultradistributions on the
  complementary of a $1$-regular closed subset of $X$ coincide with
  the sections of the presheaf of tempered ultradistributions.
\end{abstract}

\symbolfootnote[0]{\phantom{a}\hspace{-7mm}\textit{2000 MSC.} Primary 46M20
  ; Secondary 46F05 32B20 32C38.} 

\symbolfootnote[0]{\phantom{a}\hspace{-7mm}\textit{Keywords and
    phrases:} sheaves on subanalytic sites, tempered
  ultradistributions, Whitney jets.}

\vspace{-5mm}

\tableofcontents

\addcontentsline{toc}{section}{\textbf{Introduction}}
\section*{Introduction}
\markboth{Introduction}{}

One of the aim of the present article is to def\mbox{}ine tempered
ultradistributions of Beurling and Roumieu class of order $s>1$ and
Whitney jets with growth conditions as sheaves on the subanalytic site
relative to a real analytic manifold $X$. As growth conditions are not of local
nature, functional spaces def\mbox{}ined on open subsets of $X$, as tempered distributions, Whitney $\ms
C^\infty$-functions or holomorphic functions with polynomial growth
at the boundary do not glue on arbitrary coverings. In particular,
such spaces do not def\mbox{}ine sheaves on the usual topology of an analytic
manifold.  We recall the approach set by S. \L ojasiewicz  
(\cite{lojasiewicz}) later reinterpreted and generalized in the works of
M. Kashiwara and P. Schapira (see \cite{kashiwara_riemann-hilbert}, \cite{ks_moderate_formal_cohomology}
and \cite{ks_indsheaves}). They def\mbox{}ined tempered distributions and
Whitney $\ms C^\infty$-functions as sheaves on the subanalytic site, $\xsa$, 
relative to a real analytic manifold $X$. The open sets of $\xsa$ are
the relatively compact subanalytic open subsets of $X$ and 
the coverings are the locally f\mbox{}inite ones. The use of these objects in the
study of linear ordinary dif\mbox{}ferential equations gave
interesting results (see
\cite{morando_tempered_solutions_formal_invariants}). Let us mention
that function spaces with growth conditions, such as holomorphic
functions on the complex plane with moderate or Gevrey growth or
asymptotic expansion at the
origin, are treated as sheaves on the real blow up at the origin by
B. Malgrange in \cite{malgrange_birkhauser} and many other authors
elsewhere in litterature. Such function spaces are used in a
systematic way in the study of linear ordinary dif\mbox{}ferential 
equations. Some of these sheaves on the real blow up at the origin can be
obtained by specializing their subanalytic generalization (see
\cite{prelli_microlocalization_subanalytic_sheaves}).

Among the motivations of this paper there is the fact that the naive
def\mbox{}inition of tempered ultradistributions, mimicking that of tempered
distributions (see \cite{kashiwara_riemann-hilbert}), does not give a sheaf on the
subanalytic site, as explained in Section \ref{section_example}. 
Let us recall that tempered ultradistributions on an open set $U$ in
$X$ are def\mbox{}ined as global sections of ultradistributions modulo
ultradistributions with support on $X\setminus U$. This latter space
is the dual of Whitney jets with Gevrey like growth conditions on
$X\setminus U$. In this paper, we relax the condition on Whitney jets
with Gevrey like growth conditions by introducing the stratif\mbox{}ied Whitney jets on a
real analytic manifold $X$. We prove decomposition and gluing properties for stratied
Whitney jets on locally f\mbox{}initely many subanalytic subsets of $X$ (Lemma 
\ref{lemma:stratified-Whitney-exact}). Then we study the dual of
stratif\mbox{}ied Whiteny jets on a closed set $Z\subset X$, the space of
stratif\mbox{}ied ultradistributions on $Z$. This latter space is a subspace of
ultradistribution with support in $Z$. We study the decomposability of
stratif\mbox{}ied ultradistributions on arbitrary f\mbox{}initely many subanalytic
closed sets (Corollary \ref{coro:decomposability_sdb} and Corollary
\ref{coro:ultra-exact}). Then we def\mbox{}ine tempered-stratif\mbox{}ied
ultradistributions on $U$ as global ultradistributions modulo stratif\mbox{}ied
ultradistributions on $X\setminus U$. We prove that, when $X$ has
dimension $2$, tempered-stratif\mbox{}ied ultradistributions def\mbox{}ine a sheaf on
$\xsa$. Further, we prove that, if $X\setminus U$ satisf\mbox{}ies a
regularity condition, tempered-stratif\mbox{}ied ultradistributions on $U$
coincides with classical tempered ultradistributions on $U$ (Theorem
\ref{thm_qt_versus_t}). We conclude by proving that tempered-stratif\mbox{}ied
ultradistributions and other spaces of ultradistributions similarly
def\mbox{}ined do not give rise to sheaves on $\xsa$, if $X$ has dimension $>2$. 

Similar results on the decomposability of ultradistributions were obtained
by J.-M. Kantor (\cite{kantor}) and by A. Lambert (\cite{lambert}). Their
approach is quite dif\mbox{}ferent from 
our. Indeed, given $s>1$, they f\mbox{}ind a family $\mc T_s$ of subanalytic closed 
sets depending on $s$ such that ultradistributions of class $s$ decompose on sets
in $\mc T_s$. The family $\mc T_s$ is not closed under intersections
hence it is not possible to def\mbox{}ine a Grothendieck topology and a notion
of sheaf starting from it.

In the end, let us recall that ultradistributions and growth
conditions of Gevrey type turned out to 
be very useful in the functorial study of linear dif\mbox{}fential equations, being
strictly linked to the irregularity of equations. Let us cite, for example,
\cite{honda_reconstruction} and \cite{yamazaki} for some applications of
ultradistributions in the study of systems of linear dif\mbox{}ferential
equations. In the present article we do not use tempered-stratif\mbox{}ied ultradistributions to
study systems of linear dif\mbox{}ferential equations, postponing this
problem to future investigations. Throughout the paper, we just limit
to point out if the sheaves we def\mbox{}ine give rise to sheaves of
modules over the ring of linear dif\mbox{}ferential operators with analytic
coef\mbox{}f\mbox{}icients.

The paper is organized as follows. We start {\bf Section
\ref{section:recall}} by recalling the basic properties of
Whiteny jets with growth conditions. Then, mimicking
\cite{kashiwara_riemann-hilbert}, we def\mbox{}ine the presheaf of
tempered ultradistributions and we recall a condition, due to H. Komatsu,
for a continuous function to extend to the whole space as an
ultradistribution. In the end of the section, we prove that tempered
ultradistributions do not glue on f\mbox{}initely many subanalytic open
subsets of $\rea^2$.

In {\bf Section \ref{section:swj}} we start by recalling some
def\mbox{}initions and basic results on subanalytic sets and the subanalytic
site relative to a real analytic manifold $X$. Then, we def\mbox{}ine the space
of stratif\mbox{}ied Whitney jets with Gevrey growth conditions and we prove
that they give rise to a sheaf on the subanalytic site relative to
$X$. Then, we introduce the space of stratif\mbox{}ied ultradistributions on
$X$ and we prove that this space is dual to stratif\mbox{}ied Whitney
jets. In the end of the section, from the gluing property of
stratif\mbox{}ied Whitney jets, we obtain a decomposition property for
stratif\mbox{}ied ultradistributions.  

In {\bf Section \ref{section:2dim}} given a real analytic manifold
$X$, we def\mbox{}ine tempered-stratif\mbox{}ied ultradistributions on a subanalytic open
set $U\subset X$ which is a subspace of tempered ultradistributions on
$U$. Then, we prove two results. The f\mbox{}irst states that, if $\dim X=2$,
tempered-stratif\mbox{}ied ultradistributions def\mbox{}ine a sheaf on the subanalytic
site relative to $X$. The second states that if $X\setminus U$
satisf\mbox{}ies a regularity condition, then tempered-stratif\mbox{}ied
ultradistributions on $U$ coincide with tempered ultradistributions on
$U$.

In {\bf Appendix \ref{section:sgi}} we prove a result of density for
stratif\mbox{}ied Whitney jets in the space of Whitney jets. Such results is
needed in Section \ref{section:swj}, we prove it in the Appendix
as the proof is rather long and technical.

\section{Notations and review on Whitney jets and ultradistributions}\label{section:recall}

In this paper, we assume that a real analytic manifold is
countable at inf\mbox{}inity.

\subsection{Whitney jets with Gevrey conditions}

Let $X$ be a real analytic manifold. We denote by $\Mod(\com_X)$ the
category of sheaves on $X$ with values in $\com$-vector spaces,
and by $\cinfty$ the sheaf of infinitely differentiable functions on $X$.
We denote by $\pi_k: J^k \to X$ ($k\in\integer_{\geq0}$) 
the vector bundle associated with k-th jets over $X$. 
For any non-negative integers $k_1 \ge k_2$,
the morphism of vector bundles $j^{k_2,k_1}: J^{k_1} \to J^{k_2}$ 
is def\mbox{}ined by the canonical projection from $k_1$-th jets to
$k_2$-th jets.


Let $A$ be a locally closed subset in $X$, and $J^k(A)$ designates the
set of continuous sections of the vector bundle $J^k$ over $A$.  We
denote by $j^k_X: {\cinfty}(X) \to J^k(X)$ the canonical jets
extension morphism, and for any locally closed spaces $A \subset B$,
we designate by $j^k_{A,B}: J^k(B) \to J^k(A)$ the natural restriction
map from sections over $B$ to those over $A$. Composing $j^k_X$ and
$j^k_{A,X}$ we have the canonical morphism
$$
j^k_A = j^k_{A,X}\circ j^k_X: \cinfty(X) \to J^k(A).
$$


The morphism of vector bundles $j^{k_2,k_1}$ induces the
map
$$
j^{k_2,k_1}_A: J^{k_1}(A) \to J^{k_2}(A),
$$
and using these maps we def\mbox{}ine the jets space over $A$ by
$$
J(A) := \ou{k\in\integer_{\geq0}}{\varprojlim} J^k(A).
$$
The morphism $j^k_X$ (resp. $j^k_{A,B}$) induces
\begin{eqnarray*}
j_X&:& \cinfty(X) \to J(X)\\
(\text{resp.}\quad j_{A,B}  &:& J(B) \to J(A)) \ .
\end{eqnarray*}
We set $j_A := j_{A,X} \circ j_X: \cinfty(X) \to J(A)$.

If $X=\mathbb R^n$ with a system of coordinates $(x_1, x_2, \dots,
x_n)$, then the jets space $J(A)$ is isomorphic to the set
$$
\left\{\{f_\alpha\}_{\alpha \in ({\mathbb Z}_{\ge 0})^n};\, f_\alpha \in \ms
C^0(A)\right\} $$
where $\ms C^0(A)$ designates the set of continuous functions on $A$.
The map $j_A$ is identif\mbox{}ied with
$$
j_A(f) = \biggc{
\left. \frac{\partial^\alpha f}{\partial x^\alpha}\right|_A }_{\alpha \in (\integer_{\ge0})^n} \qquad
\text{for }
f \in\cinfty(X).
$$

\

Let $A$ be a locally closed subset in $X$. We def\mbox{}ine ${\mathcal
  J}_A\in\Mod(\com_X)$ by
$$
{\mathcal J}_A(U) := J(A \cap U)
$$
for $U$ an open subset of $X$.  If $A \cap U = \varnothing$, then we
consider $J(A \cap U)$ as the zero object.  The morphism $j_{A \cap U, U}
\circ j_U: \cinfty(U) \to J(A\cap U)$ induces the sheaf homomorphism
$j_A: \cinfty \to {\mathcal J}_A$.

We have that ${\mathcal J}_A$ is a sheaf of rings and modules over
$\D_X$: the sheaf of rings of linear dif\mbox{}ferential operators
with analytic coef\mbox{}ficients on $X$.

From now on, the symbol $*$ denotes $(s)$ or $\{s\}$ for some $s > 1$.
Let us recall the def\mbox{}inition of the sheaf ${\ms C}^*$ of
ultra-dif\mbox{}ferentiable functions of class $*$ in $X$.

First we need the notion of $1$-regular sets. 
\begin{defs}
\label{defs_regular}
We say that $A$ is 1-regular at $p \in X$ if there exist a
neighborhood $U \subset X$ of $p$, a neighborhood $V \subset \mathbb
R^n$ of the origin and an isomorphism $\psi: (U,p) \to (V,0)$
satisfying the following condition. There exist a positive constant
$\kappa > 0$ and a compact neighborhood $K \subset V$ of the origin
such that for any $x_1,x_2\in\psi(A \cap U) \cap K$ there exists a
subanalytic curve $l$ in $\psi(A \cap U)$ joining $x_1$ and $x_2$ and
satisfying the estimate
$$
|l| \le \kappa \vert x_1 - x_2 \vert,
$$
where $|l|$ stands for the length of $l$.

The set $A$ is said to be 1-regular if it is 1-regular at any point $p
\in X$.
\end{defs}

For a locally closed subanalytic subset $A$
(see Definition {\ref{def:semi-sub-analytic}} for a subanalytic set),
if $A$ is 1-regular then,
using the Curve Selection Lemma (see \cite{ks_som}), one proves that
$\overline{A}$ is also 1-regular. Clearly, the converse does not hold. For
example, let $X=\rea^2$ with coordinates $(x,y)$, $A=X\setminus
\{x=0\}$ is not 1-regular at any point in the set $\{x=0\}$, but
$\overline{A} = \mathbb R^2$ is 1-regular at every point. Moreover a
1-regular set is locally connected at every point in $X$, that is, for any $p \in X$, there
exists a family $\{V_i\}$ of fundamental neighborhoods of $p$
satisfying $V_i \cap A$ is connected.  


%

\

Let $(x_1, x_2, \dots, x_n)$ be a system of coordinates of $\rea^n$ and
$V \subset \rea^n$ a 1-regular relatvely compact open subset.
Let us recall that, given $s>1$ and $h>0$, 
the space $\ms C^{s,h}(\overline{V})$ consists of $f\in\ms C^\infty(V)$
whose arbitrary partial derivative extends to a continuous function
on $\overline{V}$ with the following growth condition.  There exists $C>0$ such that, for any
$\alpha\in(\integer_{\geq0})^n$,
\begin{equation}
  \label{eq:gevrey_est}
  \ou{x\in \overline{V}}{\sup}\big|D^\alpha f (x) \big|\leq Ch^{|\alpha|}(|\alpha|!)^s \ ,
\end{equation}
where 
$D^\alpha:=\bigr{\frac\partial{\partial
    x_1}}^{\alpha_1}\ldots\bigr{\frac \partial{\partial
    x_n}}^{\alpha_n}$
for $\alpha=(\alpha_1,\ldots,\alpha_n)\in(\integer_{\geq0})^n$.

We denote by $\ms D^{s,h}_{\overline{V}}$ 
the set of
functions $f \in \ms C^{s,h}(\overline{V})$ 
with $D^\alpha f \vert_{\overline{V}\setminus V} = 0$ for any $\alpha \in(\integer_{\geq0})^n$.
The spaces $\ms C^{s,h}(\overline{V})$ and $\ms D^{s,h}_{\overline{V}}$ endowed with the
norm
$$ ||f||_{\overline{V},s,h}:=\ou{
\begin{subarray}{c}x \in \overline{V}\\\alpha\in(\integer_{\geq0})^n\end{subarray}  
}{\sup}\ \frac{|D^\alpha f (x)|}{h^{|\alpha|}(|\alpha|!)^s}$$ 
are Banach spaces.

Given an open set $U\subset \rea^n$, we set
\begin{eqnarray*}
\ms C^{ (s) }(U)  &  :=  &  \ou{V\Subset U}{\varprojlim}\ou{h>0}{\varprojlim}\ms C^{s,h}(\overline{V}) \ , \\
\ms C^{\{s\}}(U)  &  :=  &  \ou{V\Subset U}{\varprojlim}\ou{h>0}{\varinjlim}\ms C^{s,h}(\overline{V})  \ .
\end{eqnarray*}
Here $V$ runs through 1-regular relatively compact open subsets of $U$.

In \cite{komatsu_ultradistributionsII} (see also
\cite{roumieu_ultradistributions}), it is proved that, given open sets
$W,W'\subset\rea^n$ and a real analytic isomorphism $\Phi:W\to W'$,
the morphism $\cdot\circ\Phi:\ms C^*(W')\to \ms C^*(W)$ is
an isomorphism. Hence, for
an open subset $U$ in a real analytic mamifold $X$,
the set of ultra-dif\mbox{}ferentiable functions
$\ms C^{*}(U)$ is also well-defined.

\

Let $X$ be a real analytic manifold.
One checks easily that $\ms C^{\{s\}}$ and $\ms C^{(s)}$ 
are sheaves on $X$. 


\begin{defs}
  For a locally closed subset $A$ in $X$, the image sheaf $j_A({\ms
    C}^*) \subset {\mathcal J}_A$ is called the sheaf of Whitney jets
  of class $*$ over $A$, and we denote it by ${\mathcal W}_A^*$.
\end{defs}

Note that ${\mathcal W}^*_A$ is a sheaf of rings and a ${\mathcal
  D}_X$-module.  By the partition of the unity, the def\mbox{}inition
of ${\mathcal W}^*_A$ is equivalent to the following.  Given $F \in
{\mathcal J}_A(U)$, then $F$ belongs to ${\mathcal W}_A^*(U)$ if and
only if there exists $f \in {\ms C}^*(U)$ with $j_A(f) = F$.

It follows from the def\mbox{}inition that for any $F \in {\mathcal
  W}^*_A(U)$ there exists an unique $\tilde{F} \in {\mathcal
  W}^*_{\overline{A}}(U)$ such that $j_{A,\overline{A}}(\tilde{F}) = F$. Hence
the restriction map
$$
j_{A,\overline{A}}:  {\mathcal W}^*_{\overline{A}}(U) \to
{\mathcal W}^*_A(U)
$$
is an isomorphism.

Let $A \subset \rea^n$ be a locally closed set, $U \subset \rea^n$ an open set and $s > 1$.
We introduce two families of semi-norms on $\mc J_A(U)$.

For $h>0$ and $K \subset A \cap U$, set
$$ ||F||_{K,s,h}:= 
\ou
{\begin{subarray}{c}y \in K\\ \alpha \in (\integer_{\ge 0})^n\end{subarray}}
\sup
\frac{\vert f_\alpha(y) \vert}{\vert \alpha \vert!^sh^{\vert \alpha
    \vert}} 
\qquad(F = \{f_\alpha\}_{\alpha\in (\integer_{\ge0})^n} \in {\mathcal J}_A(U))
\ . $$


We define, for $h > 0$ and $K\subset A\cap U$,
another semi-norm
$\vert\vert \cdot \vert\vert_{K,s,h}^{\mathcal W}$ on $\mc J_A(U)$ 
in the following manner.
Given $F=\{f_\alpha\}_{\alpha \in (\integer_\ge0)^n} \in {\mathcal J}_A(U)$ and $\beta \in
(\integer_{\ge 0})^n$ and $x,x' \in A\cap U$, we set
\begin{eqnarray*}
S_\beta F  &  :=  &  \{f_{\alpha+\beta}\}_\alpha \ ,\\
T_m(F;\,x,\,x')   & :=  & \sum_{\vert \alpha \vert \le m}
\frac{1}{\alpha!}f_\alpha(x')(x-x')^{\alpha}  \ , \\
R_m(F;\,x,\,x')  & :=  & f_0(x) - T_m(F;\,x,\,x')  \ . \\ 
\end{eqnarray*}
Then $\vert\vert F \vert\vert_{K,s,h}^{\mathcal W}$ is defined by
$$
\sup_{m \ge 0, \alpha}
\frac{m!}{(\vert \alpha \vert + m+1)!^s h^{\vert \alpha \vert + m+1}}
\left(\sup_{x,x' \in K,\, x \ne x'} 
\frac{\vert R_m( S_\alpha F;\, x,\,x') \vert}{\vert x - x' \vert^{m+1}}
\right).
$$
%
The following characterization of $\Swj$ can be found in
 \cite{kantor}.

\begin{theorem}\label{thm:w_norm_charact}
Let $A \subset \rea^n$ be a locally closed set and $U\subset \rea^n$ an open set.
\begin{enumerate}
\item A jet $F \in {\mathcal J}_A(U)$ belongs to $\mc{W}^{(s)}_A (U)$
  if and only if, for any $h > 0$ and any compact set $K$ in $U$,
  $\vert\vert F \vert\vert_{A \cap K,s,h} < \infty$ and $\vert\vert F
  \vert\vert_{A \cap K,s,h}^{\mathcal W} < \infty$ hold.
\item A jet $F \in {\mathcal J}_A(U)$ belongs to $\mc{W}^{\{s\}}_A
  (U)$ if and only if, for any compact set $K$ in $U$, there exists
  $h>0$ such that $\vert\vert F \vert\vert_{A \cap K,s,h} < \infty$
  and $\vert\vert F \vert\vert_{A \cap K,s,h}^{\mathcal W} < \infty$.
\end{enumerate}
\end{theorem}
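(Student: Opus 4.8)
The plan is to recognize Theorem \ref{thm:w_norm_charact} as a Whitney extension theorem for the Gevrey classes $\ms C^{(s)}$ and $\ms C^{\{s\}}$ and to deduce it from the classical such theorem on a compact set. I would run the Beurling case $*=(s)$ and the Roumieu case $*=\{s\}$ in parallel, reading ``for every $h$'' in the former and ``for some $h$'' in the latter.

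For necessity I would start from $F=j_A(f)$ with $f\in\ms C^{*}(U)$, so $f_\alpha=D^\alpha f\vert_{A\cap U}$. Fixing a compact $K\subset U$ and a $1$-regular relatively compact open $V$ with $K\subset V\Subset U$, one has $f\vert_V\in\ms C^{s,h'}(\overline V)$ for every (resp.\ some) $h'>0$, and finiteness of $\Vert F\Vert_{A\cap K,s,h}$ follows at once from \eqref{eq:gevrey_est} and $|\alpha|!\le n^{|\alpha|}\alpha!$. For the Whitney semi-norm the key step is Taylor's formula with integral remainder, applied to $g=D^\alpha f$: when the segment $[x',x]$ lies in $V$,
$$
R_m(S_\alpha F;x,x')=(m+1)\!\!\sum_{|\gamma|=m+1}\frac{(x-x')^\gamma}{\gamma!}\int_0^1(1-t)^m D^{\gamma+\alpha}f\big(x'+t(x-x')\big)\,dt ,
$$
after which inserting the Gevrey bound on $D^{\gamma+\alpha}f$ makes the factor $(|\alpha|+m+1)!^s$ cancel, so that a choice of $h'$ small relative to $h$ bounds the corresponding term of $\Vert F\Vert^{\mathcal W}_{A\cap K,s,h}$ uniformly in $m,\alpha$. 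The pairs with $[x',x]\not\subset V$, for which $|x-x'|\ge\mathrm{dist}(K,\rea^n\setminus V)>0$, I would treat separately, estimating $R_m$ termwise by the already controlled $\Vert F\Vert_{A\cap K,s,h''}$ (with $h''$ small) and dividing by this lower bound.

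For sufficiency — the substantial direction — suppose both semi-norms are finite on every $A\cap K$. I would first observe that the case $m=0$ of the Whitney bound makes each $f_\alpha$ locally Lipschitz on $A\cap U$, hence continuously extendable to $\overline A\cap U$; the resulting jet $\tilde F=\{\tilde f_\alpha\}$ on $\overline A$ still has finite semi-norms, since any compact $L\subset\overline A\cap U$ lies in $\overline{A\cap K'}$ for a compact neighborhood $K'$ of $L$ in $U$, on which $A\cap K'$ is dense, and the semi-norms are continuous functions of the jet values. By the isomorphism $j_{A,\overline A}\colon\mc W^*_{\overline A}(U)\to\mc W^*_A(U)$ recalled above one may thus assume $A$ closed in $U$. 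Next, for each $p\in U$ I would choose a $1$-regular relatively compact open $W$ with $p\in W$ and $\overline W\subset U$; then $\overline A\cap\overline W$ is compact in $\rea^n$ and the finiteness of $\Vert\tilde F\Vert_{\overline A\cap\overline W,s,h}$ and $\Vert\tilde F\Vert^{\mathcal W}_{\overline A\cap\overline W,s,h}$ is exactly the hypothesis of the Whitney extension theorem for the class $*$ (see \cite{komatsu_ultradistributionsII}, \cite{kantor}), which furnishes $g_p\in\ms C^{*}(\rea^n)$ with $D^\alpha g_p(q)=\tilde f_\alpha(q)$ for all $q\in\overline A\cap\overline W$ and all $\alpha$; here $s>1$ is used to ensure non-quasianalyticity, hence the existence of cutoff functions of class $*$. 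Finally I would cover $U$ by such $W_i$, pick a partition of unity $\{\rho_i\}$ of class $*$ subordinate to $\{W_i\}$, and set $f:=\sum_i\rho_i g_{p_i}$; locally this is a finite sum, so $f\in\ms C^{*}(U)$, and for $q\in A\cap U$ the Leibniz rule gives $j_q(f)=\sum_i j_q(\rho_i)j_q(g_{p_i})=\big(\sum_i j_q(\rho_i)\big)F_q=j_q\big(\sum_i\rho_i\big)F_q=F_q$ (using $j_q(g_{p_i})=F_q$ whenever $q\in\supp{\rho_i}\subset W_i$ and $\sum_i\rho_i=1$, with $j_q$ the jet at $q$). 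Thus $j_A(f)=F$ and $F\in\mc W^*_A(U)$.

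The genuine content is the classical Whitney--Gevrey extension theorem; granting it, the rest is the bookkeeping above, and the points I expect to need care are the near/far dichotomy in the remainder estimate, together with the matching of the auxiliary growth constant ($h'$ or $h''$) to the target $h$, and the verification that, after passing to $\overline A$, the two families of semi-norms coincide with those occurring as hypotheses of the extension theorem.
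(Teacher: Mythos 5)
The paper does not actually prove Theorem \ref{thm:w_norm_charact}: it is stated with the remark that the characterization ``can be found in \cite{kantor}'', so there is no internal proof to compare yours against. Judged on its own terms, your proposal is a sound localization scheme that reduces the statement to the compact-set Whitney extension theorem for the classes $\ms C^{(s)}$ and $\ms C^{\{s\}}$, which is precisely the result the authors are outsourcing; you should be explicit that this black box carries essentially all of the content (the construction of the extension with the correct Gevrey bounds, which is where non-quasianalyticity and the choice of the weight $(|\alpha|+m+1)!^s$ in the $\mathcal W$-semi-norm really enter). Your necessity argument is correct: the Taylor remainder identity and the cancellation of $(|\alpha|+m+1)!^s$ work as you say, and the near/far dichotomy is legitimate --- indeed, if $x,x'\in K$ and the segment $[x',x]$ meets $\rea^n\setminus V$ at some intermediate point $z$, then $|x-x'|=|x-z|+|z-x'|\ge 2\,\mathrm{dist}(K,\rea^n\setminus V)$, and the far pairs are then handled by exactly the termwise computation the paper itself performs in the proof of Corollary \ref{coro:1-regular-basic}. (The inequality $|\alpha|!\le n^{|\alpha|}\alpha!$ is not needed for the first semi-norm, since both \eqref{eq:gevrey_est} and $\vert\vert\cdot\vert\vert_{K,s,h}$ are written with $|\alpha|!^s$.) The sufficiency direction --- extension of the jet to $\overline A\cap U$ via the $m=0$ Lipschitz bound, application of the compact-set extension theorem on $A\cap\overline W$ for $1$-regular $W\Subset U$, and patching by a $\ms C^*$ partition of unity using that jet restriction is a ring homomorphism --- is the standard and correct route; the only points deserving a written verification are that the semi-norms of the extended jet on $\overline A\cap K$ are controlled by those of $F$ on $A\cap K'$ by density and continuity, and that in the Roumieu case the constant $h$ produced by the extension theorem is allowed to depend on the compact set, which is harmless because $\ms C^{\{s\}}(U)$ is itself a projective limit over $V\Subset U$ of inductive limits in $h$.
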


\

Let $A \subset X = \mathbb R^n$ be a compact set. We set
$$
{\mathcal W}^{s,h}_A(X) = \{ F \in {\mathcal J}_A(X);\,
\vert\vert F \vert\vert_{A,s,h} +
\vert\vert F \vert\vert_{A,s,h}^{\mathcal W} < \infty\} \ .
$$
Endowing ${\mathcal W}^{s,h}_A(X)$ with the norm $\vert\vert \cdot
\vert\vert_{A,s,h} + \vert\vert \cdot \vert\vert_{A,s,h}^{\mathcal
  W}$, it becomes a Banach space.  It follows from Theorem
\ref{thm:w_norm_charact} that
\begin{eqnarray*}
{\mathcal W}^{(s)}_A(X) &=& \ou{h>0}{\varprojlim}\, {\mathcal W}^{s,h}_A(X)\qquad\text{and}\\
{\mathcal W}^{\{s\}}_A(X) &=& \ou{h>0}{\varinjlim}\, {\mathcal W}^{s,h}_A(X) \ .
\end{eqnarray*}
It follows that $\Swj(X)$ can be endowed with a locally convex
topology by these projective or inductive limits.  It is easy to see
$\Swj(X)$ is an {\bf{FS}} space (resp. a {\bf{DFS}} space) if $* = (s)$
(resp.  $*=\{s\}$) respectively.

\subsection{Ultradistributions}

For a complete presentation of the theory of ultradistributions, we cite
\cite{komatsu_ultradistributionsI}.

Let $X$ be a real analytic manifold. Let us recall that, given a sheaf
$F$ on $X$ and $U\subset X$ an open set, we denote by $\Gamma_c(U,F)$,
the set of sections of $F$ on $U$ with compact support.  

Given $U\subset \rea^n$ open, the locally convex topological vector spaces $\ms D^{*}(U)$ and $\ms
D^{s,h}(U)$ are def\mbox{}ined as
\begin{eqnarray*}
  \ms D^{s,h}(U) & := & \ou{V\Subset U}{\varinjlim}\ \ms D^{s,h}_{\overline{V}}\simeq \Gamma_c(U,\ms C^{s,h}) \ ,\\
  \ms D^{(s)}(U) & := & \ou{V\Subset U}{\varinjlim}\ \ou{h>0}{\varprojlim}\ \ms D^{s,h}_{\overline{V}}\simeq \Gamma_c(U,\ms C^{(s)}) \ ,\\
  \ms D^{\{s\}}(U) & := & \ou{V\Subset U}{\varinjlim}\ \ou{h>0}{\varinjlim}\ \ms D^{s,h}_{\overline{V}}\simeq \Gamma_c(U,\ms C^{\{s\}}) \ .  
\end{eqnarray*}

In \cite{komatsu_ultradistributionsII} (see also
\cite{roumieu_ultradistributions}), it is proved that, given open sets
$W,W'\subset\rea^n$ and a real analytic isomorphism $\Phi:W\to W'$,
the morphism $\cdot\circ\Phi:\ms D^*(W')\to \ms D^*(W)$ is
an isomorphism. Hence we can define
$\ms D^{*}(U)$ for an open set $U$ in a real analytic manifold.


\begin{defs}
  \begin{enumerate}
  \item Let $X$ be a real analytic manifold of dimension $n$. We
    denote by $\ms V^*$ the sheaf on $X$ of volume elements with
    coefficients in $\ms C^*$, that is $\ms V^*:= \ms
    C^*\underset{\mathcal A}{\otimes} \omega^{(n)}_X \underset{\integer}{\otimes} or_X$, 
    where $\omega^{(n)}_X$ $($resp. $\mathcal A)$ is the sheaf of real analytic n-forms 
    $($resp. functions$)$ on $X$, and $or_X$ is that of orientations on
    $X$.
\item Let $U$ be an open subset of $X$. The space of
  \emph{ultradistributions on $U$ of class $(s)$ of Beurling type}
  $($resp. \emph{of class $\{s\}$ of Roumieu type}$)$, or simply \emph{of
    class $(s)$} $($resp. \emph{$\{s\}$}$)$, denoted $\mc Db^{(s)}(U)$
  $($resp. $\mc Db^{\{s\}}(U))$, is def\mbox{}ined as the strong dual space of
  $\Gamma_c(U,\ms V^{(s)})$ $($resp. $\Gamma_c(U,\ms V^{\{s\}}))$.
  \end{enumerate}
\end{defs}

In \cite {pierre_ultradistributions}, it is proved that $\mc Db^{*}$
is a sheaf on $X$.

Given a closed set $Z\subset X$ and $F\in\Mod(\com_X)$ denote by
$\Gamma_Z(F)$ the subsheaf of $F$ of sections supported by $Z$.

If $A$ is a compact subset of $X=\rea^n$, from results of
H. Whitney and J.-M. Kantor, it follows that the topological dual of
$\mc W_A^{*}(X)$ is isomorphic to $\Gamma_A(X,\mc Db^{*})$.

\begin{defs}
  The presheaf of tempered ultradistributions of class $*$ on $X$,
  denoted $\mc Db^{*t}_X$, is def\mbox{}ined by
  $$\mc Db^{*t}_X(U):=\frac{\Gamma(X;\mc
    Db^{*})}{\Gamma_{X\setminus U}(X;\mc Db^{*})} \ ,\quad U\subset
  X\text{ open.}$$ 
Note that $\mc Db^{*t}_X$ is not a sheaf on $X$.
\end{defs}

In Proposition \ref{prop_komatsu_lemma} below (originally due to H.
Komatsu), we recall a suf\mbox{}f\mbox{}icient condition for a continuous function
to extend to $\rea^n$ as an ultradistribution. 
For $U$ an open subset
of $\rea^n$, we denote by $\mc Db^{s,l}(U)$ the dual space of $\ms
D^{s,l}(U)$, and by $L_{loc}(U)$ the set of locally integrable functions on $U$.

\begin{prop}\label{prop_komatsu_lemma}
  Let $U\subset\rea^n$ be an open set and $f \in L_{loc}(U)$.
Suppose that there exist positive constants $h$ and $C$ satisfying
$$ |f(x)|\leq C\exp\bigg(\frac h{\mrm{dist}(x,\partial U)^{\frac1{s-1}}}\bigg)  \ .$$ 
Then we have $f\in\mc Db^{(s)t}_X(U)$.
\end{prop}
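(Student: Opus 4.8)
The plan is to reduce the statement to a duality estimate: since $\mc Db^{(s)t}_X(U)=\Gamma(X;\mc Db^{(s)})/\Gamma_{X\setminus U}(X;\mc Db^{(s)})$, it suffices to produce an ultradistribution $T\in\Gamma(X;\mc Db^{(s)})=\mc Db^{(s)}(\rea^n)$ whose restriction to $U$ is the image of $f$ (i.e.\ $T$ acts on test volume elements supported in $U$ by integration against $f$), and to do this it is enough to show that the linear functional $\omega\mapsto\int_U f\,\omega$ is continuous on $\Gamma_c(\rea^n;\ms V^{(s)})$ for the $\ms D^{(s)}$-topology; by Hahn--Banach this functional then extends to all of $\Gamma_c(\rea^n;\ms V^{(s)})$, giving the desired $T$. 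So the whole problem is a quantitative estimate: for $\varphi\in\ms D^{s,h'}_{\overline V}$ with $V\Subset\rea^n$ one must bound $\big|\int_U f\varphi\,dx\big|$ by a constant times $\|\varphi\|_{\overline V,s,h'}$ for suitable $h'$.

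First I would set up the estimate using the hypothesis $|f(x)|\le C\exp\big(h\,\mrm{dist}(x,\partial U)^{-1/(s-1)}\big)$. Fix a test function $\varphi\in\ms D^{s,l}(U)$, say supported in a compact $K\subset U$; then $\int_U f\varphi\,dx=\int_K f\varphi\,dx$ and the exponential factor is bounded on $K$, so for a single fixed compact this is trivial --- the real content is to get a bound \emph{uniform in the support}, i.e.\ one that works for all $\varphi\in\ms D^{(s)}(\rea^n)=\Gamma_c(\rea^n;\ms C^{(s)})$ with the functional continuous on the whole inductive limit. The standard Komatsu device is: for $x\in U$ with $d:=\mrm{dist}(x,\partial U)$ small, use that $\varphi$ together with all its derivatives vanishes on $\rea^n\setminus U$, so Taylor-expanding $\varphi$ from a nearby boundary point and using the Gevrey bounds $|D^\alpha\varphi|\le \|\varphi\|\, l^{|\alpha|}(|\alpha|!)^s$ yields, after optimizing the order of the Taylor remainder in $|\alpha|$ (the $(|\alpha|!)^s$ against $d^{|\alpha|}$ optimization produces exactly the $\exp(-c\,d^{-1/(s-1)})$ decay), an estimate $|\varphi(x)|\le C'\|\varphi\|\exp\big(-c\, l^{-1/(s-1)} d^{-1/(s-1)}\big)$ near $\partial U$. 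Choosing $l$ small enough that $c\,l^{-1/(s-1)}>h$ makes the product $|f(x)\varphi(x)|$ bounded by $C C'\|\varphi\|\exp\big((h-c l^{-1/(s-1)})d^{-1/(s-1)}\big)$, which is integrable over $U$ (it decays near $\partial U$ and $\varphi$ has compact support), giving $\big|\int_U f\varphi\big|\le C''\|\varphi\|_{\overline V,s,l}$.

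The main obstacle is precisely this interior decay estimate for a Gevrey-$(s)$ test function near the boundary of its support, with the correct dependence of the exponential rate on the Gevrey constant $l$ --- this is the heart of Komatsu's lemma, and one must be careful that the Beurling condition (the bound must hold for \emph{every} $l>0$, with constant depending on $l$) is compatible with needing $l$ small: indeed choosing $l$ small only strengthens the hypothesis on $\varphi$, and the resulting functional is still continuous on $\ms D^{(s)}(U)=\varinjlim_{V}\varprojlim_{l}\ms D^{s,l}_{\overline V}$ because continuity of a linear functional on a projective limit only requires a bound by \emph{one} of the seminorms. A secondary, more bookkeeping-type point is to check that the functional defined on volume elements supported in $U$ genuinely extends (via Hahn--Banach on the ambient space $\Gamma_c(\rea^n;\ms V^{(s)})$ with its natural topology) and that the class of the extension in the quotient $\mc Db^{(s)t}_X(U)$ is well defined and represents $f$; this is formal once the estimate is in hand. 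I would present the Taylor-remainder optimization as the single lemma-level computation and treat the rest as routine functional analysis.
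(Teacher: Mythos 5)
Your proposal is correct and takes essentially the same approach as the paper's proof: the authors also reduce the statement, via Hahn--Banach, to the single-seminorm estimate $\big|\int_U f\varphi\,dx\big|\le M\,\vert\vert\varphi\vert\vert_{U,s,l}$ for one suitable $l>0$, and obtain it from the interior decay bound $|\varphi(x)|\le C\exp\big(-\kappa\, l^{-1/(s-1)}\,\mathrm{dist}(x,\partial U)^{-1/(s-1)}\big)\,\vert\vert\varphi\vert\vert_{U,s,l}$, proved exactly as you describe by Taylor expansion from a nearest boundary point and optimization of the order of the remainder via Stirling, after which one picks $l$ small enough that $\kappa l^{-1/(s-1)}>h$. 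The only difference is cosmetic: the paper disposes of your ``uniformity in the support'' concern by noting the problem is local and assuming $U$ relatively compact from the outset.
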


We need some technical results.

\begin{lemma}\label{lemma_komatsu_prelemma}
  Let $U$ be a relatively compact open subset of $\rea^n$.
  \begin{enumerate}
  \item Let $f \in L_{loc}(U)$. If there exist positive constants $l$ and $M$ such that, for any
  $\phi\in\ms D^{s,l}(U)$,
$$ \bigg|\int_Uf\,\phi\,dx\bigg|\leq M
  \vert\vert \varphi \vert\vert_{U,\, s,\, l}, $$
  then $f$ extends to an ultradistribution in $\mc Db^{s,l}(\rea^n)$. In particular,
  $f$ belongs to $\mc Db^{(s)t}_X(U)$.
  
\item There exist constants $C>0$ and $\kappa > 0$ such that, for any $l>0$ and for any
  $\phi\in\ms D^{s,l}(U)$, the inequality
$$ |\phi(x)|\leq C\exp\left({\frac{-\kappa l^{\frac{-1}{s-1}}}{\mathrm{dist}(x,\partial U)^{\frac1{s-1}}}}\right)
  \vert\vert \varphi \vert\vert_{U,\,s,\,l} 
$$
holds for any $x\in U$. 
  \end{enumerate}
\end{lemma}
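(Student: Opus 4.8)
For part (1), I would read the hypothesis as saying that $T:\phi\mapsto\int_U f\,\phi\,dx$ is a linear functional on $\ms D^{s,l}(U)=\varinjlim_{V\Subset U}\ms D^{s,l}_{\overline V}$ dominated by $M\,\vert\vert\cdot\vert\vert_{U,s,l}$, i.e. $T\in\mc Db^{s,l}(U)$, and then propagate it to $\rea^n$. Using that $U$ is relatively compact, fix an exhaustion $\rea^n=\bigcup_{j\ge0}B_j$ by concentric open balls with $\overline U\subset B_0$; each $\overline{B_j}$ is $1$-regular, so $\ms D^{s,l}(\rea^n)=\varinjlim_j\ms D^{s,l}_{\overline{B_j}}$ and $\mc Db^{s,l}(\rea^n)=(\ms D^{s,l}(\rea^n))'$. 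The key observation is that, since every derivative of an element of $\ms D^{s,l}_{\overline V}$ vanishes on $\overline V\setminus V$, extension by zero identifies $\ms D^{s,l}_{\overline V}$ isometrically with a subspace of $\ms D^{s,l}_{\overline{B_0}}$ (via $\ms D^{s,l}(U)\simeq\Gamma_c(U,\ms C^{s,l})$); this yields an isometric embedding $\iota:\ms D^{s,l}(U)\hookrightarrow\ms D^{s,l}_{\overline{B_0}}$ through which the canonical map $\ms D^{s,l}(U)\to\ms D^{s,l}(\rea^n)$ factors. Hahn--Banach extends $T\circ\iota^{-1}$ to $\tilde T_0\in(\ms D^{s,l}_{\overline{B_0}})'$ with $\vert\vert\tilde T_0\vert\vert\le M$; iterating --- each $\ms D^{s,l}_{\overline{B_j}}\hookrightarrow\ms D^{s,l}_{\overline{B_{j+1}}}$ is again isometric, so Hahn--Banach produces $\tilde T_{j+1}$ extending $\tilde T_j$ --- the $(\tilde T_j)_j$ form, by construction, a projective system, and the functional $\tilde f$ on $\varinjlim_j\ms D^{s,l}_{\overline{B_j}}$ they determine lies in $\mc Db^{s,l}(\rea^n)$ and restricts to $T=f$ on $U$. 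For the final assertion, composing with the dual of $\ms D^{(s)}(\rea^n)\to\ms D^{s,l}(\rea^n)$ places (the image of) $\tilde f$ in $\mc Db^{(s)}(\rea^n)=\Gamma(\rea^n,\mc Db^{(s)})$ ($\mc Db^{(s)}$ being a sheaf), a global ultradistribution restricting to $f$ on $U$, which is precisely the meaning of $f\in\mc Db^{(s)t}_X(U)$.

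The only delicate point in (1) is that Hahn--Banach extensions are not canonical, so extensions over the various $B_j$ need not be mutually compatible; I circumvent this by carrying out the extension inductively along a single fixed exhaustion, so that compatibility is automatic.

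For part (2), fix $l>0$ and $\phi\in\ms D^{s,l}(U)$, say $\phi\in\ms D^{s,l}_{\overline V}$ with $V\Subset U$ a $1$-regular relatively compact open set; then $\phi$ and all its partial derivatives vanish on $\rea^n\setminus V$ (the jet vanishes on $\overline V\setminus V$ by definition, and $\phi\equiv0$ off $\overline V$). If $x\notin V$ then $\phi(x)=0$; otherwise set $d:=\mathrm{dist}(x,\rea^n\setminus V)>0$, so $d\le\mathrm{dist}(x,\rea^n\setminus U)=\mathrm{dist}(x,\partial U)$ because $V\subset U$. Pick $y_0\in\rea^n\setminus V$ with $|x-y_0|=d$; then $y_0\in\partial V$ and the segment joining $x$ to $y_0$, with $y_0$ removed, stays inside the ball of centre $x$ and radius $d$, which is contained in $V$. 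Restricting $\phi$ to that segment and using Taylor's formula with integral remainder expanded at $y_0$, where the entire jet of $\phi$ vanishes, together with the bound $\sup_{z\in U}|D^\alpha\phi(z)|\le\vert\vert\phi\vert\vert_{U,s,l}\,l^{|\alpha|}|\alpha|!^s$, I get, for every $N\ge0$,
\[|\phi(x)|\ \le\ \vert\vert\phi\vert\vert_{U,s,l}\,(n\,l\,d)^{N}\,(N!)^{s-1}.\]
The remainder of the argument is the classical Gevrey optimisation: for $nld<1$, taking $N\sim(nld)^{-1/(s-1)}$ and applying Stirling yields $\inf_N(nld)^{N}(N!)^{s-1}\le C\exp\big(-\kappa_0(nld)^{-1/(s-1)}\big)$ with $C,\kappa_0>0$ depending only on $s$; for $nld\ge1$ the trivial bound $|\phi(x)|\le\vert\vert\phi\vert\vert_{U,s,l}$ does the job, since then $\mathrm{dist}(x,\partial U)\ge(nl)^{-1}$ and the asserted right-hand side is $\ge\vert\vert\phi\vert\vert_{U,s,l}\exp(-\kappa n^{1/(s-1)})$. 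Putting $\kappa:=\kappa_0\,n^{-1/(s-1)}$, enlarging $C$, and using $d\le\mathrm{dist}(x,\partial U)$ together with the fact that $t\mapsto\exp(-\kappa l^{-1/(s-1)}t^{-1/(s-1)})$ is increasing, one obtains the claimed inequality. I do not expect any genuine obstacle here beyond bookkeeping: the content is the one-line Taylor estimate, the remark that $\mathrm{dist}(x,\rea^n\setminus V)$ governs the bound and is controlled by $\mathrm{dist}(x,\partial U)$, and the standard computation of the associated function of the Gevrey weight --- the one thing to check is that $C$ and $\kappa$ end up independent of $l$.
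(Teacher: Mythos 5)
Your proposal is correct and follows essentially the same route as the paper: part (1) is the Hahn--Banach extension (your inductive exhaustion by balls is a careful but inessential elaboration of the one-line argument), and part (2) is the Taylor formula with integral remainder expanded at a boundary point where the jet of $\phi$ vanishes, followed by the standard Stirling optimisation of $\inf_N (nld)^N(N!)^{s-1}$. The only cosmetic difference is that you expand at the nearest point of $\partial V$ and then use $\mathrm{dist}(x,\rea^n\setminus V)\le\mathrm{dist}(x,\partial U)$, whereas the paper expands directly at the nearest point of $\partial U$; both are valid since $\phi$ extends by zero to a global $\ms C^\infty$ function.
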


\emph{Proof}. \emph{(i).} By the Hahn-Banach's extension Theorem the
functional $\int_Uf\cdot\,dx$ extends to $\ms D^{s,l}(\rea^n)$.

\emph{(ii).} Set
$$
 M := \ou
{\begin{subarray}{c} \alpha\in(\integer_{\geq0})^n\\x\in U
  \end{subarray}} {\sup}
\frac{|D^\alpha\phi(x)|}{(\vert\alpha\vert !)^sl^{|\alpha|}},
$$
and let $y$ be a point in $\partial U$ with $\vert x - y \vert =
\mathrm{dist}(x, \partial U)$. Then we have 
$$
\varphi(x) = \frac{1}{(k-1)!}\int_0^1 (1-t)^{k-1}\frac{d^k}{dt^k}\varphi ((x-y)t +y) dt
\qquad \text{for any $k \in \mathbb N$}.
$$
As
$$
\begin{aligned}
\frac{d^k}{dt^k}\varphi ((x-y)t +y) 
&= \left.\left( \sum_{i=1}^n (x_i - y_i)\frac{\partial}{\partial z_i}\right)^k \varphi(z)\right|_{z = (x-y)t+y} \\
&= k! \sum_{\vert \alpha \vert = k} \frac{(x-y)^\alpha}{\alpha !} 
\left. D_z^\alpha \varphi(z)\right|_{z = (x-y)t + y}
\end{aligned}
$$
hold, we obtain 
$$
\vert \varphi(x) \vert \le k(l\vert x-y \vert)^k (k!)^sM \sum_{\vert \alpha \vert = k}\frac{1}{\alpha !}
\le (2nl\vert x -y\vert)^k (k!)^{s-1} M.
$$
Hence we have
$$
\vert \varphi(x) \vert
\le  \inf_{k \ge 1} (2nl\vert x - y \vert)^k(k!)^{s-1} M
= \left(\inf_{k \ge 1} \epsilon^kk!\right)^{s-1} M
$$
where we set $\epsilon := (2nl\vert x-y \vert)^{\frac{1}{s-1}} =
(2nl\operatorname{dist}(x, \partial U))^{\frac{1}{s-1}}$.
We may assume $\epsilon \le 1$. Then, for an integer $j_0 \ge 1$ satisfying 
$ j_0 \le \frac{1}{\epsilon} < j_0+1$,
we get
$$
\begin{aligned}
\inf_{k \ge 1} \epsilon^kk! &\le \epsilon^{j_0} {j_0}! 
\le \left(\frac{1}{j_0}\right)^{j_0} {j_0}!
\le e^{\frac{1}{12}} \sqrt{j_0}e^{-j_0} \\
&\le e^{\frac{13}{12}} \frac{1}{\sqrt{\epsilon}}\exp\left(-\frac{1}{\epsilon}\right)
\le 2e^{\frac{13}{12}} \exp\left(-\frac{1}{2\epsilon}\right)
\end{aligned}
$$
thanks to the Stirling formula 
$$ 
j!=\sqrt{2\pi}j^{j+\frac12}e^{-j + \frac{\theta(j)}{12j}}
\qquad 0 < \theta(j) < 1.
$$
This completes the proof.
\proofend

\emph{Proof of Proposition \ref{prop_komatsu_lemma}.} 
Since the problem is local, we may assume that $U$ is relatively compact.
Lemma \ref{lemma_komatsu_prelemma} (ii) implies that there exists
$C'>0$ and $\kappa > 0$ such that, for any $l > 0$ and for any $\phi\in\ms D^{s,l}(U)$, $$
|f(x)\phi(x)|\leq C'\exp\biggr{\frac{h-\kappa l^{\frac{-1}{s-1}}}{\mrm{dist}(x,\partial
    U)^{\frac1{s-1}}}} 
\vert\vert \varphi \vert\vert_{\overline{U},s,l} \ .$$

If we take $l > 0$ sufficiently small, then
the conclusion follows from Lemma \ref{lemma_komatsu_prelemma} (i).


\proofend

\subsection{An example} \label{section_example}

Let $l\geq1$. Set 
$$ U_1:=\{(x,y)\in\rea^2;\ y>x^{2l+1}\} \ , $$
$$ U_2:=\rea\times\rea_{<0} \ . $$
Def\mbox{}ine a function $u_i(x,y) \in L_{loc}(U_i)$ ($i=1,2$) by
\begin{eqnarray*}
  u_1(x,y)&:=&
  \begin{cases}
    \exp\left(\displaystyle\frac1{y-x^{2l+1}}\right) & \textrm{for } (x,y)\in
    U_1\cap(\rea\times\rea_{>0})\\
0 & \textrm{for } (x,y)\in
    U_1\cap(\rea\times\rea_{\leq0})
  \end{cases}
\\
u_2(x,y)&:=& 0 \quad \textrm{for any } (x,y)\in U_2
\end{eqnarray*}

By Proposition \ref{prop_komatsu_lemma}, $u_1\in\mc
Db^{(2)t}(U_1)$. Clearly $u_2\in\mc Db^{(2)t}(U_2)$. As $u_1|_{U_1\cap
U_2}=u_2|_{U_1\cap U_2}$, there exists $u \in L_{loc}(U_1\cup U_2)$ such
that $u|_{U_1}=u_1$ and $u|_{U_2}=u_2$, but $u\notin\mc
Db^{(2)t}(U_1\cup U_2)$
whose proof will be given later.

It follows that tempered distributions do no glue on f\mbox{}inite coverings
of open sets with smooth boundaries. In particular, the presheaf $\mc
Db^{*t}_X$ is not a sheaf on the subanalytic site relative to $\rea^2$
(see Subsection \ref{SUBSEC_RECALL_XSA} for the def\mbox{}inition of
subanalytic site).

Among the purpose of this paper there is the attempt to overcome the
dif\mbox{}f\mbox{}iculty presented in this example. We will
def\mbox{}ine a subanalytic sheaf of stratif\mbox{}ied Whitney jets of
Gevrey order. Then, we will def\mbox{}ine stratif\mbox{}ied
ultradistributions. In the end, by means of stratif\mbox{}ied
ultradistributions, we will def\mbox{}ine tempered-stratif\mbox{}ied
ultradistributions and we will prove two results. The first states
that, if $X$ is a real surface, tempered-stratif\mbox{}ied
ultradistributions def\mbox{}ine a subanalytic sheaf. The second
states that the sections of tempered-stratif\mbox{}ied
ultradistributions on open subanalytic open sets with $1$-regular
complementary coincides with sections of tempered ultradistributions.

Now, let us prove that $u\notin\mc Db^{(2)t}(U_1\cup U_2)$. 

Set $U := U_1 \cup U_2$ and $D := \{(x,y)\in \rea^2;\, \vert x \vert < 1, \vert y \vert < 1\}$.
Suppose that  $u\in\mc Db^{(2)t}(U)$. Then there
exist positive constants $C$ and $h$ such that
$$
\left| \int_{U} u \psi dx \right| \le C \vert\vert \psi \vert\vert_{\overline{D},2,h}
$$
holds for any $\psi \in \ms D^{(2)}(U \cap D)$.
Now applying Lemma \ref{lemma:partition-of-unity} to
the situation $\varphi(t)= t^2$, $s=2$ and $K=\{0\} \subset \rea^2$,
we obtain a family of functions $\{\chi_\epsilon(x,y)\}_{\epsilon > 0}$ 
satisfying the conditions 1., 2. and 3. of the lemma.
Set
$$
D_\epsilon := \left\{-\frac{5\epsilon}{2} \le x \le -\frac{3\epsilon}{2},\,
0 \le y \le x^{2l+1}\right\}
$$
and
$$
\psi_\epsilon(x,y) := \frac{\chi_\epsilon(x+2\epsilon,y)}{C_h\exp(\varphi(\epsilon^{-1}))}
= \frac{\exp(-\epsilon^{-2})\chi_\epsilon(x+2\epsilon,y)}{C_h}
$$
where $C_h > 0$ is the positive constant given in Lemma
\ref{lemma:partition-of-unity} 1.  Remark that $\vert\vert
\psi_\epsilon \vert\vert_{\overline{D},2,h} \le 1$ holds for any
$\epsilon > 0$, and hence, $\left| \displaystyle\int_{U} u
  \psi_\epsilon dx \right|$ is uniformly bounded.  On the other hand,
for suf\mbox{}f\mbox{}iciently small $\epsilon > 0$, we have
$$
\psi_\epsilon\vert_{D_\epsilon} =
\frac{1}{C_h}\exp\left(-\epsilon^{-2}\right) \ ,
$$
and 
$$
\exp\left(\frac{1}{y-x^{2l+1}}\right)
\ge
\exp\left(\frac{1}{-2x^{2l+1}}\right)
\ge
\exp\left(\kappa \epsilon^{-(2l+1)}\right)
\qquad ((x,y) \in D_\epsilon)
$$
for some positive constant $\kappa > 0$. Therefore we have
$$
\int_{U} u \psi_\epsilon dx \ge
\int_{D_\epsilon} u \psi_\epsilon dx \ge
\frac{1}{C_h}\exp\left(-\epsilon^{-2} +\kappa \epsilon^{-(2l+1)}\right)
\int_{D_\epsilon} dx \to \infty,
$$
which give a contradiction. Hence we conclude that $u\notin\mc Db^{(2)t}(U_1\cup U_2)$.

\section{Stratif\mbox{}ied Whitney jets and stratif\mbox{}ied ultradistributions}\label{section:swj}
\subsection{Review on the subanalytic site}\label{SUBSEC_RECALL_XSA}

Let $X$ be a real analytic manifold countable at inf\mbox{}inity.

\begin{defs}{\label{def:semi-sub-analytic}}
\begin{enumerate}
\item A set $Z\subset X$ is said \emph{semi-analytic at} $x\in X$ if
  the following condition is satisf\mbox{}ied. There exists an open
  neighborhood $W$ of $x$ such that $Z\cap W=\cup_{i\in I}\cap_{j\in
    J}Z_{ij}$ where $I$ and $J$ are f\mbox{}inite sets and either
  $Z_{ij}=\{y\in X;\ f_{ij}(y)>0\}$ or $Z_{ij}=\{y\in X;\
  f_{ij}(y)=0\}$ for some real-valued real analytic functions $f_{ij}$ on $W$. Further, $Z$ is
  said \emph{semi-analytic} if $Z$ is semi-analytic at any $x\in X$.
\item A set $Z\subset X$ is said \emph{subanalytic} if the following
  condition is satisf\mbox{}ied. For any $x\in X$, there exist an open
  neighborhood $W$ of $x$, a real analytic manifold $Y$ and a
  relatively compact semi-analytic set $A\subset X\times Y$ such that
  $\pi(A)=Z\cap W$, where $\pi: X\times Y\to X$ is the projection.
\end{enumerate}
\end{defs}

Given $Z\subset X$, denote by $\mathring Z$ (resp. $\overline
Z$, $\partial Z$) the interior (resp. the closure, the boundary) of $Z$. 

\begin{prop}[See \cite{bm_ihes}]
Let $Z$ and $V$ be subanalytic subset of $X$. Then $Z\cup V$, $Z\cap
V$, $\overline Z$, $\mathring Z$ and $Z\setminus V$ are
subanalytic. Moreover the connected components of $Z$ are subanalytic,
the family of connected components of $Z$ is locally f\mbox{}inite and $Z$ is
locally connected at any point in $Z$.
\end{prop}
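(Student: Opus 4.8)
The statement to be proved is the Proposition attributed to Bierstone–Milman: that the class of subanalytic subsets of a real analytic manifold $X$ is stable under the boolean operations $Z\cup V$, $Z\cap V$, closure, interior and difference, and that the connected components of a subanalytic set are subanalytic, locally finite, and that such a set is locally connected. I would organize the proof as a sequence of reductions, each time pushing the question from subanalytic sets down to the better-behaved class of relatively compact semi-analytic sets, where the corresponding facts are either classical (Łojasiewicz) or follow from elementary manipulation of the defining real analytic functions.

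**Plan for the boolean operations.** First I would fix notation: a set $Z$ is subanalytic precisely when, locally near any $x$, it is the image $\pi(A)$ of a relatively compact semi-analytic set $A\subset X\times Y$ under the projection $\pi\colon X\times Y\to X$. The key elementary observation, which I would state as a lemma, is that the class of relatively compact semi-analytic sets is closed under finite unions, finite intersections, and complements (within a fixed chart), directly from Definition~\ref{def:semi-sub-analytic}(1): the defining conditions $f_{ij}>0$ and $f_{ij}=0$ are closed under these operations after using $\{f=0\}=X\setminus(\{f>0\}\cup\{-f>0\})$ and De Morgan. From this, $Z\cup V$ subanalytic is immediate: if $Z\cap W=\pi(A)$ and $V\cap W=\pi(B)$ with $A,B$ relatively compact semi-analytic in $X\times Y$ (enlarge $Y$ so both live in the same ambient space, or take a disjoint union of the two target manifolds), then $(Z\cup V)\cap W=\pi(A\cup B)$. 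For $Z\cap V$, the naive $\pi(A)\cap\pi(B)$ is not $\pi(A\cap B)$, so instead I would pass to $X\times Y\times Y'$: writing $Z\cap W=\pi_{Y}(A)$, $V\cap W=\pi_{Y'}(B)$, the intersection is $\pi_{Y\times Y'}\big((A\times Y')\cap(X\times \widetilde Y\times\cdots)\big)$ — the point is that the fibred product of the two semi-analytic sets over $X$, intersected with the graph conditions, is again relatively compact semi-analytic, and its projection is $Z\cap V$. The complement/difference $Z\setminus V=Z\cap(X\setminus V)$ is the only genuinely hard operation; I will return to it. Granting it, $\overline Z$ and $\mathring Z$ follow: $\mathring Z$ can be described locally as $X\setminus\overline{X\setminus Z}$, so it suffices to handle closure, and closure of a subanalytic set being subanalytic is exactly the content of the fact that the projection $\pi$ can be taken \emph{proper} (the semi-analytic witness $A$ being relatively compact), so $\overline{\pi(A)}=\pi(\overline A)$ locally, and $\overline A$ is again relatively compact semi-analytic.

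**The main obstacle.** The hard part is stability under complement — equivalently, the statement that the subanalytic class is closed under the operation dual to projection. This is precisely Gabrielov's complement theorem (in the semi-analytic/subanalytic setting), and it is \emph{not} elementary: it is the theorem that makes "subanalytic" a useful category rather than "semi-analytic". In a paper of this kind the honest thing to do is to cite it — indeed the Proposition is quoted "[See~\cite{bm_ihes}]", so I would simply invoke Bierstone–Milman's treatment for the complement statement, and derive everything else from it together with the elementary closure of relatively compact semi-analytic sets under unions, intersections, and properness of the projection, as sketched above.

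**Connected components and local connectedness.** For the last assertions I would argue as follows. Local connectedness at a point $x\in Z$: by subanalyticity $Z\cap W=\pi(A)$ with $A$ relatively compact semi-analytic; semi-analytic sets are locally connected (this is Łojasiewicz's triangulation/stratification of semi-analytic sets), and since $\pi$ restricted to $\overline A$ is proper with connected fibres can be arranged, or more simply since a locally finite subanalytic triangulation of $Z$ near $x$ exists, the simplices containing $x$ in their closure form a connected neighborhood basis. Granting a local subanalytic triangulation (again from \cite{bm_ihes}), the connected components of $Z$ are unions of open simplices, hence subanalytic, and the triangulation being locally finite forces the family of components to be locally finite. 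I would present this paragraph concisely, flagging that all the nontrivial input — complement theorem, triangulation, local finiteness — is packaged in the cited reference, and that the role of this Proposition in the paper is merely to record the closure properties we will use freely when manipulating subanalytic strata in Section~\ref{section:swj}.
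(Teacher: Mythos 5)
The paper gives no proof of this Proposition: it is quoted verbatim from the cited reference \cite{bm_ihes}, and your sketch correctly reproduces the standard argument found there, in particular locating all the genuinely nontrivial content (Gabrielov's theorem of the complement, subanalytic stratification/triangulation, properness of the projection giving $\overline{\pi(A)}=\pi(\overline{A})$ for relatively compact witnesses) in that reference rather than pretending it is elementary. This matches the paper's own treatment, so nothing further is needed.
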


\begin{defs}
  \begin{enumerate}
  \item A family $\{A_\alpha\}_{\alpha\in\Lambda}$ of subanalytic
    subsets of $X$ is said a \emph{stratif\mbox{}ication of $X$} if
    $\{A_\alpha\}_{\alpha\in\Lambda}$ is locally f\mbox{}inite,
    $X=\ou{\alpha\in\Lambda}{\bigsqcup}A_\alpha$ and each $A_\alpha$
    is a locally closed subanalytic manifold.
  \item Given a locally closed subanalytic set $A\subset X$, we say
    that $\{A_\alpha\}_{\alpha\in\Lambda}$ is a \emph{stratif\mbox{}ication of $A$}
    if $A$ is the disjoint union of the $A_\alpha$ and there exists a
    stratif\mbox{}ication $\{A_\alpha\}_{\alpha\in\Lambda\cup\Lambda'}$ of
    $X$ f\mbox{}iner than the stratif\mbox{}ication $\{A,X\setminus A\}$.
  \item Let $A$ be a subanalytic subset of $X$,
    $\{A_\alpha\}_{\alpha\in\Lambda}$ a stratif\mbox{}ication of $A$. Then
    $\{A_\alpha\}_{\alpha\in\Lambda}$ is called a \emph{$1$-regular
      straf\mbox{}ication} if each stratum is $1$-regular, connected and
    relatively compact.
  \end{enumerate}
\end{defs}

\begin{prop}[See \cite{kurdyka}]\label{prop:kurdyka}
  \begin{enumerate}
  \item Let $Z\subset X$ be a locally closed subanalytic subset. There exists a
    $1$-regular stratif\mbox{}ication of $Z$.
  \item Let $U\subset X$ be a subanalytic open set. There exists a
    locally finite open covering $\{U_j\}_{j\in J}$ of $U$ such that,
    for any $j\in J$, $U_j$ is a subanalytic $1$-regular set.
\end{enumerate}
\end{prop}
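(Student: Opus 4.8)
\emph{Plan of proof.} Both statements will be deduced from the theorem of Kurdyka (\cite{kurdyka}) asserting that every relatively compact subanalytic subset of $\rea^n$ can be partitioned into finitely many $L$-regular cells, supplemented by routine patching to pass from this relatively compact, Euclidean situation to a real analytic manifold countable at infinity. Recall that an $L$-regular cell is a connected, relatively compact, subanalytic $C^1$-submanifold described recursively as a graph, or the band between two graphs, of subanalytic functions with uniformly bounded derivatives; the main structural fact we use is that such a cell $C$ is uniformly quasiconvex, and in fact \emph{locally} uniformly quasiconvex near every point of $\overline C$: for a small enough neighbourhood $U$ of $p\in\overline C$ and a sufficiently small compact neighbourhood $K$ of $p$, any two points of $C\cap K$ can be joined inside $C\cap U$ by a subanalytic arc — obtained by concatenating finitely many vertical segments and graph-following arcs from the recursive description of $C$ — of length at most $\kappa\,|x_1-x_2|$, with $\kappa$ depending only on $n$ and the bound on the derivatives. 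Taking $K$ much smaller than $U$ provides the room needed for the arc to remain in $U$. Since for $p\notin\overline C$ the condition of Definition \ref{defs_regular} is vacuous, it follows that every $L$-regular cell is $1$-regular, in the chart in which it is $L$-regular.

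For (1), first fix a locally finite covering $\{\tilde W_i\}_{i\in I}$ of $X$ by domains of real analytic charts, together with relatively compact open sets $W_i$ with $\overline{W_i}\subset\tilde W_i$ still covering $X$; this exists because $X$ is countable at infinity. Working in the coordinates of $\tilde W_i$, apply Kurdyka's theorem to the relatively compact subanalytic sets $Z\cap W_i$ and $W_i\setminus Z$ to get a finite partition of $W_i$ into $L$-regular cells refining $\{Z\cap W_i,\,W_i\setminus Z\}$. The coarsest common refinement of these local partitions is well defined and locally finite, since only finitely many $W_i$ meet a neighbourhood of a given point, and it partitions $X$ into relatively compact subanalytic sets refining $\{Z,\,X\setminus Z\}$. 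Finally re-stratify each piece $P$: its closure is compact and contained in some $\overline{W_i}\subset\tilde W_i$, so Kurdyka's theorem applied in the chart $\tilde W_i$ splits $P$ into finitely many $L$-regular cells, each with closure inside $\tilde W_i$. The totality of these cells is a locally finite partition of $X$ into connected, relatively compact, locally closed subanalytic submanifolds (refine once more if $C^\infty$ or real analytic strata are wanted), each $1$-regular by the previous paragraph — using the chart $\tilde W_i$ whose domain contains its closure — and refining $\{Z,\,X\setminus Z\}$; restricting to $Z$ yields the required $1$-regular stratification.

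For (2), it suffices to cover $U$ locally finitely by relatively compact subanalytic open sets that are $1$-regular. A small open ball in a real analytic chart does the job: it is subanalytic, being the preimage of a semi-analytic set under a real analytic coordinate map; it is relatively compact when its closure lies in the chart; and it is $1$-regular — join points by line segments, so $\kappa=1$, and this survives at boundary points by local convexity. Using a compact exhaustion $X=\bigcup_m K_m$ with $K_m\subset\mathring K_{m+1}$, cover $U\cap(K_{m+1}\setminus\mathring K_m)$ by finitely many such balls contained in $U\cap(\mathring K_{m+2}\setminus K_{m-1})$; the union of these finite families over $m$ is a locally finite covering of $U$ with the required properties. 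The main obstacle is the first paragraph, and specifically the fact that $L$-regularity passes to small neighbourhoods of closure points with a uniform constant — this is the part genuinely borrowed from Kurdyka's work; once it is granted, the patching in (1), keeping track of local finiteness and of placing each cell's closure inside a chart in which it is $L$-regular, and the elementary construction in (2), are routine.
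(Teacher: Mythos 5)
The paper gives no proof of this proposition beyond the citation of \cite{kurdyka}, so the only question is whether your reconstruction from the $L$-regular cell decomposition is sound. For part (1) it is: the key input --- that an $L$-regular cell is uniformly quasiconvex, hence $1$-regular in the sense of Definition~\ref{defs_regular} at every point of its closure once the compact $K$ is taken small relative to the neighbourhood $U$ --- is exactly what is being borrowed from Kurdyka, and your globalization (locally finite atlas, common refinement of the chart-wise decompositions, re-decomposition of each piece inside a single chart containing its closure) is a correct, if tedious, patching argument. Note that the paper's notion of stratification imposes no frontier condition, which is why the common refinement causes no trouble, and that the strata of a $1$-regular stratification are required to be connected and relatively compact, both of which your cells satisfy.

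Part (2), however, has a genuine gap. A covering of $U$ by small coordinate balls contained in $U$ is locally finite \emph{in $U$} but not \emph{in $X$}: at a boundary point $p\in\partial U$ the admissible balls shrink to zero, so every neighbourhood of $p$ in $X$ meets infinitely many of them. Local finiteness in $X$ is what the statement and its use require: for $U$ relatively compact it forces the covering to be finite, and the application through Lemma~\ref{lemma:sheafification} and Proposition~\ref{prop:wj=swj_1-reg} needs, for each $U\in\Op^c(\xsa)$, a covering belonging to $\cov{U}$, which by definition must cover $\overline{U}\cap U=U$ by finitely many of its members. Finitely many balls (or convex sets) cannot achieve this in general: take $U$ to be an open disc minus the closed cusp $\{x\ge 0,\ y^2\le x^3\}$ in $\rea^2$. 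Then $U$ is not $1$-regular at the origin, since the two sides of the cusp are close in $\rea^2$ but can only be joined inside $U$ by going around the cusp tip; and no finite family of balls contained in $U$ covers the points of $U$ lying just above the bounding curve near the origin, because the curvature of that curve blows up and forces the radii of admissible tangent balls to tend to $0$. The finite covering of a bounded subanalytic open set by $1$-regular (quasiconvex, $L$-regular) open subanalytic subsets is precisely the nontrivial content of the cited result; it must be extracted from Kurdyka's decomposition (for instance by attaching to each open cell suitable unions of adjacent cells), not replaced by an elementary ball construction. Once that finite local statement is granted, your globalization scheme from part (1) does carry it to the manifold.
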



For the rest of the subsection we refer to \cite{ks_indsheaves}.

We denote by $\Op(X)$ the family of open subsets of $X$. For $k$ a
commutative ring, we denote by $\Mod(k_X)$ the category of sheaves of
$k$-modules on $X$.

Let us recall the def\mbox{}inition of the subanalytic site $\xsa$
associated to $X$. An element $U\in\Op(X)$ is an open set for $\xsa$
if it is open, relatively compact and subanalytic. The family of open
sets of $\xsa$ is denoted $\opxsac$. For $U\in\opxsac$, a subset $S$
of the family of open subsets of $U$ is said an open covering of $U$
in $\xsa$ if $S\subset\opxsac$ and, for any compact $K$ of $X$, there
exists a f\mbox{}inite subset $S_0\subset S$ such that
$K\cap(\cup_{V\in S_0}V)=K\cap U$. The set of coverings of $U$ in
$\xsa$ is denoted by $\cov U$.

We denote by $\Mod(k_\xsa)$ the category of sheaves of $k$-modules on
the subanalytic site. With the aim of def\mbox{}ining the category
$\Mod(k_\xsa)$, the adjective ``relatively compact'' can be omitted in
the def\mbox{}inition above. Indeed, in \cite[Remark 6.3.6]{ks_indsheaves},
it is proved that $\Mod(k_\xsa)$ is equivalent to the category of
sheaves on the site whose open sets are the open subanalytic subsets
of $X$ and whose coverings are the same as $\xsa$.


Let $\mrm{PSh}(k_\xsa)$ be the category of presheaves of
$k$-modules on $\xsa$.
Denote by $for:\Mod(k_\xsa)\to \mrm{PSh}(k_\xsa)$ the forgetful
functor which associates to a sheaf $F$ on $\xsa$ its underlying
presheaf. It is well known that $for$ admits a left adjoint
$\cdot^a:\mrm{PSh}(k_\xsa)\to \Mod(k_\xsa)$.

For $F\in \mrm{PSh}(k_\xsa)$, let us brief\mbox{}ly
recall the construction of $F^a$. 

For $U\in\opxsac$ and $S\in\cov U$, let $F(S)$ be def\mbox{}ined as the
kernel of the morphism
\begin{eqnarray*}
  \ou{U\in S}{\prod}F(U) & \lra  & \ou{U,V\in S}{\prod}F(U\cap V)\\
\{s_U\}_{U\in S}  &  \longmapsto  &  \{s_U|_{U\cap V}-s_V|_{U\cap V}\}_{U,V\in S} \ .
\end{eqnarray*}

If $S'\in\cov U$ is a ref\mbox{}inement of $S$, then there exists a
natural morphism $F(S)\lra F(S')$.


Now, for $U\in\opxsac$, set
\begin{equation}\label{eq_F+} 
F^+(U):=\underset{S\in\cov U}{\varinjlim} F(S) \ .
\end{equation}
It turns out that $F^a\simeq F^{++}$.

The following Lemma is an immediate consequence of the
def\mbox{}intions above.

\begin{lemma}\label{lemma:sheafification}
  Let $\mc T\subset\Op(\xsa)$ be such that for any $U\in\Op(\xsa)$ there
  exists $S\in\cov U$, such that $S\subset\mc T$. Let
  $F,G\in\mrm{PSh}(k_\xsa)$ and suppose that there exists a morphism
  of presheaves $\phi:F\to G$ such that, for any $V\in\mc T$,
  $\phi_V:F(V)\overset{\sim}{\to} G(V)$. Then $F^a\simeq G^a$.
\end{lemma}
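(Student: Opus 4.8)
The plan is to show that the induced morphism of sheaves $\phi^a\colon F^a\to G^a$ is an isomorphism, by reducing to a vanishing statement and exploiting the explicit description \eqref{eq_F+} of the functor $(\cdot)^+$.

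First I would record the combinatorial consequence of the hypothesis on $\mc T$: for every $U\in\Op(\xsa)$ the coverings $S\in\cov U$ with $S\subset\mc T$ form a cofinal subfamily of $\cov U$. Indeed, given $S_0\in\cov U$, for each $W\in S_0$ one picks $S_W\in\cov W$ with $S_W\subset\mc T$ (possible by assumption); then $\bigcup_{W\in S_0}S_W$ belongs to $\cov U$, is contained in $\mc T$, and refines $S_0$. Running the same construction on a common refinement of two given $\mc T$-coverings shows this subfamily is filtered in $\cov U$, hence genuinely cofinal.

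Next I would prove the auxiliary statement: if $P\in\mrm{PSh}(k_\xsa)$ satisfies $P(V)=0$ for every $V\in\mc T$, then $P^a=0$. This is immediate from the previous step: for $U\in\Op(\xsa)$ and $S\in\cov U$ with $S\subset\mc T$, the module $P(S)$ is a submodule of $\prod_{V\in S}P(V)=0$, hence is zero; since such $S$ are cofinal in $\cov U$, \eqref{eq_F+} forces $P^+(U)=0$. Thus $P^+=0$, and therefore $P^a\simeq P^{++}=0$.

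Finally I would apply this to the presheaf kernel $\mc K$ and presheaf cokernel $\mc C$ of $\phi\colon F\to G$, defined section-wise by $\mc K(U)=\ker\phi_U$ and $\mc C(U)=\mathrm{coker}\,\phi_U$. Since $\phi_V$ is an isomorphism for every $V\in\mc T$, both $\mc K$ and $\mc C$ vanish on $\mc T$, so $\mc K^a=\mc C^a=0$ by the auxiliary statement. Recalling that the sheafification functor $(\cdot)^a$ is exact and that $\Mod(k_\xsa)$ is abelian, one gets $\ker\phi^a\simeq\mc K^a=0$ and $\mathrm{coker}\,\phi^a\simeq\mc C^a=0$, whence $\phi^a\colon F^a\to G^a$ is an isomorphism. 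The only substantive point is the cofinality established in the first step, which is exactly where the defining property of $\mc T$ enters; it is also what circumvents the fact that $\mc T$ need not be stable under finite intersections, and this is precisely why one argues through $\mc K$ and $\mc C$ rather than trying to show directly that $\phi^+$ is an isomorphism on sections, which would require controlling $\phi$ on the intersections $V\cap W$ of members of $\mc T$. Everything else is formal bookkeeping with sheafification.
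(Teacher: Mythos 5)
Your proof is correct. The paper offers no argument for this lemma (it is declared an immediate consequence of the definitions), and your reduction --- cofinality of the $\mc T$-coverings in $\cov U$, hence $P^a=0$ for any presheaf $P$ vanishing on $\mc T$, applied to the presheaf kernel and cokernel of $\phi$ together with the exactness of $(\cdot)^a$ --- is precisely the standard way to make that precise, including the correct observation that one cannot argue directly on $F^+$ because the intersections $V\cap W$ of members of $\mc T$ need not lie in $\mc T$.
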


We denote by
$$ \rho:X\lra\xsa \ ,$$ 
the natural morphism of sites associated to
$\opxsac\lra\Op(X)$. We refer to \cite{ks_indsheaves} for the
def\mbox{}initions of the functors $\rho_*:\Mod(k_X)\lra\Mod(k_\xsa)$
and $\rho^{-1}:\Mod(k_\xsa)\lra\Mod(k_X)$ and for Proposition
\ref{prop_functors} below.
\begin{prop}\label{prop_functors}
\begin{enumerate}
\item The functor $\rho^{-1}$ is left adjoint to $\rho_*$.
\item The functor $\rho^{-1}$ has a left adjoint denoted by
  $\rho_!:\Mod(k_X)\to\Mod(k_\xsa)$.
\item The functors $\rho^{-1}$ and $\rho_!$ are exact, $\rho_*$ is
  exact on constructible sheaves.
\item The functors $\rho_*$ and $\rho_!$ are fully faithful.
\end{enumerate}
\end{prop}

Through $\rho_*$, we will consider $\Mod(k_X)$ as a subcategory of $\Mod(k_\xsa)$.

The functor $\rho_!$ is described as follows. If
$U\in\opxsac$ and $F\in\Mod(k_X)$, then $\rho_!(F)$ is the sheaf on $\xsa$
associated to the presheaf $U\mapsto F\bigr{\overline U}$.

\subsection{Stratif\mbox{}ied Whitney jets}



Let $A$ be a locally closed subanalytic subset
in a real analytic manifold $X$ and $U \subset X$ an open subset.
Let us def\mbox{}ine the sheaf $\Sswj$ of stratif\mbox{}ied Whitney
jets of class $*$ over $A$. 
\begin{defs}{\label{def:stratified-Whitney-jets}}
  \begin{enumerate}
  \item We say that $F \in {\mathcal J}_A(U)$ is a stratif\mbox{}ied Whitney
    jet of class $*$ over $A$ in $U$ if for any compact subanalytic
    set $K$ in $U$ there exists a subanalytic stratif\mbox{}ication
    $\{A_\alpha\}_{\alpha \in \Lambda}$ of $A$ such that $j_{A_\alpha
      \cap K,A}(F) \in {\mathcal W}^*_{A_\alpha \cap K}(U)$ holds for
    any $\alpha \in \Lambda$.
  \item We denote by $\Sswj(U)$ the set of stratified Whitney jets of class $*$ over $A$ in $U$.
  \end{enumerate}
\end{defs}
\noindent

It is easy to verify that $\Sswj$ is a sheaf on $X$.

\noindent
\begin{exa}

Later we will prove that, if the set $A$ is 1-regular, then
    $\Swj = \Sswj$. However, in general, $\Swj$ and $\Sswj$ are different. For
    example, let $m \ge 2$, $X = \mathbb R^2$ with coordinates $(x,y)$ and
$$
B = \{(x,y) \in \mathbb R^2;\, y=0,\, x \ge 0\}\ ,\qquad
B_m = \{(x,y) \in \mathbb R^2;\, y=x^m,\, x\ge 0\}\ .
$$
Set $A = B \cup B_m$. We def\mbox{}ine the jet $F = \{f_\alpha\} \in
{\mathcal J}_A(X)$ by:
$$
f_\alpha(x,y) =
\left\{
\begin{array}{ll}
0 \qquad & (x,y) \in B_m \\
\\
\displaystyle\frac{\partial^\alpha}{\partial x^\alpha} 
\exp \left(-\displaystyle\frac{1}{x}\right)\qquad
& (x,y) \in B
\end{array}
\right. \ .
$$
Then $F \in {\mathcal S}{\mathcal W}^{\{2\}}_A(X)$, but
$F \notin {\mathcal W}^{\{2\}}_A(X)$. 
As a matter of fact, if $F \in {\mathcal W}^{\{2\}}_A(X)$, then we can find
$\tilde{F} = \{\tilde{f}_\alpha\}_{\alpha} \in {\mathcal S}{\mathcal W}^{\{2\}}_{D}(X)$ with 
$j_{\partial D, D}(\tilde{F}) = F$ where 
\begin{equation}{\label{eq:whitney-example-1}}
D=\{(x,y)\in\rea^2;\,0 \le x,\, 0 \le y \le x^m\}\ .
\end{equation}
By applying Lemma \ref{prop:decay-estimate} to $\tilde{F}$ and $D$ with the 1-regular stratification
$\{D\setminus \partial D,\, B\setminus\{0\},\, B_m\setminus\{0\},\, \{0\}\}$, we have
constants $C,l>0$ satisfying
$$
\vert f_0(x,0) \vert = \vert \tilde{f}_0(x,0) \vert \le
C\exp\left(-\frac{l}{x^m}\right)\qquad (x > 0),
$$
which is impossible.

We can also give the similar example on an open subanalytic set: Set $U := X \setminus D$ 
($D$ was given by {\eqref{eq:whitney-example-1}}),
and define the jet $G = \{g_\alpha\} \in {\mathcal J}_{U}(X)$ by
$$
g_\alpha(x,y) =
\left\{
\begin{array}{ll}
0 \qquad & (x,y) \in U \setminus \{x > 0,\, y < 0\} \\
\\
\displaystyle\frac{\partial^\alpha}{\partial x^\alpha} 
\exp \left(-\displaystyle\frac{1}{x}\right)\qquad
& (x,y) \in \{x > 0, \ y < 0\}
\end{array}
\right. \ .
$$
Then $G \in {\mathcal S}{\mathcal W}^{\{2\}}_{U}(X)$, but
$G \notin {\mathcal W}^{\{2\}}_{U}(X)$. The reason is the same
as that for the first example.
\end{exa}


\begin{rem}
\begin{enumerate}
\item
We cannot expect $ {\mathcal S}{\mathcal W}^*_{\overline{A}}(X) =
  {\mathcal S}{\mathcal W}^*_A(X) $ on the contrary to 
${\mathcal W}^*_{\overline{A}}(X) =
{\mathcal W}^*_{A}(X)$.  
For example, consider the
  case $X=\mathbb R^2$ with coordinates $(x,y)$ and $A=X\setminus
  \{x=0\}$.
\item We have the equivalence, for every $F \in {\mathcal J}_A(U)$, 
$$
j_{A_\alpha \cap K, A}(F) \in {\mathcal W}^*_{A_\alpha \cap K}(U)
\iff
j_{\operatorname{clos}_A({A_\alpha \cap K}),\, A}(F) 
\in {\mathcal W}^*_{\operatorname{clos}_A({A_\alpha \cap K})}(U)
$$
where $\operatorname{clos}_A(B)$ denotes the closure of the set $B$ in
$A$. Hence, in Definition {\ref{def:stratified-Whitney-jets}},
the condition 
$$
\text{`` $j_{A_\alpha \cap K,A}(F) \in {\mathcal W}^*_{A_\alpha \cap K}(U)$ 
for any $\alpha \in \Lambda$''}
$$
can be replaced with 
$$
\text{`` $j_{\operatorname{clos}_A{(A_\alpha \cap K)}, A}(F) \in 
{\mathcal W}^*_{\operatorname{clos}_A{(A_\alpha \cap K)}}(U)$ for any $\alpha \in \Lambda$''}.
$$

In particular, if $A$ is a compact subanalytic subset in $X$, then $F
\in \Sswj(X)$ if and only if there exists a stratif\mbox{}ication
$\{A_\alpha\}_{\alpha \in \Lambda}$ such that $j_{\overline{A}_\alpha,
  A}(F) \in {\mathcal W}^*_{\overline{A}_\alpha}(X)$.
\end{enumerate}
\end{rem}

The sheaf $\Swj$ is a subsheaf of $\Sswj$, and $\Sswj$ is a sheaf of
rings and a ${\mathcal D}_X$-module.  Further, $\Swj$ and $\Sswj$ are
$\ms C^*$-modules and soft sheaves.  If $\{A_\alpha\}_{\alpha \in
  \Lambda}$ is a stratif\mbox{}ication of $A$, then we denote by
$\Ssawj$ the subsheaf of $\Sswj$ def\mbox{}ined by:
$$
\Ssawj(U) = \bigc{F \in {\mathcal J}_A(U);\, 
j_{A_\alpha,A}(F) \in {\mathcal W}^*_{A_\alpha}(U)
\,\text{ for any } \alpha \in \Lambda} \ .
$$


\begin{lemma}{\label{lemma:stratified-Whitney-exact}}
  Let $A$ be a locally closed subanalytic subset in $X$, and let
  $\{A_i\}_{i=1}^l$ be a f\mbox{}inite family of locally closed
  subanalytic subset in $X$ with $A = \cup A_i$. We assume that every
  $A_i$ is a closed subset in $A$, or that every $A_i$ is open in $A$.
  Then the sequence of sheaves
\begin{equation}{\label{eq:exact-Whitney}}
0 \to {\mathcal S}{\mathcal W}^*_{A} \to
\underset{1 \le i \le k} \oplus {\mathcal S}{\mathcal W}^*_{A_i}
\to
\underset{1 \le i < j \le k} \oplus {\mathcal S}{\mathcal W}^*_{A_i \cap A_j}
\end{equation}
is exact.
\end{lemma}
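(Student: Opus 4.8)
The plan is to reduce the exactness of the sequence of sheaves to a pointwise statement about stalks, or better, since everything in sight is defined via sections over relatively compact subanalytic opens, to check exactness of sections over each open $U$. Exactness at the first spot ${\mathcal S}{\mathcal W}^*_A \to \oplus_i {\mathcal S}{\mathcal W}^*_{A_i}$ is the easy part: a jet $F \in {\mathcal J}_A(U)$ is determined by its restrictions $j_{A_i \cap U, A}(F)$ because $A = \cup A_i$, so the map is injective; and if each $j_{A_i \cap K, A}(F)$ lies in ${\mathcal W}^*_{A_i \cap K}(U)$ for a common stratification (or, after refining, a common one), then one shows $F \in {\mathcal S}{\mathcal W}^*_A(U)$ by taking a stratification of $A$ subordinate to the partition $\{A_i\}$ — here is where the hypothesis that the $A_i$ are all closed in $A$, or all open in $A$, enters, since in either case $\{A_i\}$ is a "locally finite-ish" family compatible with stratifications of $A$.

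The heart of the matter is exactness in the middle: given a family $(F_i)_i$ with $F_i \in {\mathcal S}{\mathcal W}^*_{A_i}(U)$ and $j_{A_i \cap A_j, A_i}(F_i) = j_{A_i \cap A_j, A_j}(F_j)$ for all $i<j$, I must produce $F \in {\mathcal S}{\mathcal W}^*_A(U)$ restricting to each $F_i$. The compatibility conditions say the $F_i$ agree as continuous jets on overlaps, so there is a unique $F \in {\mathcal J}_A(U)$ with $j_{A_i \cap U, A}(F) = F_i$; the only issue is that $F$ be \emph{stratified} Whitney of class $*$. Fix a compact subanalytic $K \subset U$. For each $i$, choose a subanalytic stratification $\{A_{i,\beta}\}_\beta$ of $A_i$ witnessing $F_i \in {\mathcal S}{\mathcal W}^*_{A_i}(U)$ on $K$, i.e. $j_{A_{i,\beta}\cap K, A_i}(F_i) \in {\mathcal W}^*_{A_{i,\beta}\cap K}(U)$. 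Now I take a common subanalytic refinement $\{A_\gamma\}_\gamma$ of $A$ that is finer than all the $\{A_{i,\beta}\}_\beta$ and than the partition $\{A_i\}$ (this exists by the standard existence of compatible stratifications, using that $\{A_i\}$ is finite and each $A_i$ is closed in $A$ — or all open, the symmetric case). Each stratum $A_\gamma$ lands inside some single $A_i$ and inside some $A_{i,\beta}$, whence $j_{A_\gamma \cap K, A}(F)$ is a restriction of $j_{A_{i,\beta}\cap K, A_i}(F_i) \in {\mathcal W}^*_{A_{i,\beta}\cap K}(U)$, and restriction of a Whitney jet of class $*$ to a subset stays Whitney of class $*$ (this uses Theorem \ref{thm:w_norm_charact}: the seminorm estimates $\|\cdot\|_{s,h}$ and $\|\cdot\|^{\mathcal W}_{s,h}$ only get smaller on a smaller set). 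Hence $j_{A_\gamma \cap K, A}(F) \in {\mathcal W}^*_{A_\gamma \cap K}(U)$ for every $\gamma$, so $F \in {\mathcal S}{\mathcal W}^*_A(U)$.

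The main obstacle is the construction of the common subanalytic refinement that is simultaneously compatible with all the witnessing stratifications $\{A_{i,\beta}\}$ and with the original decomposition $\{A_i\}$: one needs that refining a subanalytic stratification keeps each stratum a locally closed subanalytic manifold, and that the intersection-closure operations needed to pass to a common refinement do not destroy subanalyticity — both are standard (Proposition from \cite{bm_ihes}), but the bookkeeping must be done carefully, and the two cases (all $A_i$ closed in $A$ / all $A_i$ open in $A$) should be checked to see that the stratification axioms (each $A_\gamma$ locally closed in $X$, the family finer than $\{A, X\setminus A\}$) survive. A secondary, more delicate point is the claim that restriction of a class-$*$ Whitney jet to a locally closed subanalytic subset is again a class-$*$ Whitney jet: I would prove this via the norm characterization of Theorem \ref{thm:w_norm_charact} rather than by directly producing an extending $\ms C^*$ function, since the seminorms are manifestly monotone under shrinking the set. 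Finally, note the statement's index set is written inconsistently ($l$ versus $k$); I would fix this to a single letter $l$ throughout, and remark that exactness at the third term is not asserted — the sequence is only claimed exact at the first two spots, which is all we prove.
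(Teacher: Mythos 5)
Your proposal is correct and follows essentially the same route as the paper: glue the compatible jets using the exactness of the jet sequence (which holds under the all-closed or all-open hypothesis), then for each compact $K$ take a common subanalytic refinement of the witnessing stratifications of the $A_i$ compatible with the partition $\{A_i\}$, and observe that each stratum lies in a single witnessing stratum so that the restricted jet remains of class $*$. The only cosmetic difference is that you justify the restriction step via the seminorm characterization of Theorem \ref{thm:w_norm_charact}, whereas it also follows directly from the definition ${\mathcal W}^*_B = j_B({\ms C}^*)$, since $j_{B',B}\circ j_B = j_{B'}$.
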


\begin{proof}
  The injectivity of the second morphism of (\ref{eq:exact-Whitney})
  is clear.

Under the condition that every $A_i$ is closed (or open) in $A$, the
sequence of sheaves of jets
$$
0 \to {\mathcal J}_{A} \to
\underset{1 \le i \le k} \oplus {\mathcal J}_{A_i}
\to
\underset{1 \le i < j \le k} \oplus {\mathcal J}_{A_i \cap A_j}
$$
is exact. Let $U$ be an open subset and $K$ a compact subanalytic
subset in $U$, and let $F_i \in {\mathcal S}{\mathcal W}^*_{A_i}(U)$
with $j_{A_i \cap A_j, A_i}(F_i) = j_{A_i \cap A_j, A_j}(F_j)$.  Then
by the above exact sequence we can f\mbox{}ind a jet $F \in {\mathcal
  J}_A(U)$ with $j_{A_i, A}(F) = F_i$. To conclude the proof let us
show that $F \in {\mathcal S}{\mathcal W}^*_{A}(U)$.

As $F_i\in\Sswj_{A_i}(U)$ there exists a stratif\mbox{}ication
$\{A^{(i)}_\alpha\}_{\alpha}$ of $A_i$ such that $j_{A^{(i)}_\alpha
  \cap K, A_i} (F_i) \in {\mathcal W}^*_{A^{(i)}_\alpha \cap K}(U)$.
If we take a stratif\mbox{}ication $\{A_\alpha\}$ of $A$ f\mbox{}iner
than any partition $\{\{A^{(i)}_\alpha\}_\alpha,\, A \setminus A_i\}$
of $A$, then we have $j_{A_\alpha \cap K, A_i}(F_i) \in {\mathcal
  W}^*_{A_\alpha \cap K}(U)$ for any stratum $A_\alpha$ with $A_\alpha
\subset A_i$. Hence, for each $A_\alpha$ we conclude
$$
j_{A_\alpha \cap K, A}(F) =
j_{A_\alpha \cap K, A_i}(F_i) \in {\mathcal W}^*_{A_\alpha \cap K}(U)
$$
where the index $i$ is taken so that $A_\alpha \subset A_i$.
\end{proof}

Remark that, in general, the sequence
$$
0 \to {\mathcal W}^*_{A} \to
\underset{1 \le i \le k} \oplus {\mathcal W}^*_{A_i}
\to
\underset{1 \le i < j \le k} \oplus {\mathcal W}^*_{A_i \cap A_j}
$$
is not exact.

The following lemma is fundamental.

\begin{lemma}{\label{lemma:Whitney-curve}}
Let $X=\mathbb R^n$ and $A$ a locally closed subanalytic subset in $X$.
For any $F=\{f_\alpha\} \in \Sswj(X)$ and
any subanalytic curve $l\subset A$ joining  $x,x'\in A$, we have
\begin{equation}{\label{eq:1-regular-estimate}}
\vert R_m(F;\,x,\,x') \vert \le 
\frac{(\sqrt{n}\vert l \vert)^{m+1}}{m!} 
\sup_{\begin{subarray}{c}\vert \alpha\vert=m+1\\y \in l\end{subarray}} \vert f_\alpha(y) \vert
\end{equation}
where $\vert l \vert$ denotes the length of the curve $l$.
\end{lemma}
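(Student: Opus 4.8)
The plan is to realize $F$ stratum by stratum by genuine smooth functions and then invoke the classical integral form of Taylor's remainder along a $C^1$ reparametrization of $l$. We may assume $|l|<\infty$, otherwise there is nothing to prove, and then $l$ is relatively compact. First I would fix a compact subanalytic $K\subset X$ with $l\subset K$. By Definition \ref{def:stratified-Whitney-jets} there is a subanalytic stratification $\{A_\alpha\}_{\alpha\in\Lambda}$ of $A$ with $j_{A_\alpha\cap K,A}(F)\in\mathcal W^*_{A_\alpha\cap K}(X)$ for every $\alpha$, and by the partition‑of‑unity characterization of $\mathcal W^*$ recalled above, each such jet equals $j_{A_\alpha\cap K}(g_\alpha)$ for some $g_\alpha\in\ms C^*(X)$; thus $D^\beta g_\alpha=f_\beta$ on $A_\alpha\cap K$ for all $\beta\in(\integer_{\ge0})^n$. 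Since $\{A_\alpha\}$ is locally finite and $l$ is relatively compact, $l$ meets only finitely many strata; cutting $l$ along these finitely many subanalytic pieces and using the subanalytic structure theorem to split each piece into $C^1$ arcs and finitely many points, I obtain a unit‑speed parametrization $\gamma\colon[0,|l|]\to l$ with $\gamma(0)=x'$, $\gamma(|l|)=x$, and a subdivision $0=t_0<t_1<\dots<t_N=|l|$ such that on each closed subinterval $[t_{i-1},t_i]$ the map $\gamma$ is $C^1$ with image contained in a single stratum $A_{\alpha_i}$.

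Next I would introduce
$$\Phi(t):=\sum_{|\beta|\le m}\frac1{\beta!}\,f_\beta(\gamma(t))\,(x-\gamma(t))^\beta,\qquad t\in[0,|l|].$$
This is continuous on $[0,|l|]$, since $\gamma$ is continuous with values in $A$ and the $f_\beta$ are continuous on $A$; moreover $\Phi(|l|)=f_0(x)$ and $\Phi(0)=T_m(F;x,x')$, so $R_m(F;x,x')=\Phi(|l|)-\Phi(0)$. On each interval $(t_{i-1},t_i)$ we have $f_\beta\circ\gamma=(D^\beta g_{\alpha_i})\circ\gamma$, so $\Phi$ is $C^1$ there, and the standard telescoping computation for the Taylor polynomial of $g_{\alpha_i}$ (all but the top‑order terms cancel) gives
$$\Phi'(t)=\sum_{|\beta|=m}\frac1{\beta!}\,(x-\gamma(t))^\beta\sum_{j=1}^n f_{\beta+e_j}(\gamma(t))\,\gamma_j'(t),$$
where $e_j$ is the $j$‑th standard basis vector. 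As $\Phi$ is continuous on $[0,|l|]$ and $C^1$ off the finite set $\{t_1,\dots,t_{N-1}\}$, it follows that $R_m(F;x,x')=\int_0^{|l|}\Phi'(t)\,dt$.

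To conclude I would estimate the integrand pointwise. By Cauchy--Schwarz and $|\gamma'(t)|=1$ a.e., $\bigl|\sum_{j}f_{\beta+e_j}(\gamma(t))\gamma_j'(t)\bigr|\le\left(\sum_{j}f_{\beta+e_j}(\gamma(t))^2\right)^{1/2}\le\sqrt n\,\sup_{|\alpha|=m+1,\,y\in l}|f_\alpha(y)|$; by the multinomial identity, $\sum_{|\beta|=m}\frac1{\beta!}\,|(x-\gamma(t))^\beta|=\frac1{m!}\left(\sum_k|x_k-\gamma_k(t)|\right)^m\le\frac{(\sqrt n)^m}{m!}\,|x-\gamma(t)|^m$; and $|x-\gamma(t)|=|\gamma(|l|)-\gamma(t)|\le\int_t^{|l|}|\gamma'(\tau)|\,d\tau\le|l|$. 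Multiplying these bounds gives $|\Phi'(t)|\le\frac{(\sqrt n)^{m+1}}{m!}\,|l|^m\,\sup_{|\alpha|=m+1,\,y\in l}|f_\alpha(y)|$, and integrating over $[0,|l|]$ yields exactly \eqref{eq:1-regular-estimate}. The only genuinely delicate step is the first paragraph: one must extract, from the mere existence of an adapted stratification, a decomposition of $l$ into finitely many $C^1$ arcs, each lying in a single stratum on which $F$ is realized by a smooth function, so that $\Phi$ is piecewise $C^1$ with the breakpoints contributing nothing to the integral; everything after that is the classical Taylor estimate along a rectifiable path.
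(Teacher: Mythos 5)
Your proof is correct, but it takes a genuinely different route from the paper's. The paper argues discretely: it telescopes $T_m(F;x,x')-f_0(x)$ over a fine chain of points $x=x_0,\dots,x_k=x'$ on $l$ chosen so that consecutive points lie in the closure of a common stratum, applies Whitney's identity for $T_m(F;x_1,x_2)-T_m(F;x_1,x_3)$, splits each increment into a top-order part (which Riemann-sums to the bound $\frac{(\sqrt n|l|)^{m+1}}{m!}\sup|f_\alpha|$) and a remainder part, and kills the remainder in the mesh$\to0$ limit using the finiteness of the seminorms $\vert\vert\cdot\vert\vert^{\mathcal W}$ on each $\overline{A}_\tau$ guaranteed by Theorem \ref{thm:w_norm_charact}. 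You instead realize $F$ on each stratum (cut by a compact $K\supset l$) by an actual function $g_\alpha\in\ms C^*(X)$, differentiate $\Phi(t)=T_m(F;x,\gamma(t))$ exactly along a piecewise-$C^1$ arc-length parametrization, and integrate; the second-order remainder control that the paper extracts from the $\vert\vert\cdot\vert\vert^{\mathcal W}$ seminorms is replaced by the exact cancellation in the chain rule for the smooth realizations. Both arguments hinge on the same structural fact — $l$ meets only finitely many strata and each $l\cap A_\alpha$ has finitely many components, so the curve splits into finitely many adapted subarcs — which you rightly flag as the delicate point; your version of it is fine, except that your claim that each \emph{closed} subinterval $[t_{i-1},t_i]$ maps into a single stratum is generally false (the endpoints may fall into lower-dimensional strata). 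This is harmless, since your computation of $\Phi'$ only uses the open intervals and the fundamental theorem of calculus only needs continuity of $\Phi$ on $[0,|l|]$ together with the uniform bound on $\Phi'$ off the finitely many breakpoints; you may want to state it that way. What your approach buys is independence from Theorem \ref{thm:w_norm_charact} and a cleaner constant-tracking; what the paper's buys is that it needs no differentiability of the curve at all, only rectifiability and the ability to sample consecutive points in a common stratum closure.
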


\begin{proof}
  We recall the following formula of \cite{whitney_4pages}. Let
  $l\subset A$ be a subanalytic curve, for any $x_1,x_2,x_3\in l$
$$
T_m(F;x_1,x_2) - T_m(F;x_1,x_3) =
\sum_{\vert\beta\vert \le m}
\frac{R_{m-\vert \beta\vert}(S_\beta F; x_2, x_3)}{\beta!}(x_1 - x_2)^\beta \ .
$$
Noticing that
$$
R_{m - \vert \beta \vert} (S_\beta F; x_2, x_3) 
= \sum_{\vert \gamma \vert = m - \vert \beta \vert + 1}
\frac{f_{\beta+\gamma}(x_3)}{\gamma!}(x_2 - x_3)^\gamma 
+ R_{m - \vert \beta \vert + 1} (S_\beta F; x_2, x_3) \ ,
$$
we have
$$
\begin{aligned}
\vert T_m(F;x_1,x_2) - T_m(F;x_1,x_3) \vert &\le
\sum_{\vert\beta\vert \le m}
\sum_{\vert \gamma \vert = m - \vert \beta \vert + 1}
\left|
\frac{f_{\beta+\gamma}(x_3)}{\beta!\gamma!}(x_1 - x_2)^\beta(x_2 - x_3)^\gamma 
\right| \\
&+
\sum_{\vert\beta\vert \le m}\left|
\frac{R_{m-\vert \beta\vert+1}(S_\beta F; x_2, x_3)}{\beta!}(x_1 - x_2)^\beta\right|.
\end{aligned}
$$
The f\mbox{}irst term is estimated as follows. Suppose that
$x_1,x_2,x_3$ are in a sequential order along $l$, then we have
$$
\begin{aligned}
&\sum_{\vert\beta\vert \le m}
\sum_{\vert \gamma \vert = m - \vert \beta \vert + 1}
\left|
\frac{f_{\beta+\gamma}(x_3)}{\beta!\gamma!}(x_1 - x_2)^\beta(x_2 - x_3)^\gamma 
\right| \\
&\le
\left(\sup_{y \in l, \vert\alpha\vert = m+1} \vert f_\alpha(y)\vert\right)
\left(\sum_{k=1}^n \vert (x_2 - x_3)_k \vert\right)
\sum_{\vert\beta+\gamma \vert = m}
\left| \frac{1}{\beta!\gamma!}(x_1 - x_2)^\beta(x_2 - x_3)^\gamma 
\right| \\
&\le
\left(\sup_{y \in l, \vert\alpha\vert = m+1} \vert f_\alpha(y)\vert\right)
\sqrt{n}\vert x_2 - x_3 \vert \frac{(\sqrt{n})^m}{m!}
(\vert x_1 - x_2 \vert + \vert x_2 - x_3 \vert)^m \\
&\le
\frac{(\sqrt{n})^{m+1}\vert l \vert^m}{m!}\left(\sup_{y \in l, \vert\alpha\vert = m+1} 
\vert f_\alpha(y)\vert\right)\vert x_2 - x_3 \vert \ .
\end{aligned}
$$
Since $l$ is compact, we may assume that $A$ is compact.
Then, by the
def\mbox{}inition of $\Sswj$, there exists a stratif\mbox{}ication
$\{A_\tau\}_\tau$ of $A$ such that for any $\tau$ we have
$j_{\overline{A}_\tau, A}(F) \in {\mathcal W}^*_{\overline{A}_\tau}(X)$.  It
follows from Theorem \ref{thm:w_norm_charact} that, for any $\tau$,
there exists a constant $C_\tau$ such that
$$
\vert R_{m-\vert \beta \vert + 1}(S_\beta F;x_1, x_2)  \vert
\le C_\tau \vert x_1 - x_2 \vert^{m-\vert \beta \vert +2}
$$
holds for any $0 \le \vert \beta \vert \le m$ and $x_1$ $x_2 \in \overline{A}_\tau$.
As the number of strata is f\mbox{}inite, it makes sense to set
$$
C := \max_{\tau} C_\tau \ .
$$
Remark that the constant $C$ depends on $l$, $m$, $F$ and $A$ and it
does not depend on $x_1$ and $x_2$.  Now, the second term is
estimated in the following way. If $x_2,x_3\in l$ belong to the
closure of a same stratum, then
$$
\sum_{\vert\beta\vert \le m}\left|
\frac{R_{m-\vert \beta\vert+1}(S_\beta F; x_2, x_3)}{\beta!}(x_1 - x_2)^\beta\right| 
\le  
C \sum_{\vert\beta\vert \le m} \vert x_2 - x_3 \vert^{m-\vert \beta\vert + 2}
\frac{\vert x_1 - x_2 \vert^{\vert\beta\vert}}{\beta!} $$
\vspace{-7mm}
\begin{eqnarray*}
\phantom{aasdfasdfasdfadsfsldkfjasd}&\le &
C \vert x_2 - x_3 \vert^2 
\sum_{\vert\beta\vert \le m} \vert x_2 - x_3 \vert^{m-\vert \beta\vert}
\frac{\vert x_1 - x_2 \vert^{\vert\beta\vert}}{\beta!} \\
&\le &
C \vert x_2 - x_3 \vert^2 \vert l \vert^m \sum \frac{1}{\beta!}
\\
&\le & e^nC\vert l \vert^m \vert x_2 - x_3 \vert^2 \ .
\end{eqnarray*}

Now we take points $x=x_0,x_1,\dots,x_k=x'$ sequentially in the curve $l$
so that each pair $x_i$ and $x_{i+1}$ belong 
to the closure of a same stratum ($0\le i \le k-1$).
Then
$$
\begin{aligned}
&\vert f_0(x) - T_m(F;x,x')\vert \\
&\qquad \le\vert T_m(F;x_0,x_0) - T_m(F;x_0,x_1) \vert +
\vert T_m(F;x_0,x_1) - T_m(F;x_0,x_2) \vert \\
&\qquad\qquad + \dots + \vert T_m(F;x_0,x_{k-1}) - T_m(F;x_0,x_k) \vert \\
&\qquad\le
\frac{(\sqrt{n})^{m+1}\vert l \vert^m}{m!}\left(\sup_{y \in l, \vert\alpha\vert = m+1} 
\vert f_\alpha(y)\vert\right) \sum_{i=0}^{k-1}\vert x_i - x_{i+1} \vert \\
&\qquad\qquad + e^nC\vert l \vert^m \sup_{0\le i \le k-1} \vert x_i - x_{i+1} \vert 
\sum_{i=0}^{k-1} \vert x_i - x_{i+1} \vert.
\end{aligned}
$$
When $k$ tends to $\infty$, then the f\mbox{}irst term in the right hand side converges to
$$
\frac{(\sqrt{n}\vert l \vert)^{m+1}}{m!}
\left(\sup_{y \in l, \vert\alpha\vert = m+1} 
\vert f_\alpha(y)\vert\right)
$$
and the second term tends to $0$. 

The conclusion follows.
\end{proof}


\begin{coro}{\label{coro:1-regular-basic}}
  Let $X=\mathbb R^n$ and $A$ a locally closed 1-regular subanalytic
  subset in $X$.  Then for any subanalytic open set $V\subset X$ and any
  compact subanalytic set $K\subset V$ there exists $\kappa >
  0$ such that, for any $h > 0$, there exists $C_h>0$ satisfying
$$
\vert\vert F \vert\vert^{\mathcal W}_{K \cap A,\, s,\,\kappa h} 
\le C_h \vert\vert F \vert\vert_{\overline{V} \cap A,\,s,\,h} 
$$
for any $F \in \Sswj(X)$.
\end{coro}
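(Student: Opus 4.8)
The plan is to bound, uniformly over $m\ge 0$, over $\alpha\in(\integer_{\ge0})^n$, and over $x\neq x'$ in $K\cap A$, the quantity
\[
Q_{m,\alpha}(x,x'):=\frac{m!}{(|\alpha|+m+1)!^s(\kappa h)^{|\alpha|+m+1}}\cdot\frac{|R_m(S_\alpha F;\,x,\,x')|}{|x-x'|^{m+1}}
\]
by a constant (depending on $h$, not on $F$) times $N:=\vert\vert F\vert\vert_{\overline V\cap A,\,s,\,h}$; if $N=\infty$ there is nothing to prove. Since $\Sswj$ is a $\D_X$-module and $S_\alpha$ is the action of $D^\alpha$ on jets, $S_\alpha F\in\Sswj(X)$ for every $\alpha$, so Lemma \ref{lemma:Whitney-curve} applies to each $S_\alpha F$. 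I would split the estimate using a threshold $\delta_0>0$ (produced below) into the \emph{nearby} pairs ($|x-x'|<\delta_0$) and the \emph{distant} pairs ($|x-x'|\ge\delta_0$).

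For the nearby pairs I would first produce connecting curves from $1$-regularity. For each $p\in K$, Definition \ref{defs_regular} provides — after shrinking the neighbourhood there so that it lies in $V$ and working in the Euclidean metric rather than in the chart of Definition \ref{defs_regular} (which distorts lengths only by a bounded factor) — a compact neighbourhood $K_p\subset V$ of $p$ and a constant $\kappa_p>0$ such that any two points of $A\cap K_p$ are joined by a subanalytic curve contained in $A\cap V$ of length at most $\kappa_p$ times their distance. Covering $K$ by finitely many of the interiors $\mathring K_{p_i}$, I would let $\delta_0$ be a Lebesgue number of this covering and $\kappa_1:=\max_i\kappa_{p_i}$; then for $x,x'\in K\cap A$ with $|x-x'|<\delta_0$ there is a subanalytic curve $l\subset A\cap V\subset A\cap\overline V$ joining them with $|l|\le\kappa_1|x-x'|$. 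Feeding $l$ and $S_\alpha F$ into Lemma \ref{lemma:Whitney-curve} and bounding $|f_{\alpha+\gamma}(y)|\le N\,(|\alpha|+m+1)!^s h^{|\alpha|+m+1}$ for $y\in l$ and $|\gamma|=m+1$, one gets $Q_{m,\alpha}(x,x')\le N\,(\sqrt n\,\kappa_1/\kappa)^{m+1}\kappa^{-|\alpha|}\le N$ as soon as $\kappa\ge\max(\sqrt n\,\kappa_1,1)$.

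For the distant pairs no curve inside $A\cap\overline V$ need exist (e.g.\ $x,x'$ could lie in distinct connected components of $A$), so there I would use only the triangle inequality applied to $R_m(S_\alpha F;x,x')=f_\alpha(x)-\sum_{|\gamma|\le m}\frac1{\gamma!}f_{\alpha+\gamma}(x')(x-x')^\gamma$ together with the pointwise Gevrey bounds on $\overline V\cap A\supset\{x,x'\}$. Dividing by $|x-x'|^{m+1}$ (every surviving power of $|x-x'|$ then has negative exponent and is controlled by $|x-x'|\ge\delta_0$), grouping the $\gamma$'s with $|\gamma|=j$ (a factor $n^j/j!$), inserting the prefactor and using $(|\alpha|+m+1)!\ge|\alpha|!(m+1)!$ and $(|\alpha|+j)!\le 2^{|\alpha|+j}|\alpha|!\,j!$ — so the powers of $2^s$ are absorbed once $\kappa\ge 2^s$ — one is left with $N$ times a sum of terms of the two shapes
\[
\frac{1}{(m+1)^s m!^{s-1}(\kappa h\delta_0)^{m+1}}\qquad\text{and}\qquad\frac{m!}{(m+1)!^s\,\kappa^{m+1}(h\delta_0)^{m+1}}\sum_{j\le m}j!^{s-1}z^j,\quad z:=2^snh\delta_0.
\]
The first is bounded in $m$ because $m!^{s-1}(\kappa h\delta_0)^{m+1}\to\infty$. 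For the second, the key remark is that $j\mapsto j!^{s-1}z^j$ is first non-increasing and then non-decreasing, so $\sum_{j\le m}j!^{s-1}z^j\le C'_h+(m+1)m!^{s-1}z^m$ with $C'_h$ independent of $m$; the $C'_h$-part is again of the first shape, while the leading part, after substituting $z^m=(2^sn)^m(h\delta_0)^m$, collapses — the powers of $h$ and of $\delta_0$ cancelling — to $(2^sn)^m\big/\big((m+1)^{s-1}\kappa^{m+1}h\delta_0\big)$, bounded in $m$ provided $\kappa\ge 2^sn$. I would therefore take $\kappa:=\max(\sqrt n\,\kappa_1,\,2^sn)$ (depending only on $n,s,A,K,V$) and let $C_h$ collect the finitely many $h$-dependent constants, obtaining $\vert\vert F\vert\vert^{\mathcal W}_{K\cap A,\,s,\,\kappa h}\le C_h\,\vert\vert F\vert\vert_{\overline V\cap A,\,s,\,h}$. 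The main obstacle is exactly this last bookkeeping: one must keep $\sum_{j\le m}j!^{s-1}z^j$ dominated by its last term rather than by a geometric series — the latter discards the factor $z^m$ and would force $\kappa$ to blow up as $h\to 0$ — and it is the factorial gain $(|\alpha|+m+1)!^s$ in the denominator of the Whitney seminorm (together with Lemma \ref{lemma:Whitney-curve} for the nearby pairs) that keeps $\kappa$ uniform in $h$.
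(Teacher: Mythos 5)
Your proof is correct and follows essentially the same route as the paper's: both split into nearby pairs (handled via $1$-regularity, a Lebesgue-number threshold $\delta$, and Lemma \ref{lemma:Whitney-curve} applied to $S_\alpha F$, giving the factor $\kappa\ge\sqrt{n}M$) and distant pairs (handled by the triangle inequality on $R_m=f_\alpha-T_m$ together with the pointwise Gevrey bounds on $\overline{V}\cap A$ and $|x-x'|\ge\delta$). The only divergence is cosmetic bookkeeping in the distant case — the paper absorbs the multinomial sum by taking $h'=(n+1)h$ and uses $\sup_{k}1/(k!^{s-1}(h\delta)^{k})\le\exp(\sigma(h\delta)^{-\sigma})$, whereas you dominate $\sum_{j\le m}j!^{s-1}z^{j}$ by its endpoint terms — and both versions correctly keep $\kappa$ independent of $h$ while allowing $C_h$ to blow up as $h\to0$.
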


\begin{proof}
  By the def\mbox{}inition of 1-regular, there exist a constant $M >
  0$ and a f\mbox{}inite family $\{V_i\}$ of open subsets in $X$ such
  that $K \subset \cup V_i\subset V$ and, for any $i$ and $x_1,x_2\in
  V_i \cap A$, there exists a curve $l\subset A \cap V$ joining $x_1$
  and $x_2$ satisfying $\vert l \vert \le M\vert x_1 - x_2 \vert$.

Then, there exists a positive constant $\delta > 0$ such that for any
$x,y\in K$ with $\vert x - y \vert < \delta$, there exists $i$ such
that $x,y\in V_i$.

\

\noindent
F\mbox{}irst, assume that $x_1,x_2\in K \cap A$ satisfy $\vert x_1 - x_2
\vert < \delta$. Then, there exists a path $l\subset K\cap A$ joining
$x_1$ and $x_2$ such that $|l|\le M |x_1 - x_2|$. Then, Lemma
\ref{lemma:Whitney-curve} implies that
$$
\begin{aligned}
\vert R_m(S_\beta F;\,x_1,x_2) \vert &\le 
\frac{(\sqrt{n}\vert l \vert)^{m+1}}{m!} \max_{\vert\alpha\vert = \vert \beta \vert + m+1}
\sup_{y \in l} \vert f_\alpha(y) \vert \\
&\le
\frac{(\sqrt{n}M)^{m+1}}{m!} (\vert \beta \vert + m + 1)!^s 
h^{\vert \beta \vert + m + 1} 
\vert\vert F \vert\vert_{\overline{V} \cap A,s,h} \vert x_1 - x_2 \vert^{m+1}.
\end{aligned}
$$
Moreover, we can suppose $\sqrt{n}M \ge 1$. Hence, if
$|x-y|\leq\delta$, we obtain
$$
\vert\vert F \vert\vert_{K \cap A, s, \sqrt{n}M h}^{\mathcal W}
\le \vert\vert F \vert\vert_{\overline{V} \cap A,s,h} \ .
$$

\

Now, assume that $x_1,x_2\in A \cap K$ satisfy 
$\vert x_1 - x_2 \vert \ge \delta$.
For $h'= (n+1)h$, we have
$$
\begin{aligned}
&\frac{m!\vert R_m(S_\beta F;\,x_1,x_2) \vert}
{h'^{\vert \beta \vert + m + 1}(\vert \beta \vert + m + 1)!^s 
\vert x_1 - x_2\vert^{m+1}} 
\le
\frac{m!(\vert f_\beta(x_1) \vert + \vert T_m(S_\beta F;\, x_1, x_2) \vert) }
{h'^{\vert \beta \vert + m + 1}(\vert \beta \vert + m + 1)!^s 
\vert x_1 - x_2\vert^{m+1}} \\
&
\qquad\le
2\vert\vert F \vert\vert_{\overline{V},s,h}
\sum_{\vert\gamma\vert \le m}
\frac{m! \vert \beta + \gamma\vert!^s h^{\vert \beta + \gamma\vert}
\vert x_1 - x_2\vert^{\vert\gamma\vert}}
{h'^{\vert \beta \vert + m + 1}(\vert \beta \vert + m + 1)!^s \gamma! 
\vert x_1 - x_2 \vert^{m+1}} \\
&
\qquad\le
2\vert\vert F \vert\vert_{\overline{V},s,h} 
\frac{1}{ h'\vert x_1 - x_2\vert} \frac{1}{(n+1)^m}\ \cdot \\
&\qquad\quad\quad
\cdot\sum_{\vert\gamma\vert \le m}
\frac{m!}{(m - \vert\gamma\vert)!\gamma!}
\frac{1}{(m - \vert \gamma \vert)!^{s-1} 
(h\vert x_1 - x_2 \vert)^{m-\vert \gamma \vert}} \\
&\qquad\le
2\vert\vert F \vert\vert_{\overline{V},s,h} 
\frac{1}{h'\delta}\exp(\sigma (h\delta)^{-\sigma}) \frac{1}{(n+1)^m}
\sum_{\vert\gamma\vert \le m}
\frac{m!}{(m - \vert\gamma\vert)!\gamma!} \\
&\qquad
= \frac{2}{h'\delta}\exp(\sigma (h\delta)^{-\sigma}) 
\vert\vert F \vert\vert_{\overline{V},s,h} 
\end{aligned}
$$
where $\sigma = \frac{1}{s-1}$.
The conclusion follows.
\end{proof}

Let $X=\mathbb R^n$, $A\subset X$ a compact subanalytic set and $F
\in \Sswj(X)$.  
Since $A$ is compact, the number of strata of a
stratif\mbox{}ication of $A$ is f\mbox{}inite.  Hence for any $h > 0$
(resp. some $h > 0$) we have $ \vert\vert F \vert\vert_{A,s,h} <
\infty $ if $*=(s)$ (resp. $*=\{s\}$) respectively.  Set
\begin{equation}
  \label{eq:sw_norm_charact}
{\mathcal S}{\mathcal W}^{*}_{A,h}(X) :=
\{F \in {\mathcal S}{\mathcal W}^{*}_{A}(X);
\vert\vert F \vert\vert_{A,s,h} < \infty \},
\end{equation}
and endow ${\mathcal S}{\mathcal W}^{*}_{A,h}(X)$ with the topology
induced by the norm $\vert\vert \cdot \vert\vert_{A,s,h}$.
Then, algebraically, we have 
$${\mathcal S}{\mathcal W}^{(s)}_A(X) \simeq
\underset{h>0}{\varprojlim}\, {\mathcal S}{\mathcal W}^{(s)}_{A,h}(X)
$$
and
$$
{\mathcal S}{\mathcal W}^{\{s\}}_A(X) \simeq
\underset{h>0}{\varinjlim}\, {\mathcal S}{\mathcal W}^{\{s\}}_{A,h}(X).
$$
Therefore, $\Sswj(X)$ can be endowed with a locally convex topology
induced by these projective or inductive limits.

\begin{prop}{\label{prop:whitney-topology}}
Let $X=\mathbb R^n$ and $A$ a 1-regular compact subanalytic subset in $X$.
Then 
$$
\Swj(X) = \Sswj(X) \ .
$$
Moreover, these spaces are topologically isomorphic. In particular, 
$\Sswj(X)$ is an {\bf{FS}} space if $*=(s)$, and
a {\bf{DFS}} space if $*=\{s\}$.
\end{prop}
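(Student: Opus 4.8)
The plan is to derive the statement from the seminorm characterisation of $\Swj$ in Theorem~\ref{thm:w_norm_charact} together with the estimate of Corollary~\ref{coro:1-regular-basic}, so that the analytic substance of the matter — which lives in Lemma~\ref{lemma:Whitney-curve} and Corollary~\ref{coro:1-regular-basic} — need not be repeated. Since $\Swj$ is a subsheaf of $\Sswj$, the set-theoretic content is the reverse inclusion $\Sswj(X)\subseteq\Swj(X)$; the topological statement will then follow from a direct comparison of the seminorms defining the two spaces, and the {\bf FS}/{\bf DFS} assertion by transporting structure along the resulting isomorphism, as $\Swj(X)$ is already known to be {\bf FS} when $*=(s)$ and {\bf DFS} when $*=\{s\}$.

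For the equality of underlying sets, I would take $F=\{f_\alpha\}\in\Sswj(X)$ and fix once and for all a relatively compact subanalytic open set $V\subset\rea^n$ with $A\subset V$, so that $\overline{V}\cap A=A$. Because $A$ is compact, any stratification of $A$ as in Definition~\ref{def:stratified-Whitney-jets} with $K=A$ has finitely many strata $A_\tau$, with $j_{\overline{A}_\tau,A}(F)\in{\mathcal W}^*_{\overline{A}_\tau}(X)$; since $A\subset\bigcup_\tau\overline{A}_\tau$, bounding $|f_\alpha|$ on $A$ by the corresponding quantities on the finitely many compact sets $\overline{A}_\tau$ gives $\Vert F\Vert_{A,s,h}<\infty$ for every $h>0$ if $*=(s)$ and for some $h>0$ if $*=\{s\}$ (as observed just before the statement). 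Then Corollary~\ref{coro:1-regular-basic}, applied with this $V$ and $K=A$, yields $\kappa>0$ and, for each $h>0$, a constant $C_h>0$ with $\Vert F\Vert^{\mathcal W}_{A,s,\kappa h}\le C_h\Vert F\Vert_{A,s,h}$, whence $\Vert F\Vert^{\mathcal W}_{A,s,h'}<\infty$ for every $h'>0$ in the Beurling case and for some $h'>0$ in the Roumieu case. Since both $\Vert\cdot\Vert_{A,s,h}$ and $\Vert\cdot\Vert^{\mathcal W}_{A,s,h}$ are non-increasing in $h$, one can in the Roumieu case choose a single $h$ for which both seminorms are finite; as $A\cap K\subseteq A$ for every compact $K\subset\rea^n$, Theorem~\ref{thm:w_norm_charact} now gives $F\in\Swj(X)$ in both cases.

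For the topologies, I would compare the presentations $\Swj(X)=\varprojlim_{h>0}{\mathcal W}^{s,h}_A(X)$ and $\Sswj(X)=\varprojlim_{h>0}{\mathcal S}{\mathcal W}^{*}_{A,h}(X)$ when $*=(s)$, with $\varinjlim$ in place of $\varprojlim$ when $*=\{s\}$; here ${\mathcal W}^{s,h}_A(X)$ carries the norm $\Vert\cdot\Vert_{A,s,h}+\Vert\cdot\Vert^{\mathcal W}_{A,s,h}$ while ${\mathcal S}{\mathcal W}^{*}_{A,h}(X)$ carries only $\Vert\cdot\Vert_{A,s,h}$. The inequality $\Vert\cdot\Vert_{A,s,h}\le\Vert\cdot\Vert_{A,s,h}+\Vert\cdot\Vert^{\mathcal W}_{A,s,h}$ makes every map ${\mathcal W}^{s,h}_A(X)\to{\mathcal S}{\mathcal W}^{*}_{A,h}(X)$ continuous, hence the identity $\Swj(X)\to\Sswj(X)$ continuous. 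In the other direction, after enlarging $\kappa$ so that $\kappa\ge1$, Corollary~\ref{coro:1-regular-basic} gives on ${\mathcal S}{\mathcal W}^{*}_{A,h}(X)$ the bound $\Vert F\Vert_{A,s,\kappa h}+\Vert F\Vert^{\mathcal W}_{A,s,\kappa h}\le(1+C_h)\Vert F\Vert_{A,s,h}$, so each map ${\mathcal S}{\mathcal W}^{*}_{A,h}(X)\to{\mathcal W}^{s,\kappa h}_A(X)$ is continuous and, passing to the (co)limit over $h$ with $\{\kappa h\}_{h>0}$ cofinal, the identity $\Sswj(X)\to\Swj(X)$ is continuous as well. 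Thus the identity is a topological isomorphism, and $\Sswj(X)$ inherits the {\bf FS} (resp. {\bf DFS}) property from $\Swj(X)$.

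The main obstacle is essentially absent at this stage: it was already met and overcome in Corollary~\ref{coro:1-regular-basic}, namely the control of the remainder seminorm $\Vert\cdot\Vert^{\mathcal W}$ for a stratified jet on a $1$-regular compact set. What is left is bookkeeping, the only slightly delicate points being the management of the parameter $h$ in the Roumieu case and the choice of the neighbourhood $V$ ensuring $\overline{V}\cap A=A$.
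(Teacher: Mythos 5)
Your proposal is correct and follows the paper's own route: the paper's proof is exactly the application of Corollary~\ref{coro:1-regular-basic} with $K=A$ and $V$ an open subanalytic neighbourhood of $A$, yielding $\Vert F\Vert^{\mathcal W}_{A,s,\kappa h}\le C\Vert F\Vert_{A,s,h}$, from which both the set-theoretic equality (via Theorem~\ref{thm:w_norm_charact}) and the topological isomorphism follow. You have merely written out the bookkeeping (finiteness of $\Vert F\Vert_{A,s,h}$ from compactness, the handling of $h$ in the Roumieu case, cofinality of $\{\kappa h\}$) that the paper leaves implicit.
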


\begin{proof}
  In Corollary \ref{coro:1-regular-basic}, choose $K$ as $A$ and
  $V$ as an open subanalytic subset containing $A$.
  Then we have
$$
\vert\vert F \vert\vert_{A, s, \kappa h}^{\mathcal W}
\le C\vert\vert F \vert\vert_{A,s,h}.
$$
The result follows.
\end{proof}

\begin{prop}\label{prop:swj_A_=wj_A}
Let $X$ be a real analytic manifold and $A$ a locally closed subanalytic set.
If $A$ is 1-regular at $p \in X$, then 
$
{\Swj}_{,\,p} = {\Sswj}_{,\,p}.
$
\end{prop}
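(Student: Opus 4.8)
The inclusion ${\Swj}_{,\,p}\subseteq{\Sswj}_{,\,p}$ is immediate, $\Swj$ being a subsheaf of $\Sswj$, so the plan is to prove the reverse inclusion ${\Sswj}_{,\,p}\subseteq{\Swj}_{,\,p}$. The assertion is local, and $\Swj$, $\Sswj$ are invariant under real analytic isomorphisms (Komatsu's result recalled above), so I would assume $X=\rea^n$, $p=0$, and that the isomorphism $\psi$ in Definition \ref{defs_regular} is the identity. Thus there are a constant $\kappa>0$, which we may assume satisfies $\sqrt n\,\kappa\ge1$, an open neighbourhood $U$ of $0$ and a compact neighbourhood $K\subset U$ of $0$ such that any two points of $A\cap U\cap K$ are joined by a subanalytic curve contained in $A\cap U$ of length at most $\kappa$ times their distance. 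A germ in ${\Sswj}_{,\,p}$ is represented by some $F\in\Sswj(W)$ with $W$ an open neighbourhood of $0$, and it suffices to produce an open neighbourhood $V$ of $0$ with $F|_V\in\Swj(V)$.

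For the choice of $V$ I would fix $\rho>0$ small enough that $\overline{B(0,(1+2\kappa)\rho)}\subset W\cap U$ and $B(0,\rho)\subset\mathring K$, and put $V:=B(0,\rho)$, $B':=B(0,(1+2\kappa)\rho)$. The point of this bookkeeping is the elementary remark that any curve of length $\le2\kappa\rho$ starting at a point of $B(0,\rho)$ stays in $\overline{B'}\subset W$; hence for any $x_1,x_2\in A\cap V$ the $1$-regularity hypothesis provides a subanalytic curve $l\subset A\cap\overline{B'}$ joining $x_1$ and $x_2$ with $|l|\le\kappa|x_1-x_2|$, and this curve lies in the region where $F$ is defined. (Lemma \ref{lemma:Whitney-curve} is stated for global sections; it is applied here after localizing its proof to a compact neighbourhood of $l$ inside $W$, or equivalently after extending $F$ to a global section of $\Sswj$ agreeing with it near $\overline{B'}$, using softness of $\Sswj$.)

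To finish I would verify the two conditions of Theorem \ref{thm:w_norm_charact} for $F$ over $A\cap K'$, for $K'\subset V$ compact subanalytic; since $K'\subset\overline{B'}$ it is enough to control everything over the compact subanalytic set $\overline{B'}\subset W$. First, $\|F\|_{A\cap\overline{B'},s,h}<\infty$ for every $h>0$ (case $*=(s)$), resp. for some $h>0$ (case $*=\{s\}$): by definition of $\Sswj$ there is a subanalytic stratification $\{A_\beta\}$ of $A$ with each $j_{A_\beta\cap\overline{B'},A}(F)\in{\mathcal W}^*_{A_\beta\cap\overline{B'}}(W)$; only finitely many strata meet the compact $\overline{B'}$, each such jet has finite seminorm $\|\cdot\|_{\cdot,s,h}$ by Theorem \ref{thm:w_norm_charact}, and the maximum over these finitely many strata bounds $\|F\|_{A\cap\overline{B'},s,h}$. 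Next, for $x_1\ne x_2$ in $A\cap K'$ with curve $l$ as above, Lemma \ref{lemma:Whitney-curve} applied to $S_\alpha F$ gives
\[
|R_m(S_\alpha F;x_1,x_2)|\le\frac{(\sqrt n\,|l|)^{m+1}}{m!}\sup_{|\gamma|=|\alpha|+m+1,\ y\in l}|f_\gamma(y)|\le\frac{(\sqrt n\,\kappa)^{m+1}}{m!}\,|x_1-x_2|^{m+1}(|\alpha|+m+1)!^s\,h^{|\alpha|+m+1}\,\|F\|_{A\cap\overline{B'},s,h},
\]
using the first step. Dividing by $|x_1-x_2|^{m+1}$ and by $(|\alpha|+m+1)!^s(h')^{|\alpha|+m+1}/m!$ with $h':=2\sqrt n\,\kappa\,h$, the factors $(1/2)^{|\alpha|+m+1}$ absorb the remaining constant and one obtains $\|F\|^{\mathcal W}_{A\cap K',s,h'}\le\|F\|_{A\cap\overline{B'},s,h}<\infty$, together with $\|F\|_{A\cap K',s,h'}\le\|F\|_{A\cap\overline{B'},s,h}<\infty$ since $h'\ge h$. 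Here $h'$ ranges over all of $\reagz$ in the Beurling case and equals $2\sqrt n\,\kappa\,h$ for the $h$ of the first step in the Roumieu case; in both cases Theorem \ref{thm:w_norm_charact} yields $F|_V\in\Swj(V)$, so the germ of $F$ at $p$ lies in ${\Swj}_{,\,p}$.

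The essential point — the only real obstacle beyond routine estimation, which then proceeds exactly as in the proof of Corollary \ref{coro:1-regular-basic} — is the passage from $1$-regularity \emph{at the single point $p$} to a usable statement: one must arrange that the joining curves furnished by the hypothesis stay inside the domain $W$ of the chosen representative, which is what the radius bookkeeping with $(1+2\kappa)\rho$ achieves, and one must use local finiteness of a stratification over a compact set to upgrade the stratumwise Whitney estimates to a single uniform one.
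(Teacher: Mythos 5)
Your proposal is correct and follows essentially the same route as the paper: the paper expresses both stalks as direct limits over shrinking neighbourhoods and invokes Corollary \ref{coro:1-regular-basic} to show that the restriction map on stratified jets factors through ${\mathcal W}^*$, which is exactly the content you re-derive by hand (the curve estimate of Lemma \ref{lemma:Whitney-curve} controlling $\vert\vert\cdot\vert\vert^{\mathcal W}$ by $\vert\vert\cdot\vert\vert$, plus finiteness of the sup-seminorm from the finitely many strata meeting a compact set, fed into Theorem \ref{thm:w_norm_charact}). Your radius bookkeeping with $(1+2\kappa)\rho$ is a correct, slightly more explicit handling of the localization that the paper leaves implicit in its choice of $U_1\supset\supset U_2\supset\supset U_3$.
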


\begin{proof}
Note that for any $p \in X$ we have
$$
{\Swj}_{,\,p} = \lim_{\underset{p \in U}{\longrightarrow}} 
{\mathcal W}^*_{A \cap \overline{U}}(X),\qquad
{\Sswj}_{,\,p} = \lim_{\underset{p \in U}{\longrightarrow}}
{\mathcal S}{\mathcal W}^*_{A \cap \overline{U}}(X).
$$
Further, we have
$$
{\mathcal W}^*_{A \cap \overline{U}}(X) \subset {\mathcal S}{\mathcal W}^*_{A \cap \overline{U}}(X).
$$
On the other hand, for any suf\mbox{}f\mbox{}iciently small open subanalytic
neighborhoods $U_1 \supset\supset U_2 \supset\supset U_3$ of $p$,
Corollary \ref{coro:1-regular-basic} implies that the restriction map
${\mathcal S}{\mathcal W}^*_{A \cap \overline{U}_1}(X) \to {\mathcal
  S}{\mathcal W}^*_{A \cap \overline{U}_3}(X)$ factorizes through ${\mathcal
  W}^*_{A \cap \overline{U}_2}(X)$. Hence the following diagram commutes
$$
\begin{matrix}
{\mathcal S}{\mathcal W}^*_{A \cap \overline{U}_1}(X) & \longrightarrow & 
{\mathcal W}^*_{A \cap \overline{U}_2}(X) \\
 & \searrow &\downarrow \\
 & & {\mathcal S}{\mathcal W}^*_{A \cap \overline{U}_3}(X) \ .
\end{matrix}
$$

The conclusion follows.
\end{proof}



\begin{coro}
  Let $X$ be a real analytic manifold, $A\subset X$ a locally closed
  subanalytic set and $\{A_\alpha\}$ a 1-regular stratif\mbox{}ication
  of $A$.  Then,  $ \Sswj = \Ssawj.  $
\end{coro}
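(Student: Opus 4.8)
The statement asserts the equality of two subsheaves of $\mc J_A$, so the plan is to prove the two inclusions ${\mathcal S}{\mathcal W}^*_{\{A_\alpha\}}\subseteq\Sswj$ and $\Sswj\subseteq{\mathcal S}{\mathcal W}^*_{\{A_\alpha\}}$ separately. The inclusion ${\mathcal S}{\mathcal W}^*_{\{A_\alpha\}}\subseteq\Sswj$ is formal: given $F\in{\mathcal S}{\mathcal W}^*_{\{A_\alpha\}}(U)$ and a compact subanalytic $K\subset U$, I would take the fixed stratification $\{A_\alpha\}$ itself as the stratification required by Definition \ref{def:stratified-Whitney-jets}. Since $\mathcal W^*_{B}(U)$ consists precisely of the jets $j_B(f)$, $f\in\ms C^*(U)$, it is stable under further restriction of the base set, and therefore from $j_{A_\alpha,A}(F)\in\mathcal W^*_{A_\alpha}(U)$ and $j_{A_\alpha\cap K,A}(F)=j_{A_\alpha\cap K,A_\alpha}\big(j_{A_\alpha,A}(F)\big)$ we get $j_{A_\alpha\cap K,A}(F)\in\mathcal W^*_{A_\alpha\cap K}(U)$ for every $\alpha$, i.e. $F\in\Sswj(U)$.

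For the reverse inclusion, the key preliminary remark is that each stratum $A_\alpha$ is $1$-regular at every point of $X$, so Proposition \ref{prop:swj_A_=wj_A} yields $\mathcal W^*_{A_\alpha,\,p}={\mathcal S}{\mathcal W}^*_{A_\alpha,\,p}$ for all $p\in X$; since both are subsheaves of $\mc J_{A_\alpha}$, this gives the equality of sheaves $\mathcal W^*_{A_\alpha}={\mathcal S}{\mathcal W}^*_{A_\alpha}$. Hence it suffices to show that $F\in\Sswj(U)$ implies $j_{A_\alpha,A}(F)\in{\mathcal S}{\mathcal W}^*_{A_\alpha}(U)$ for each $\alpha$, and I would prove the slightly more general claim that for any locally closed subanalytic $B\subseteq A$ the restriction $j_{B,A}$ maps $\Sswj(U)$ into ${\mathcal S}{\mathcal W}^*_B(U)$. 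To see this, fix a compact subanalytic $K\subset U$ and a subanalytic stratification $\{C_\beta\}$ of $A$ with $j_{C_\beta\cap K,A}(F)\in\mathcal W^*_{C_\beta\cap K}(U)$ for all $\beta$, and choose a subanalytic stratification $\{D_\gamma\}$ of $A$ refining both the stratification $\{C_\beta\}$ and the subanalytic partition $\{B,\,A\setminus B\}$ (such a common refinement by a subanalytic stratification exists by the standard stratification theory of subanalytic sets, and it extends to a stratification of $X$ finer than $\{B,X\setminus B\}$). Then $\{D_\gamma:\,D_\gamma\subseteq B\}$ is a subanalytic stratification of $B$, and for each such $D_\gamma$, picking $\beta$ with $D_\gamma\subseteq C_\beta$, one has
$$
j_{D_\gamma\cap K,B}\big(j_{B,A}(F)\big)=j_{D_\gamma\cap K,A}(F)=j_{D_\gamma\cap K,\,C_\beta\cap K}\big(j_{C_\beta\cap K,A}(F)\big)\in\mathcal W^*_{D_\gamma\cap K}(U).
$$
Thus $j_{B,A}(F)\in{\mathcal S}{\mathcal W}^*_B(U)$; taking $B=A_\alpha$ and using ${\mathcal S}{\mathcal W}^*_{A_\alpha}=\mathcal W^*_{A_\alpha}$ gives $j_{A_\alpha,A}(F)\in\mathcal W^*_{A_\alpha}(U)$ for all $\alpha$, i.e. $F\in{\mathcal S}{\mathcal W}^*_{\{A_\alpha\}}(U)$.

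The only genuinely non-formal ingredient is Proposition \ref{prop:swj_A_=wj_A} (and hence, ultimately, Corollary \ref{coro:1-regular-basic}), which is where $1$-regularity of the strata enters and which I expect to be the real content; everything else is bookkeeping with restrictions of jets and with common refinements of (not necessarily $1$-regular) subanalytic stratifications. The one point to watch is that the stratification furnished by the definition of $\Sswj$ depends on the chosen compact $K$, so the refinement $\{D_\gamma\}$ must be taken after $K$ is fixed; this causes no trouble because membership in ${\mathcal S}{\mathcal W}^*_{\{A_\alpha\}}(U)$ is a condition that involves no choice of $K$, so no compatibility of the refinements across different $K$ is needed.
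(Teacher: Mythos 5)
Your proof is correct and follows essentially the same route as the paper, whose entire argument is the observation that $j_{A_\alpha,A}(\Sswj)\subset{\mathcal S}{\mathcal W}^*_{A_\alpha}={\mathcal W}^*_{A_\alpha}$ for each ($1$-regular) stratum, the equality being Proposition \ref{prop:swj_A_=wj_A}. You merely supply the details the paper leaves implicit: the formal inclusion ${\mathcal S}{\mathcal W}^*_{\{A_\alpha\}}\subseteq\Sswj$ and the common-refinement argument showing that restriction to a locally closed subanalytic subset preserves stratified Whitney jets.
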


\begin{proof}
The result comes from the fact that
for any stratum $A_\alpha$, we have 
$$
j_{A_\alpha, A}(\Sswj) \subset {\mathcal S}{\mathcal W}^*_{A_\alpha}
=
{\mathcal W}^*_{A_\alpha}.
$$
\end{proof}

\begin{coro}{\label{coro:exact-1-regular}}
  Let $X$ be a real analytic manifold, and let $A_1$ and $A_2$ be
  closed subanalytic subsets, or open subanalytic subsets in $X$. If
  $A_1 \cap A_2$ is 1-regular at $p \in X$, then the sequence
\begin{equation}
  \label{eq:swj_closed_mv}
0 \to {\mathcal S}{\mathcal W}^*_{A_1 \cup A_2,p} \to
{\mathcal S}{\mathcal W}^*_{A_1,p}
\oplus {\mathcal S}{\mathcal W}^*_{A_2,p}
\to
{\mathcal S}{\mathcal W}^*_{A_1 \cap A_2,p}
\to 0
\end{equation}
is exact.
\end{coro}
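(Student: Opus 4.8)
The plan is to obtain exactness on the left and in the middle for free from Lemma~\ref{lemma:stratified-Whitney-exact}, and then to establish surjectivity of the last arrow separately, this being the only place where the $1$-regularity hypothesis enters, via Proposition~\ref{prop:swj_A_=wj_A}.

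First I would observe that the hypothesis on $A_1,A_2$ is exactly what is needed to apply Lemma~\ref{lemma:stratified-Whitney-exact} to the two-element family $\{A_1,A_2\}$ and $A:=A_1\cup A_2$: if $A_1$ and $A_2$ are both closed (resp. both open) in $X$, then they are closed (resp. open) in $A$, and in either case $A_1\cap A_2$ is a locally closed subanalytic subset of $X$. Hence the sequence of sheaves
$$
0 \to {\mathcal S}{\mathcal W}^*_{A_1 \cup A_2} \to
{\mathcal S}{\mathcal W}^*_{A_1} \oplus {\mathcal S}{\mathcal W}^*_{A_2}
\to {\mathcal S}{\mathcal W}^*_{A_1 \cap A_2}
$$
is exact. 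Since the stalk functor at $p$ is exact, passing to stalks yields the exactness of \eqref{eq:swj_closed_mv} at ${\mathcal S}{\mathcal W}^*_{A_1 \cup A_2, p}$ and at ${\mathcal S}{\mathcal W}^*_{A_1, p} \oplus {\mathcal S}{\mathcal W}^*_{A_2, p}$, so that it only remains to check that the morphism ${\mathcal S}{\mathcal W}^*_{A_1, p} \oplus {\mathcal S}{\mathcal W}^*_{A_2, p} \to {\mathcal S}{\mathcal W}^*_{A_1 \cap A_2, p}$ is surjective.

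For the surjectivity I would invoke Proposition~\ref{prop:swj_A_=wj_A}: since $A_1\cap A_2$ is $1$-regular at $p$, one has ${\mathcal S}{\mathcal W}^*_{A_1\cap A_2,p} = {\mathcal W}^*_{A_1\cap A_2,p}$. Let $G$ be a germ of this stalk, represented on a sufficiently small open neighbourhood $U$ of $p$ by an element of ${\mathcal W}^*_{A_1\cap A_2}(U)$; by the partition of unity characterization of ${\mathcal W}^*$ there is $f\in\ms C^*(U)$ with $j_{A_1\cap A_2}(f)=G$ on $U$. Then $F_1:=j_{A_1}(f)\in {\mathcal W}^*_{A_1}(U)\subset {\mathcal S}{\mathcal W}^*_{A_1}(U)$, and since $j_{A_1\cap A_2,A_1}\circ j_{A_1}=j_{A_1\cap A_2}$, the pair $(F_1,0)$ maps to $G$ on $U$; passing to germs at $p$ gives surjectivity, which completes the proof.

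Nothing here is really hard, but it is worth emphasizing where the hypothesis is used: the identification ${\mathcal S}{\mathcal W}^*_{A_1\cap A_2,p}={\mathcal W}^*_{A_1\cap A_2,p}$ is the only step requiring $1$-regularity, and it is indispensable — an arbitrary stratified germ on $A_1\cap A_2$ need not be the restriction of a stratified germ on $A_1$, so without $1$-regularity only the left-exactness and the exactness in the middle (which come from Lemma~\ref{lemma:stratified-Whitney-exact}) survive, cf. the remark following that lemma.
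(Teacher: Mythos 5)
Your proposal is correct and follows essentially the same route as the paper: Lemma~\ref{lemma:stratified-Whitney-exact} gives exactness except at the last spot, and surjectivity follows because $1$-regularity at $p$ forces ${\mathcal S}{\mathcal W}^*_{A_1\cap A_2,p}={\mathcal W}^*_{A_1\cap A_2,p}$, whose germs are jets of genuine $\ms C^*$ functions and hence restrict from $A_1$. The paper states this in two lines; you have merely (and correctly) filled in the details.
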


\begin{proof}
  By Lemma \ref{lemma:stratified-Whitney-exact}, it is
  suf\mbox{}f\mbox{}icient to prove the surjectivity. Since ${\mathcal
    S}{\mathcal W}^*_{A_1 \cap A_2} = {\mathcal W}^*_{A_1 \cap A_2}$
  holds, the result is clear.
\end{proof}

The condition ``$A_1 \cap A_2$ is 1-regular'' in Corollary
\ref{coro:exact-1-regular} is too strong.  Indeed, if $\mathrm{dim}\,
X = 2$, and if $A_1$ and $A_2$ are closed subanalytic subsets, then
\eqref{eq:swj_closed_mv} is always exact, see Theorem
\ref{thm:close_mv_swj}. We also note that, in the case of
$\mathrm{dim} X > 2$, we can f\mbox{}ind an example in which the
surjectivity of the third morphism of \eqref{eq:swj_closed_mv} does not
hold.

\begin{lemma}{\label{lemma:topology-whitney}}
  Let $X$ be a real analytic manifold and $A\subset X$ a compact
  subanalytic set.  Then $\mc {SW}^{(s)}_A(X)$ (resp. $\mc
  {SW}^{\{s\}}_A(X)$) can be endowed with an {\bf{FS}} (resp. a
  {\bf{DFS}}) locally convex topology. Moreover if $A$ is a finite
  union of compact subanalytic sets $B_1$, $\dots$, $B_k$, then the
  canonical morphism
$$
\iota: {\mathcal S}{\mathcal W}^*_{A}(X) 
\to
\underset{1 \le i \le k} \oplus {\mathcal S}{\mathcal W}^*_{B_i}(X)
$$
becomes an injective homomorphism of locally convex topological vector
spaces.
\end{lemma}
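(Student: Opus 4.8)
The plan is to realise $\mc{SW}^*_A(X)$, as a topological vector space, as a closed subspace of a \emph{finite} direct sum of Whitney jet spaces $\mc W^*_{\overline B}(X)$ over $1$-regular compact subanalytic sets $\overline B$, for which the $\mathbf{FS}$ (resp. $\mathbf{DFS}$) property is Proposition \ref{prop:whitney-topology}; the statement on $\iota$ then follows because in this description $\iota$ becomes a split injection between such direct sums. We may assume $X=\rea^n$ (cover the compact set $A$ by finitely many relatively compact coordinate charts and use the real analytic invariance of the sheaves $\ms C^*$; the local norms are then defined up to equivalence and patch). Choose a $1$-regular stratification $\{A_\alpha\}_{\alpha\in\Lambda}$ of $A$ (Proposition \ref{prop:kurdyka}(1)); since $A$ is compact $\Lambda$ is finite, and after passing to a stratification of $A$ compatible with $B_1,\dots,B_k$ and re-stratifying it $1$-regularly we may assume each $B_i$ is a union of strata, say $B_i=\bigsqcup_{\alpha\in\Lambda_i}A_\alpha$, with $\bigcup_i\Lambda_i=\Lambda$ (because $A=\bigcup_iB_i$). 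Each $\overline A_\alpha$ is a $1$-regular compact subanalytic set, being closed in $A$, so since $\Sswj=\Ssawj$ for a $1$-regular stratification (the Corollary following Proposition \ref{prop:swj_A_=wj_A}, together with $\mc W^*_{\overline A_\alpha}\simeq\mc W^*_{A_\alpha}$) we have, for $U$ open,
$$\mc{SW}^*_A(U)=\bigc{\,F\in\mc J_A(U)\ ;\ j_{\overline A_\alpha,A}(F)\in\mc W^*_{\overline A_\alpha}(U)\text{ for all }\alpha\in\Lambda\,},$$
and the analogous description for $\mc{SW}^*_{B_i}$ with $\Lambda$ replaced by $\Lambda_i$.

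Since the jet Mayer--Vietoris sequence for the closed cover $A=\bigcup_\alpha\overline A_\alpha$ is exact (as in the proof of Lemma \ref{lemma:stratified-Whitney-exact}), the displayed identity exhibits $\mc{SW}^*_A(X)$ as the kernel of the continuous map $\bigoplus_\alpha\mc W^*_{\overline A_\alpha}(X)\to\bigoplus_{\alpha,\beta}\mc J_{\overline A_\alpha\cap\overline A_\beta}(X)$ given by the compatibility relations, i.e. as a closed subspace of the finite direct sum $E:=\bigoplus_{\alpha\in\Lambda}\mc W^*_{\overline A_\alpha}(X)$. By Proposition \ref{prop:whitney-topology} each summand is $\mathbf{FS}$ if $*=(s)$ and $\mathbf{DFS}$ if $*=\{s\}$, hence so is $E$, and hence so is its closed subspace $\mc{SW}^*_A(X)$; concretely, in the $\mathbf{DFS}$ case one writes $\mc{SW}^{\{s\}}_A(X)=\varinjlim_{h}Y_h$, where $Y_h$ is the closed (Banach) subspace of $\bigoplus_\alpha\mc W^{s,h}_{\overline A_\alpha}(X)$ cut out by the compatibility relations, the linking maps $Y_h\hookrightarrow Y_{h'}$ being restrictions of the compact maps $\mc W^{s,h}_{\overline A_\alpha}(X)\to\mc W^{s,h'}_{\overline A_\alpha}(X)$. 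It remains to check that this topology coincides with the one defined before the lemma through the norms $\|\cdot\|_{A,s,h}$: on $\mc{SW}^*_A(X)$ one has $\|F\|_{A,s,h}=\max_\alpha\|j_{\overline A_\alpha,A}(F)\|_{\overline A_\alpha,s,h}$, while Corollary \ref{coro:1-regular-basic} bounds the remaining seminorms $\|j_{\overline A_\alpha,A}(F)\|^{\mc W}_{\overline A_\alpha,s,\kappa h}$ by $C_h\|F\|_{A,s,h}$; thus the two systems of seminorms dominate one another after rescaling $h$, and define the same projective (resp. inductive) limit. This proves the first assertion.

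The morphism $\iota$ is injective, since $\iota(F)=0$ forces $j_{B_i,A}(F)=0$ for every $i$ and hence $F=0$ (as $A=\bigcup_iB_i$). Under the identifications of the previous paragraphs $\iota$ is the restriction to $\mc{SW}^*_A(X)$ of the ``duplication'' morphism
$$\delta:\ \bigoplus_{\alpha\in\Lambda}\mc W^*_{\overline A_\alpha}(X)\ \lra\ \bigoplus_{i=1}^{k}\Bigr{\bigoplus_{\alpha\in\Lambda_i}\mc W^*_{\overline A_\alpha}(X)},\qquad (G_\alpha)_\alpha\longmapsto\Bigr{(G_\alpha)_{\alpha\in\Lambda_i}}_{1\le i\le k},$$
which maps into $\bigoplus_i\mc{SW}^*_{B_i}(X)$. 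Fixing, for each $\alpha$, an index $i(\alpha)$ with $\alpha\in\Lambda_{i(\alpha)}$ (possible as $\bigcup_i\Lambda_i=\Lambda$), the assignment $(G_{i,\alpha})_{i,\alpha}\mapsto(G_{i(\alpha),\alpha})_\alpha$ is a continuous linear left inverse of $\delta$, so $\delta$ is a topological embedding of locally convex spaces. A topological embedding restricts to a topological embedding on any linear subspace carrying the induced topology; applying this to $\mc{SW}^*_A(X)$ inside the source and to $\bigoplus_i\mc{SW}^*_{B_i}(X)$ inside the target, we conclude that $\iota$ is an injective homomorphism of locally convex topological vector spaces.

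The genuinely delicate point is the identification, in the second paragraph above, of the topology attached to $\mc{SW}^*_A(X)$ via the norms $\|\cdot\|_{A,s,h}$ with the closed-subspace topology: this rests on Corollary \ref{coro:1-regular-basic}, and because the constant $\kappa$ there is in general $\ge1$ one must rescale the parameter $h$ rather than keep it fixed. The reduction of a general real analytic manifold to $\rea^n$ is routine but likewise requires the $\mc W^{s,h}$-norms to be tracked only up to a chart-dependent equivalence; one should also note that refining a $1$-regular stratification to one compatible with $B_1,\dots,B_k$ may a priori destroy $1$-regularity, which is why one re-stratifies afterwards using Proposition \ref{prop:kurdyka}.
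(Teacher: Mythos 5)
Your proposal is correct, and its first half follows the paper's own route: realise $\mc{SW}^*_A(X)$ as a closed subspace of the finite direct sum $\bigoplus_\alpha \mc W^*_{\overline A_\alpha}(X)$ over a $1$-regular stratification via the exact jet sequence of Lemma \ref{lemma:stratified-Whitney-exact}, and invoke Proposition \ref{prop:whitney-topology} for each summand; your explicit reconciliation of this induced topology with the $\|\cdot\|_{A,s,h}$-topology via Corollary \ref{coro:1-regular-basic} plays the role of the paper's verification that another choice of stratification yields an equivalent topology. Where you genuinely diverge is the homomorphism property of $\iota$: the paper treats the two classes separately, using the open mapping theorem for the {\bf FS} case and, for the {\bf DFS} case, bornologicity together with the boundedness of $\{\iota(x_j)\}$ in a single step $h$ and the estimate $\|x\|_{A,s,h}\le\sum_i\|\iota(x)\|_{B_i,s,h}$; you instead exhibit $\iota$ as the restriction of the duplication map $\delta$ between the ambient direct sums, which admits an explicit continuous left inverse and is therefore a topological embedding, so that $\iota$ inherits this on the induced subspaces. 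Your argument is uniform in $*$ and avoids both the open mapping theorem and the bornological argument, at the price of having to fix once and for all a common $1$-regular stratification compatible with all the $B_i$ and of making the independence-of-stratification step (equivalently, the identification with the norm topology, which for the inductive limit case must be read as mutual continuity of the identity on each step of the two inductive systems) carry the full weight for both the source and each target $\mc{SW}^*_{B_i}(X)$; the paper's boundedness argument is what you would fall back on if one preferred not to refine the stratifications of the $B_i$ to a common one.
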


\begin{proof}
  Let $\{A_i\}$ be a 1-regular stratif\mbox{}ication of $A$
  satisf\mbox{}ying the following condition. 
  For each stratum $A_i$, there exist an open subset $U \subset X$ containing $\overline{A}_i$
  and isomorphism $\varphi_i: U \to V$ for some open subset $V$ in $\mathbb R^n$.

It follows from Lemma \ref{lemma:stratified-Whitney-exact}
that the following sequence is exact,
$$
0 \to {\mathcal S}{\mathcal W}^*_{A}(X) 
\to
\underset{1 \le i \le k} \oplus {\mathcal S}{\mathcal W}^*_{\overline{A}_i}(X)
\to
\underset{1 \le i < j \le k} \oplus {\mathcal W}^*_{\overline{A}_i \cap \overline{A}_j}(X)
\subset
\underset{1 \le i < j \le k} \oplus {\mathcal S}{\mathcal
  W}^*_{\overline{A}_i \cap \overline{A}_j}(X) \ .
$$
We can consider the sets $\overline{A}_i$ as 1-regular compact subanalytic
subsets of $\mathbb R^n$. Hence ${\mathcal S}{\mathcal
  W}^*_{\overline{A}_i}(X)$ has an {\bf{FS}} or a {\bf{DFS}} locally convex
topology by Proposition \ref{prop:whitney-topology}, and the morphism
$$
\underset{1 \le i \le k} \oplus {\mathcal S}{\mathcal W}^*_{\overline{A}_i}(X) =
\underset{1 \le i \le k} \oplus {\mathcal W}^*_{\overline{A}_i}(X) 
\to
\underset{1 \le i < j \le k} \oplus {\mathcal W}^*_{\overline{A}_i \cap \overline{A}_j}(X)
$$
is continuous for such topologies.  We endow $\Sswj(X)$ with the
induced topology.  By the exactness of the above sequence, the
topological space ${\mathcal S}{\mathcal W}^*_{A}(X)$ is a closed
subspace of $ \underset{1 \le i \le k} \oplus {\mathcal S}{\mathcal
  W}^*_{\overline{A}_i}(X).  $ Therefore ${\mathcal S}{\mathcal
  W}^*_{A}(X)$ is an {\bf{FS}} or a {\bf{DFS}} space.

One can check that another choice of 1-regular stratifications and
morphisms $\varphi_i$ induces an equivalent topology. Indeed, by
considering a 1-regular stratification finer than those, we can reduce the problem
to the following claim: Let $A \subset \mathbb R^n$ be a 
compact 1-regular subanalytic subset and $A_i \subset A (\subset \mathbb R^n)$ ($i=1,2,\dots,k$) compact
1-regular subanalytic subsets with $A = \cup A_i$. Then the canonical morphism
$$
\iota: {\mathcal S}{\mathcal W}^*_{A}(X) 
\to
\underset{1 \le i \le k} \oplus {\mathcal S}{\mathcal W}^*_{A_i}(X)
$$
is a homomorphism of locally convex topological vector spaces.

If $*=(s)$, then these vector spaces have {\bf{FS}} topologies and the
image of $\iota$ is closed by Lemma
\ref{lemma:stratified-Whitney-exact}. Hence the claim follows from the
open mapping theorem. 

Now, let us prove the claim for $*=\{s\}$. Since a {\bf{DFS}} space is
bornological, it suffices to show that for a sequence $\{x_j\}_{j \in
  \mathbb N} \subset {\mathcal S}{\mathcal W}^{\{s\}}_{A}(X)$ with
$\iota(x_j) \to 0$ ($j \to \infty$), the sequence $\{x_j\}$ also tends
to $0$.  Since $\{\iota(x_j)\}$ is bounded in the {\bf{DFS}} space
$\underset{1 \le i \le k} \oplus {\mathcal S}{\mathcal
  W}^{\{s\}}_{A_i}(X)$, there exists an $h > 0$ with $ \iota(x_j) \in
\underset{1 \le i \le k} \oplus {\mathcal S}{\mathcal
  W}^{\{s\}}_{A_i,h}(X) $ for any $j$.  Then, as $A = \cup A_i$, we
have the estimate
$$
\vert\vert x \vert\vert_{A,s,h}
\le \sum_{i=0}^k \vert\vert \iota(x) \vert\vert_{A_i,s,h},
$$
and from which the claim follows. \par
The last assertion in the lemma can be proved in the similar way.
\end{proof}
If $X=\mathbb R^n$, $A\subset X$ a compact subanalytic set and
$*=(s)$, the {\bf{FS}} topology in ${\mathcal S}{\mathcal
  W}^{(s)}_A(X)$ is described as follows.  Given a sequence
$\{F_n\}\subset{\mathcal S}{\mathcal W}^{(s)}_A(X)$, one has that
$$
\ou{n\to\infty}{\mathrm{lim}}F_n \to 0 \iff \vert\vert F_n
\vert\vert_{A,s,h} \to 0 \text{ for any } h > 0.
$$
Note that the convergence in ${\mathcal W}^{(s)}_A(X)$ is
def\mbox{}ined in the following way.  Given a sequence
$\{G_n\}\subset{\mathcal W}^{(s)}_A(X)$,
$$
\ou{n\to\infty}{\mathrm{lim}}G_n \to 0 \iff 
\vert\vert G_n \vert\vert_{A,s,h} \to 0 \text { and }
\vert\vert G_n \vert\vert^{\mathcal W}_{A,s,h} \to 0
\text{ for any } h > 0.
$$
These two topologies coincides if $A$ is 1-regular. 

\subsection{The subanalytic sheaf of the stratif\mbox{}ied Whitney jets}
Let $X$ be a real analytic manifold.  The subanalytic presheaf of
stratif\mbox{}ied Whitney jets of class $*$ is def\mbox{}ined by
$$
{\mathcal S}{\mathcal W}^*_{X_{sa}}(U) := 
{\mathcal S}{\mathcal W}^*_U(X) \ ,
$$
where $U$ is a subanalytic open subset of $X$. 

It follows from Lemma \ref{lemma:stratified-Whitney-exact} that
${\mathcal S}{\mathcal W}^*_{X_{sa}}$ is a subanalytic sheaf in
$X_{sa}$.  Since $\Gamma(\overline{U}, {\mathcal D}_X)$ acts on ${\mathcal
  S}{\mathcal W}^*_U(X)$, ${\mathcal S}{\mathcal W}^*_{X_{sa}}$ is a
$\rho_!{\mathcal D}_X$-module.

\begin{prop}\label{prop:wj=swj_1-reg}
If $U\subset X$ is a 1-regular open subanalytic set, then 
$$
{\mathcal S}{\mathcal W}^*_{X_{sa}}(U) 
= {\mathcal W}^*_U(X) \simeq
{\mathcal W}^*_{\overline{U}}(X) \simeq
\frac{ {\ms C}^*(X) }{ {\mathcal I}^*_{X,{\overline{U}}} (X)} \ ,
$$
where ${\mathcal I}^*_{X,\overline{U}}$ denotes the subsheaf of $\ms C^*$
consisting of functions vanishing on $\overline{U}$ up to inf\mbox{}inite
order.
\end{prop}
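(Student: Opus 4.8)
The plan is to read the displayed formula as a chain of three identifications and to treat each in turn: the first rests on Proposition~\ref{prop:swj_A_=wj_A}, the second on the uniqueness-of-extension property of Whitney jets recalled just after the definition of ${\mathcal W}^*_A$, and the third on a partition-of-unity (equivalently, soft-sheaf) argument over $X$.

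\emph{First identification ${\mathcal S}{\mathcal W}^*_{X_{sa}}(U)={\mathcal W}^*_U(X)$.} By the very definition of the subanalytic presheaf, ${\mathcal S}{\mathcal W}^*_{X_{sa}}(U)={\mathcal S}{\mathcal W}^*_U(X)$, so it suffices to show ${\mathcal S}{\mathcal W}^*_U={\mathcal W}^*_U$ as sheaves on $X$. Being $1$-regular, $U$ is $1$-regular at every $p\in X$, hence Proposition~\ref{prop:swj_A_=wj_A} gives $({\mathcal W}^*_U)_{,\,p}=({\mathcal S}{\mathcal W}^*_U)_{,\,p}$ for all $p$. Since ${\mathcal W}^*_U$ is a subsheaf of ${\mathcal S}{\mathcal W}^*_U$ and the inclusion is bijective on every stalk, it is an isomorphism of sheaves, hence a bijection on sections over $X$.

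\emph{Second identification.} Here I would simply invoke the isomorphism $j_{U,\overline U}\colon{\mathcal W}^*_{\overline U}(X)\overset{\sim}{\lra}{\mathcal W}^*_U(X)$, which is the special case ``$A=U$, ambient open set $X$'' of the fact recalled just after the definition of ${\mathcal W}^*_A$, namely that every class-$*$ Whitney jet on a locally closed set extends uniquely to one on its closure; no regularity is needed for this step.

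\emph{Third identification ${\mathcal W}^*_{\overline U}(X)\simeq\ms C^*(X)/{\mathcal I}^*_{X,\overline U}(X)$.} By definition ${\mathcal W}^*_{\overline U}=j_{\overline U}(\ms C^*)$ is the image sheaf of $j_{\overline U}\colon\ms C^*\to{\mathcal J}_{\overline U}$, and the kernel of this morphism is precisely ${\mathcal I}^*_{X,\overline U}$: on an open set, $j_{\overline U}(f)=0$ means every $D^\alpha f$ vanishes on $\overline U$, i.e.\ $f$ vanishes to infinite order on $\overline U$. Thus there is a short exact sequence of sheaves on $X$,
$$
0\to{\mathcal I}^*_{X,\overline U}\to\ms C^*\overset{j_{\overline U}}{\lra}{\mathcal W}^*_{\overline U}\to 0,
$$
and applying $\Gamma(X,-)$ reduces the claim to surjectivity of $\ms C^*(X)\to{\mathcal W}^*_{\overline U}(X)$; this is the only non-formal point and the one I expect to be the main obstacle. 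It follows from softness: since $s>1$, the class $\ms C^*$ is non-quasianalytic, so $\ms C^*$ is a soft sheaf of rings, and the ideal sheaf ${\mathcal I}^*_{X,\overline U}$, being a sheaf of $\ms C^*$-modules, is soft; as $X$ is paracompact, $\Gamma(X,-)$ is then exact on the above sequence. Concretely, given $F\in{\mathcal W}^*_{\overline U}(X)$ one picks local preimages $f_j\in\ms C^*(V_j)$ with $j_{\overline U}(f_j)=F|_{V_j}$ over a locally finite cover, sets $f=\sum_j\rho_j f_j$ for a $\ms C^*$-partition of unity $\{\rho_j\}$, and checks by Leibniz (using $\sum_j D^\beta\rho_j=D^\beta 1$) that $j_{\overline U}(f)=F$. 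Concatenating the three identifications completes the proof.
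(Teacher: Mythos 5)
Your proof is correct; the paper actually states this proposition without proof, and your three-step argument assembles precisely the ingredients it has just established — Proposition \ref{prop:swj_A_=wj_A} applied at every point (since $U$ is $1$-regular everywhere) for the first equality, the isomorphism $j_{U,\overline U}:{\mathcal W}^*_{\overline U}(X)\to{\mathcal W}^*_U(X)$ recorded after the definition of ${\mathcal W}^*_A$ for the second, and the partition-of-unity characterization of ${\mathcal W}^*_{\overline U}$ (equivalently, surjectivity of $\ms C^*(X)\to{\mathcal W}^*_{\overline U}(X)$, which the paper already notes as the equivalent definition of Whitney jets) for the third. Your softness/Leibniz verification of that last surjectivity is a correct re-derivation of the point the paper takes for granted, so the argument is exactly the intended one.
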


\begin{coro}
  For $U\in\Op^c(\xsa)$, set $\mc W^*_{\xsa}(U):=\mc W^*_U(X)$. Then
  $\mc W^{*a}_{\xsa}\simeq\mc {SW}^*_\xsa$.
\end{coro}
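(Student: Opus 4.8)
The plan is to apply Lemma~\ref{lemma:sheafification} to the presheaves $F:=\mc W^*_{\xsa}$ and $G:=\mc{SW}^*_{\xsa}$ on $\xsa$, taking for $\mc T\subset\Op^c(\xsa)$ the subfamily of $1$-regular open sets. First I would produce a morphism of presheaves $\phi\colon \mc W^*_{\xsa}\to\mc{SW}^*_{\xsa}$: for $U\in\Op^c(\xsa)$ let $\phi_U\colon \mc W^*_U(X)\hookrightarrow\mc{SW}^*_U(X)$ be the map on global sections over $X$ induced by the inclusion of sheaves $\mc W^*_U\subset\mc{SW}^*_U$ on $X$ (recall $\Swj$ is a subsheaf of $\Sswj$, here with $A=U$). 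Since both the restriction maps of $\mc W^*_{\xsa}$ and of $\mc{SW}^*_{\xsa}$ along an inclusion $V\subset U$ of subanalytic opens are induced by the restriction of jets $j_{V,U}$, the squares expressing compatibility of $\phi$ with restriction commute, so $\phi$ is indeed a morphism of presheaves.

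Next I would verify the two hypotheses of Lemma~\ref{lemma:sheafification}. For the covering condition, let $U\in\Op^c(\xsa)$; by Proposition~\ref{prop:kurdyka}(2) there is a locally finite open covering $\{U_j\}_{j\in J}$ of $U$ with each $U_j$ subanalytic and $1$-regular. Each $U_j\subset U$ is relatively compact, hence $U_j\in\Op^c(\xsa)$, and local finiteness over $U$ shows $\{U_j\}_j\in\cov U$; thus every $U$ admits a covering contained in $\mc T$. For the isomorphism condition, let $V\in\mc T$; Proposition~\ref{prop:wj=swj_1-reg} gives $\mc W^*_{\xsa}(V)=\mc W^*_V(X)=\mc{SW}^*_V(X)=\mc{SW}^*_{\xsa}(V)$, and under these identifications $\phi_V$ is the identity, hence an isomorphism.

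By Lemma~\ref{lemma:sheafification} we then conclude $\mc W^{*a}_{\xsa}\simeq\mc{SW}^{*a}_{\xsa}$. Finally, $\mc{SW}^*_{\xsa}$ is already a sheaf on $\xsa$ — this is the consequence of Lemma~\ref{lemma:stratified-Whitney-exact} recorded just before Proposition~\ref{prop:wj=swj_1-reg} — so $\mc{SW}^{*a}_{\xsa}\simeq\mc{SW}^*_{\xsa}$, which yields $\mc W^{*a}_{\xsa}\simeq\mc{SW}^*_{\xsa}$.

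As for the difficulty: essentially all of the mathematical content has already been pushed into Proposition~\ref{prop:wj=swj_1-reg} (and, behind it, the norm comparison of Corollary~\ref{coro:1-regular-basic}), so the present argument is purely \emph{formal}. The only points needing a little care are the verification that $\phi$ respects restrictions and the observation that the members of the covering furnished by Proposition~\ref{prop:kurdyka}(2) lie in $\Op^c(\xsa)$, both of which are immediate; one may alternatively drop relative compactness everywhere, using the equivalence of categories recalled after the definition of $\xsa$.
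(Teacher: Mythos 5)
Your argument is exactly the paper's: combine Proposition \ref{prop:wj=swj_1-reg}, Lemma \ref{lemma:sheafification} and Proposition \ref{prop:kurdyka}, using the $1$-regular opens as the family $\mc T$ and the fact that $\mc{SW}^*_{\xsa}$ is already a sheaf. The proposal is correct and simply spells out the details the paper leaves implicit.
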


\begin{proof}
  It is sufficient to combine Proposition \ref{prop:wj=swj_1-reg},
  Lemma \ref{lemma:sheafification} and Proposition \ref{prop:kurdyka}.
\end{proof}

\subsection{Stratif\mbox{}ied ultradistributions}
Let $X$ be a real analytic manifold, $A$ a closed subanalytic subset
in $X$ and let ${\mathcal D}b^*$ denote the sheaf of
ultradistributions of class $*$.  For any stratif\mbox{}ication
$\{A_\alpha\}$ of $A$, let us def\mbox{}ine stratif\mbox{}ied
ultradistributions along $\{A_\alpha\}$.
\begin{defs}
An ultradistribution $u \in {\mathcal D}b^*(U)$ is said to be
stratif\mbox{}ied along $\{A_\alpha\}$ in $U$ if $u$ can be written 
in the form:
$$
u=\sum_{\alpha} u_{\alpha},\qquad 
u_\alpha \in \Gamma_{\overline{A}_\alpha}(U, {\mathcal D}b^*).
$$
\end{defs}
We def\mbox{}ine the sheaf of \emph{stratif\mbox{}ied
ultradistributions of class $*$ along $\{A_\alpha\}$} as 
$$
{\mathcal S}{\mathcal D}b^*_{\{A_\alpha\}}(U)
:= \{u \in {\mathcal D}b^*(U);\, \text{$u$ is stratif\mbox{}ied along $\{A_\alpha\}$ 
in $U$}\} \ .
$$
For a stratif\mbox{}ication $\{A'_\alpha\}$ f\mbox{}iner than
$\{A_\alpha\}$, there exists the canonical morphism
$$
{\mathcal S}{\mathcal D}b^*_{\{A'_\alpha\}}(U)
\to
{\mathcal S}{\mathcal D}b^*_{\{A_\alpha\}}(U) \ .
$$
We def\mbox{}ine the sheaf of \emph{stratif\mbox{}ied
ultradistributions of class $*$ along $A$} as
$$
{\mathcal S}{\mathcal D}b^*_{[A]}(U) :=
\underset{\begin{subarray}{c}\text{stratif\mbox{}ication}\\ \{A_\alpha\}\text{ of }
      A\end{subarray}}{\varprojlim} {\mathcal S}{\mathcal
  D}b^*_{\{A_\alpha\}}(U) \ .
$$
Since for any stratif\mbox{}ication $\{A_\alpha\}$ there exists a
1-regular stratif\mbox{}ication f\mbox{}iner than $\{A_\alpha\}$, we
have
$$
{\mathcal S}{\mathcal D}b^*_{[A]}(U) =
\underset{\begin{subarray}{c}\text{1-regular stratif\mbox{}ication}\\
      \{A_\alpha\} \text{ of } A\end{subarray}}{\varprojlim} {\mathcal
  S}{\mathcal D}b^*_{\{A_\alpha\}}(U) \ .
$$

There exists the canonical injective sheaf homomorphism
$$
{\mathcal S}{\mathcal D}b^*_{[A]} \hookrightarrow \Gamma_A({\mathcal D}b^*) \ .
$$
This morphism is not surjective in general.
The following lemma follows easily from the def\mbox{}inition.
\begin{lemma}{\label{lemma:stratified-ultradistributions-exact}}
Let $X$ be a real analytic manifold, and let $A_1$, $\dots$, $A_l$ be
closed subanalytic subsets in $X$.
Then the sheaf homomorphism
$$
\oplus {\mathcal S}{\mathcal D}b^*_{[A_i]} \to
{\mathcal S}{\mathcal D}b^*_{[\cup A_i]}
$$
is surjective.
\end{lemma}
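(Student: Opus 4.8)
The plan is to reduce the surjectivity of $\oplus {\mathcal S}{\mathcal D}b^*_{[A_i]} \to {\mathcal S}{\mathcal D}b^*_{[\cup A_i]}$ to the decomposition property of stratified Whitney jets established in Lemma \ref{lemma:stratified-Whitney-exact}, by dualizing. First I would observe that the statement is local, so we may work on a small open set $U$, and by taking stalks (or sections over a fundamental system of relatively compact subanalytic open neighborhoods) it suffices to treat the case of a compact ambient region where each $A_i$ may be assumed compact subanalytic and $A := \cup A_i$ is compact subanalytic. In this situation, by the duality between $\mc{SW}^*_B(X)$ and $\Gamma_B(X,\mc Db^*)$-type spaces — more precisely, recalling that for a compact subanalytic $B$ the space ${\mathcal S}{\mathcal D}b^*_{\{B_\alpha\}}$ is built from the duals $\Gamma_{\overline{B}_\alpha}(X,\mc Db^*)$, and that ${\mathcal S}{\mathcal D}b^*_{[B]}$ is an inverse limit over $1$-regular stratifications — the surjectivity we want should be dual to an injectivity/exactness statement for the associated stratified Whitney jet spaces.

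The key steps, in order, would be: (1) Fix $u \in {\mathcal S}{\mathcal D}b^*_{[A]}(U)$; by definition, for \emph{every} stratification $\{A_\alpha\}$ of $A$, in particular for a $1$-regular one refining the partition $\{A_1 \setminus (A_2\cup\cdots),\ldots\}$ compatible with all the $A_i$, $u$ decomposes as $u = \sum_\alpha u_\alpha$ with $u_\alpha \in \Gamma_{\overline{A}_\alpha}(U,\mc Db^*)$. (2) Group the strata: for each $i$, let $v_i := \sum_{\alpha : A_\alpha \subset A_i} u_\alpha$ (assigning each stratum to one fixed index $i$ with $A_\alpha \subset A_i$, which is possible since each stratum of a refinement of $\{A,X\setminus A\}$ lies in some $A_i$), so that $v_i \in \Gamma_{A_i}(U,\mc Db^*)$ and $u = \sum_i v_i$. (3) The real content is to check that each $v_i$ actually lies in ${\mathcal S}{\mathcal D}b^*_{[A_i]}(U)$, i.e. that $v_i$ is stratifiable along \emph{every} stratification of $A_i$, not just the one we started with; here I would use that any two stratifications have a common refinement, together with a compatibility/coherence argument showing that the grouped pieces $v_i$ are independent (up to elements supported on smaller strata, which get absorbed) of the refinement chosen — this is where the inverse-limit definition of ${\mathcal S}{\mathcal D}b^*_{[\cdot]}$ must be exploited carefully, and it uses that $1$-regular stratifications are cofinal.

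I expect step (3) — showing the grouped components $v_i$ are genuinely well-defined elements of the inverse limit ${\mathcal S}{\mathcal D}b^*_{[A_i]}$, independently of the auxiliary stratification — to be the main obstacle. The subtlety is that a stratified ultradistribution along $A$ does not come with a \emph{canonical} decomposition (the $u_\alpha$ are not unique, as already noted in the excerpt: the morphism ${\mathcal S}{\mathcal D}b^*_{[A]}\hookrightarrow\Gamma_A(\mc Db^*)$ need not be surjective but the \emph{decomposition} within a fixed stratification is non-unique). The clean way around this is to dualize: the surjectivity in question is equivalent, via the {\bf{FS}}/{\bf{DFS}} duality and Lemma \ref{lemma:topology-whitney}, to the statement that the canonical map ${\mathcal S}{\mathcal W}^*_{A}(X) \to \oplus_i {\mathcal S}{\mathcal W}^*_{A_i}(X)$ is a (strict) injective homomorphism of locally convex spaces with closed image, together with left-exactness of the Mayer--Vietoris-type sequence for $\mc{SW}^*$ — precisely what Lemma \ref{lemma:stratified-Whitney-exact} and Lemma \ref{lemma:topology-whitney} provide. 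So the actual proof would: pass to the compact case, invoke the exact sequence \eqref{eq:exact-Whitney} for $\{A_i\}$ together with its topological strictness from Lemma \ref{lemma:topology-whitney}, apply the duality functor (exact on the relevant category of {\bf{FS}}/{\bf{DFS}} spaces), and identify the resulting surjection with $\oplus {\mathcal S}{\mathcal D}b^*_{[A_i]} \to {\mathcal S}{\mathcal D}b^*_{[\cup A_i]}$, finally sheafifying and passing back to general $U$ by the local nature of the claim.
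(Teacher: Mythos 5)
The paper does not write out a proof of this lemma: it is introduced only with the remark that it ``follows easily from the def\mbox{}inition'', and the intended argument is evidently your steps (1)--(2) --- refine a stratif\mbox{}ication of $A=\cup A_i$ so that every stratum lies in some $A_i$, decompose $u=\sum_\alpha u_\alpha$, and group the $u_\alpha$ according to an assignment of strata to indices. You are right that the only real content sits in your step (3): since the transition maps in the def\mbox{}inition of ${\mathcal S}{\mathcal D}b^*_{[B]}$ are inclusions, that space is the \emph{intersection} of the spaces ${\mathcal S}{\mathcal D}b^*_{\{B_\beta\}}$ over all stratif\mbox{}ications of $B$, whereas the grouped piece $v_i$ is a priori only known to lie in ${\mathcal S}{\mathcal D}b^*_{\mathcal T_i}$ for the one induced stratif\mbox{}ication $\mathcal T_i$ of $A_i$; there is no inclusion between ${\mathcal S}{\mathcal D}b^*_{\mathcal T}$ and ${\mathcal S}{\mathcal D}b^*_{\mathcal T'}$ for two unrelated stratif\mbox{}ications, so the grouping does not conclude by itself. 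Your diagnosis of where the work lies is therefore sharper than the paper's one-line dismissal.

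However, the repair you propose is circular as the paper is organized. The identif\mbox{}ication of ${\mathcal S}{\mathcal D}b^*_{[A]}(X)$ with the strong dual of the stratif\mbox{}ied Whitney jets is Theorem \ref{thm:dual_swj}, and the proof of that theorem invokes the present lemma (exactness of the third row of diagram \eqref{eq:diagram}) precisely in order to reduce to a compact set in $\mathbb R^n$; so you cannot quote the {\bf{FS}}/{\bf{DFS}} duality between ${\mathcal S}{\mathcal D}b^*_{[\,\cdot\,]}$ and ${\mathcal S}{\mathcal W}^*_{\cdot}$ wholesale. The route is salvageable, because the second half of the proof of Theorem \ref{thm:dual_swj} establishes, for a compact subanalytic $A\subset\mathbb R^n$ and any $1$-regular stratif\mbox{}ication $\{A_\alpha\}$, the equalities \eqref{eq:dual-1-regular-1} and \eqref{eq:dual-1-regular-2} (hence ${\mathcal S}{\mathcal D}b^*_{\{A_\alpha\}}(X)={\mathcal S}{\mathcal D}b^*_{[A]}(X)$) using only the Whitney--Kantor duality, Proposition \ref{prop:dense-whitney} and Lemma \ref{lemma:topology-whitney}, and not the present lemma. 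Granting that, either your dualization of the closed embedding ${\mathcal S}{\mathcal W}^*_A(X)\to\oplus_i{\mathcal S}{\mathcal W}^*_{A_i}(X)$, or, more simply, the grouping argument combined with ${\mathcal S}{\mathcal D}b^*_{\mathcal T_i}={\mathcal S}{\mathcal D}b^*_{[A_i]}$ for the induced $1$-regular stratif\mbox{}ication, closes step (3) for compact sets in a chart; the general case then follows by checking surjectivity on stalks. You should make this reordering of the paper's logic explicit, since as written your plan assumes the duality theorem that the paper in turn derives from the lemma you are proving.
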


Remark that, in general, the middle of the sequence
$$
0
\to
{\mathcal S}{\mathcal D}b^*_{[A_1\cap A_2]}
\to
{\mathcal S}{\mathcal D}b^*_{[A_1]}
\oplus 
{\mathcal S}{\mathcal D}b^*_{[A_2]}
\to
{\mathcal S}{\mathcal D}b^*_{[A_1 \cup A_2]}
\to 0
$$
is not exact.

\begin{theorem}\label{thm:dual_swj}
  Let $X$ be a real analytic manifold, and $A\subset X$ a compact
  subanalytic set. Then, algebraically, we have 
$$
{\mathcal S}{\mathcal D}b^*_{[A]}(X) \simeq (\Sswj\Volo(X))'
$$
where $(\Sswj\Volo(X))'$ denotes the topological dual space of $\Sswj\Volo(X)$
and ${\mathcal V}_X$ designates the sheaf of volume elements in $X$, i.e.,
$\omega^{(n)}_X \underset{\mathbb Z}{\otimes} {or}_X$.
\end{theorem}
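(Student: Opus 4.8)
The plan is to fix, once and for all, a $1$-regular stratification $\{A_\alpha\}_{\alpha\in\Lambda}$ of the compact subanalytic set $A$ (which is finite, $A$ being compact), to dualize the Mayer--Vietoris sequence of Lemma~\ref{lemma:stratified-Whitney-exact} for the closed covering $A=\bigcup_\alpha\overline{A}_\alpha$, and to observe that the resulting description of $({\mathcal S}{\mathcal W}^*_A\Volo(X))'$ is independent of the chosen stratification, hence coincides with the projective limit defining ${\mathcal S}{\mathcal D}b^*_{[A]}(X)$. Since each $A_\alpha$ is $1$-regular so is $\overline{A}_\alpha$, and each $\overline{A}_\alpha$ is a compact $1$-regular subanalytic set with ${\mathcal S}{\mathcal W}^*_{\overline{A}_\alpha}={\mathcal W}^*_{\overline{A}_\alpha}$ (Proposition~\ref{prop:whitney-topology}); by Lemma~\ref{lemma:stratified-Whitney-exact} the sequence
$$
0 \to {\mathcal S}{\mathcal W}^*_A\Volo(X) \overset{\iota}{\to} \bigoplus_{\alpha} {\mathcal W}^*_{\overline{A}_\alpha}\Volo(X) \overset{\delta}{\to} \bigoplus_{\alpha<\beta} {\mathcal S}{\mathcal W}^*_{\overline{A}_\alpha\cap\overline{A}_\beta}\Volo(X)
$$
is exact, so ${\mathcal S}{\mathcal W}^*_A\Volo(X)=\ker\delta$ is a closed subspace of $E:=\bigoplus_\alpha{\mathcal W}^*_{\overline{A}_\alpha}\Volo(X)$; moreover all these spaces are \textbf{FS} (resp. \textbf{DFS}) and $\iota$ is a topological embedding by Lemma~\ref{lemma:topology-whitney}.

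Next I would dualize. By the Whitney--Kantor duality recalled in Section~\ref{section:recall} (the strong dual of ${\mathcal W}^*_B\Volo(X)$ is $\Gamma_B(X,{\mathcal D}b^*)$ for $B$ compact subanalytic) one gets $E'_b=\bigoplus_\alpha\Gamma_{\overline{A}_\alpha}(X,{\mathcal D}b^*)$, and since $\iota$ is a closed embedding its transpose $q\colon E'_b\twoheadrightarrow({\mathcal S}{\mathcal W}^*_A\Volo(X))'$ is onto (Hahn--Banach). The composition ${\mathcal W}^*_A\Volo(X)\hookrightarrow{\mathcal S}{\mathcal W}^*_A\Volo(X)\overset{\iota}{\to}E$ is $F\mapsto(j_{\overline{A}_\alpha,A}(F))_\alpha$, and since each $u_\alpha\in\Gamma_{\overline{A}_\alpha}(X,{\mathcal D}b^*)$ is supported in $\overline{A}_\alpha$, the transpose of $j_{\overline{A}_\alpha,A}$ is the inclusion $\Gamma_{\overline{A}_\alpha}(X,{\mathcal D}b^*)\hookrightarrow\Gamma_A(X,{\mathcal D}b^*)$; hence the transpose $\Sigma\colon E'_b\to({\mathcal W}^*_A\Volo(X))'=\Gamma_A(X,{\mathcal D}b^*)$ of that composition is the addition map $(u_\alpha)_\alpha\mapsto\sum_\alpha u_\alpha$, whose image is $\sum_\alpha\Gamma_{\overline{A}_\alpha}(X,{\mathcal D}b^*)={\mathcal S}{\mathcal D}b^*_{\{A_\alpha\}}(X)$.

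Writing $\Sigma=j\circ q$, where $j\colon({\mathcal S}{\mathcal W}^*_A\Volo(X))'\to\Gamma_A(X,{\mathcal D}b^*)$ is the transpose of the inclusion ${\mathcal W}^*_A\Volo(X)\hookrightarrow{\mathcal S}{\mathcal W}^*_A\Volo(X)$, surjectivity of $q$ gives $\operatorname{Im}(j)=\operatorname{Im}(\Sigma)={\mathcal S}{\mathcal D}b^*_{\{A_\alpha\}}(X)$. As $\operatorname{Im}(j)$ depends only on the pair $({\mathcal W}^*_A\Volo(X),{\mathcal S}{\mathcal W}^*_A\Volo(X))$ and not on the stratification, the subspace ${\mathcal S}{\mathcal D}b^*_{\{A_\alpha\}}(X)\subset\Gamma_A(X,{\mathcal D}b^*)$ is the same for every $1$-regular stratification; consequently the transition maps of the projective system are identities and ${\mathcal S}{\mathcal D}b^*_{[A]}(X)={\mathcal S}{\mathcal D}b^*_{\{A_\alpha\}}(X)=\operatorname{Im}(j)$. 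Finally $j$ is injective, for its kernel consists of the continuous functionals on ${\mathcal S}{\mathcal W}^*_A\Volo(X)$ vanishing on ${\mathcal W}^*_A\Volo(X)$, and ${\mathcal W}^*_A\Volo(X)$ is dense in ${\mathcal S}{\mathcal W}^*_A\Volo(X)$ by the density result proved in Appendix~\ref{section:sgi}. Hence $j$ restricts to the desired algebraic isomorphism $({\mathcal S}{\mathcal W}^*_A\Volo(X))'\overset{\sim}{\to}{\mathcal S}{\mathcal D}b^*_{[A]}(X)$.

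The step I expect to be the main obstacle is making the reduction to a single stratification rigorous: namely, verifying that the identification ${\mathcal S}{\mathcal D}b^*_{\{A_\alpha\}}(X)=\operatorname{Im}(j)$ is genuinely stratification-independent and compatible with refinements, so that the projective limit collapses — this is precisely where Lemma~\ref{lemma:stratified-Whitney-exact} and the Appendix density feed in — while also making sure the functional-analytic manipulations (transpose of a closed embedding being onto, commuting the strong dual with finite direct sums, the Whitney--Kantor dual pairing on a manifold via $\Volo$) hold uniformly in the Beurling (\textbf{FS}) and Roumieu (\textbf{DFS}) cases.
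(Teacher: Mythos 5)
Your proposal is correct and follows essentially the same route as the paper's own proof: both arguments embed ${\mathcal S}{\mathcal W}^*_A\Volo(X)$ topologically into $\bigoplus_\alpha{\mathcal W}^*_{\overline{A}_\alpha}\Volo(X)$ via a $1$-regular stratification (Lemmas \ref{lemma:stratified-Whitney-exact} and \ref{lemma:topology-whitney}), dualize using the Whitney--Kantor pairing and Hahn--Banach to identify the image with ${\mathcal S}{\mathcal D}b^*_{\{A_\alpha\}}(X)$, use the density of ${\mathcal W}^*_A$ in ${\mathcal S}{\mathcal W}^*_A$ (Proposition \ref{prop:dense-whitney}) for injectivity, and invoke stratification-independence to collapse the projective limit defining ${\mathcal S}{\mathcal D}b^*_{[A]}(X)$. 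The only presentational difference is that the paper first reduces from a general manifold to $X=\rea^n$ via the decomposition diagram before applying the Whitney--Kantor duality, whereas you apply it directly on the manifold.
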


\begin{proof}
The continuous morphism $j_A: {\ms C}^*(X) \to \Sswj(X)$ induces
the morphism
$$
j^t_A: (\Sswj\Volo(X))' \to 
({\ms C}^*\Volo(X))' \subset {\mathcal D}b^*(X).
$$
For any $\varphi(x) \in {\ms C}^*(X)$ with
$\operatorname{supp}(\varphi) \cap A = \varnothing$, we have
$j_A(\varphi(x)) = 0$. Hence $\operatorname{im} j^t_A \subset
\Gamma_A(X, {\mathcal D}b^*)$.  Moreover since $j_A({\ms C}^*(X))$ is
dense in $\Sswj(X)$ by Proposition {\ref{prop:dense-whitney}}, the morphism
$$
j^t_A: (\Sswj\Volo(X))' \hookrightarrow \Gamma_A(X, {\mathcal D}b^*).
$$
is injective.

Let $A_1$, $A_2$, $\dots$, $A_l$ be closed subanalytic subsets in $X$
with $\cup A_i = A$.
If we prove that, for each $i$, 
$$
j^t_{A_i}( ({\mathcal S}{\mathcal W}^*_{A_i}\Volo(X))') =
{\mathcal S}{\mathcal D}b^*_{[A_i]}(X) \ ,
$$
 then 
$$
j^t_{A}( ({\mathcal S}{\mathcal W}^*_{A}\Volo(X))') =
{\mathcal S}{\mathcal D}b^*_{[A]}(X)
$$
follows from the following commutative diagram
\begin{equation}
  \label{eq:diagram}
\begin{matrix}
  0        &  & 0 & & \\
\downarrow &  & \downarrow & & \\
\oplus ({\mathcal S}{\mathcal W}^*_{A_i}\Volo(X))' & \longrightarrow & 
(\Sswj\Volo(X))'  & 
\longrightarrow & 0 \\
\downarrow     &  & \downarrow & & \\
\oplus\Gamma_{A_i}(X,{\mathcal D}b^*) & \longrightarrow & 
\Gamma_A(X, {\mathcal D}b^*)  & & \\
\uparrow &  & \uparrow  & & & \\
\oplus {\mathcal S}{\mathcal D}b^*_{[A_i]}(X) & \longrightarrow & 
{\mathcal S}{\mathcal D}b^*_{[A]}(X) & \longrightarrow & 0 & \\
\uparrow           &  & \uparrow & & \\
    0       &  & 0 & &
\end{matrix}
\end{equation}
The first row of \eqref{eq:diagram} is exact since
$$
\Sswj\Volo(X) \to \oplus {\mathcal S}{\mathcal W}^*_{A_i}\Volo(X)
$$
is an injective homomorphism of locally topological vector spaces by
Lemma \ref{lemma:topology-whitney}.
The third row of \eqref{eq:diagram} is exact by Lemma
\ref{lemma:stratified-ultradistributions-exact}.
All vertical arrows of \eqref{eq:diagram} are injective.

By these observations, we can reduce the problem to the case
$X=\mathbb R^n$ and $A\subset X$ is a compact subanalytic set.
F\mbox{}irst recall that if $B$ is a compact set in $\mathbb R^n$,
then it follows from the result of Whitney and Kantor that
$$
j_B^t: ({\mathcal W}^*_B(X))' \simeq \Gamma_B(X, {\mathcal D}b^*) \ .
$$
Let $\{A_\alpha\}$ be a $1$-regular stratif\mbox{}ication of $A$. Let
us consider the following commutative diagram
$$
\begin{matrix}
  & &   &  & 0 \\
  & &   &  & \downarrow \\
0 & \longrightarrow & {\mathcal W}_A^*(X) & \longrightarrow & 
{\mathcal S}{\mathcal W}_A^*(X) \\
& & \downarrow & & \downarrow \\
& & \oplus {\mathcal W}_{ \overline{A}_\alpha}^*(X) 
& \simeq
&
\oplus {\mathcal S}{\mathcal W}_{ \overline{A}_\alpha}^*(X) \ .
\end{matrix}
$$
Here the f\mbox{}irst horizontal arrow is injective and has a dense image
by Proposition {\ref{prop:dense-whitney}}.
Since each $\overline{A}_\alpha$ is 1-regular,
the second horizontal arrow is topologically isomorphic.
The second vertical arrow is an injective homomorphism of
locally convex topological vector spaces.
Then taking the dual of the diagram, we have
$$
\begin{matrix}
 & & 0 \\
 & & \uparrow \\
\Gamma_A(X, {\mathcal D}b^*) & \overset{j^t_A}{\longleftarrow} &
(\Sswj(X))' & \longleftarrow & 0 \\
\uparrow & & \uparrow  \\
\oplus \Gamma_{\overline{A}_\alpha}(X, {\mathcal D}b^*) & 
\overset{j^t_{\overline{A}_\alpha}}{\simeq} &
\oplus ({\mathcal S}{\mathcal W}^*_{\overline{A}_\alpha}(X))'.
\end{matrix}
$$
Hence we can conclude 
\begin{equation}{\label{eq:dual-1-regular-1}}
j^t_A( (\Sswj(X))') = \operatorname{Im}
\left(\oplus \Gamma_{\overline{A}_\alpha}(X, {\mathcal D}b^*)\right)
= {\mathcal S}{\mathcal D}b^*_{\{A_\alpha\}}(X)
\end{equation}
for any 1-regular stratif\mbox{}ication $\{A_\alpha\}$ of $A$.
In particular, we obtain
\begin{equation}{\label{eq:dual-1-regular-2}}
j^t_A( (\Sswj(X))') = \varprojlim\,
{\mathcal S}{\mathcal D}b^*_{\{A_\alpha\}}(X)
= {\mathcal S}{\mathcal D}b^*_{[A]}(X).
\end{equation}
The conclusion follows.

%
\end{proof}

\begin{coro}{\label{coro:1-regular-equiv}}
  Let $X$ be a real analytic manifold, $A\subset X$ a closed
  subanalytic set. If $\{A_\alpha\}$ is a 1-regular
  stratif\mbox{}ication of $A$, then we have
$$
{\mathcal S}{\mathcal D}b^*_{[A]} =
{\mathcal S}{\mathcal D}b^*_{\{A_\alpha\}}.
$$
In particular, if $A$ is 1-regular at $p \in X$, then we have
$$
\left({{\mathcal S}{\mathcal D}b^*_{[A]}}\right)_{p} = 
\left({\Gamma_A({\mathcal D}b^*)}\right)_{p}.
$$
\end{coro}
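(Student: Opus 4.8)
The plan is to derive both statements from Theorem \ref{thm:dual_swj} and the seminorm estimate of Corollary \ref{coro:1-regular-basic}; it is cleanest to prove the ``in particular'' stalk statement first and to deduce the sheaf identity ${\mathcal S}{\mathcal D}b^*_{[A]}={\mathcal S}{\mathcal D}b^*_{\{A_\alpha\}}$ from it. So suppose $A$ is $1$-regular at $p$; the curve condition of Definition \ref{defs_regular} localizes, so $A$ is $1$-regular on a whole neighborhood of $p$. Since the claim concerns a stalk, replacing $A$ by $A\cap\overline W$ for a small relatively compact subanalytic $W\ni p$ we may assume $A$ compact (the germs at the interior point $p$ are unchanged), still $1$-regular near $p$. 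The inclusion $({\mathcal S}{\mathcal D}b^*_{[A]})_p\subset(\Gamma_A({\mathcal D}b^*))_p$ is clear. For surjectivity I would take a germ at $p$ of $u\in\Gamma_A({\mathcal D}b^*)$, a representative on a neighborhood $V$ of $p$, and an ultra-differentiable cutoff $\chi$ of class $*$ with $\chi\equiv1$ near $p$ and $\supp{\chi}$ a compact subset of $V$ contained in the region where $A$ is $1$-regular. Then $\chi u\in\Gamma_A(X,{\mathcal D}b^*)$, which by the result of Whitney and Kantor is the dual of $\Swj\Volo(X)$; it is supported in the compact set $K:=A\cap\supp{\chi}$, and on $K$ the Whitney seminorm $\|\cdot\|^{\mathcal W}_{K,s,h}$ is controlled by the seminorms $\|\cdot\|_{A,s,h}$ by Corollary \ref{coro:1-regular-basic} (whose proof uses $1$-regularity only along the short curves that occur, hence only near $K$). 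Since the topology of $\Sswj\Volo(X)$ is defined by the seminorms $\|\cdot\|_{A,s,h}$ alone, the functional $\chi u$, a priori continuous only on $\Swj\Volo(X)$, is in fact continuous for the topology induced from $\Sswj\Volo(X)$, hence extends to it; by Theorem \ref{thm:dual_swj} this means $\chi u\in{\mathcal S}{\mathcal D}b^*_{[A]}(X)$. As $u$ and $\chi u$ have the same germ at $p$, we conclude $u\in({\mathcal S}{\mathcal D}b^*_{[A]})_p$.

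The sheaf identity then follows by checking it on stalks at each $p\in X$, the inclusion $\subset$ being the definition of the projective limit. A germ at $p$ of ${\mathcal S}{\mathcal D}b^*_{\{A_\alpha\}}$ is, by local finiteness of $\{\overline A_\alpha\}$, a finite sum $\sum_i u_i$ with $u_i\in(\Gamma_{\overline A_{\alpha_i}}({\mathcal D}b^*))_p$, and since ${\mathcal S}{\mathcal D}b^*_{[A]}$ is stable under sums it suffices to treat each $u_i$. Each $Z_i:=\overline A_{\alpha_i}$ is a compact $1$-regular subanalytic set — closures of $1$-regular sets are $1$-regular, and the strata of a $1$-regular stratification are relatively compact — so by the stalk statement just proved $u_i\in({\mathcal S}{\mathcal D}b^*_{[Z_i]})_p$; and $({\mathcal S}{\mathcal D}b^*_{[Z_i]})_p\subset({\mathcal S}{\mathcal D}b^*_{[A]})_p$ because any stratification of $A$ may be refined so as to be compatible with $Z_i$, and then a decomposition of $u_i$ along the induced stratification of $Z_i$ is \emph{a fortiori} a decomposition along that of $A$ (the closure of a stratum of $Z_i$ is contained in the closure of the corresponding stratum of $A$). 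This yields $\subset$, hence equality, and the ``in particular'' is immediate.

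The main obstacle is the extension-by-continuity step in the stalk statement: one has to organize the supports so that multiplication by $\chi$ does not destroy continuity, i.e.\ so that Corollary \ref{coro:1-regular-basic} can be applied to the compact set $K$, where $1$-regularity genuinely holds, to absorb the Whitney-type seminorm. Beyond this, no new analytic input is needed — everything rests on Corollary \ref{coro:1-regular-basic} and Theorem \ref{thm:dual_swj} — and the remaining points (reduction to compact $A$, local finiteness of the closures of strata, compatibility of refinements) are routine bookkeeping.
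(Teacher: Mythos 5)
Your proof is correct in substance but follows a genuinely different route from the paper's. The paper does not re-prove any local duality: it reduces to $A$ compact via the closed neighborhoods $Z(A_\beta)$, uses softness of ${\mathcal S}{\mathcal D}b^*_{\{A_\alpha\}}$ to pass from stalks to global sections, and then simply quotes the identities \eqref{eq:dual-1-regular-1} and \eqref{eq:dual-1-regular-2} from the proof of Theorem \ref{thm:dual_swj} --- namely that $j^t_A\big((\Sswj(X))'\big)$ equals ${\mathcal S}{\mathcal D}b^*_{\{A_\alpha\}}(X)$ for \emph{every} 1-regular stratification and hence equals the projective limit ${\mathcal S}{\mathcal D}b^*_{[A]}(X)$; the ``in particular'' part is then immediate. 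You instead prove the stalk statement first by a direct cutoff-and-duality argument (essentially re-deriving Proposition \ref{prop:whitney-topology} locally), and then bootstrap to the sheaf identity by decomposing a germ of ${\mathcal S}{\mathcal D}b^*_{\{A_\alpha\}}$ over the finitely many closures $\overline{A}_{\alpha_i}$ meeting a neighborhood of $p$, applying the stalk statement to each compact 1-regular set $\overline{A}_{\alpha_i}$, and using the refinement argument to pass from $[\overline{A}_{\alpha_i}]$- to $[A]$-stratifiedness; that refinement argument is correct. Your route is more self-contained and makes the local character of the statement transparent, at the price of redoing functional analysis the paper already packaged into Theorem \ref{thm:dual_swj}. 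Three points should be made explicit to close it up: (a) the step ``$\chi u$ is supported in $K$, hence bounded by seminorms over $K$ only'' uses the Whitney--Kantor duality for the compact set $K$ itself together with the factorization $j^t_K=j^t_A\circ j^t_{K,A}$; (b) the extension by continuity from $\Swj\Volo(X)$ to $\Sswj\Volo(X)$ requires the density of the former in the latter, i.e.\ Proposition \ref{prop:dense-whitney}, which you should cite (Hahn--Banach alone would not guarantee that the extension still induces the ultradistribution $\chi u$ under $j^t_A$); (c) Corollary \ref{coro:1-regular-basic} is stated for a globally 1-regular $A$, so either apply it to $A\cap W''$ for a suitable neighborhood $W''$ of $\supp{\chi}$ on which $A$ is 1-regular, or justify your parenthetical remark that its proof only uses 1-regularity near $K$ --- both work, but one of them has to be said.
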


\begin{proof}
Set
  $
Z(A_\beta) := \underset{\overline{A}_\gamma \cap \overline{A}_\beta \ne \varnothing}
\cup \overline{A}_\gamma$.
  For any $p \in A_\beta$, since $Z(A_\beta)$ is a closed neighborhood
  of $p$ in $A$, we have
$$
\left({{\mathcal S}{\mathcal D}b^*_{\{A_\alpha\}}}\right)_p 
=
\left({{\mathcal S}{\mathcal D}b^*_{\{A_\alpha \cap Z(A_\beta)\}}}\right)_p, \qquad
\left({{\mathcal S}{\mathcal D}b^*_{[A]}}\right)_p
=
\left({{\mathcal S}{\mathcal D}b^*_{[A \cap Z(A_\beta)]}}\right)_p.
$$
Hence we may assume that $A$ is compact.
Since
$
{\mathcal S}{\mathcal D}b^*_{[A]} 
\subset
{\mathcal S}{\mathcal D}b^*_{\{A_\alpha\}},
$
it is enough to show that
$$
\left({{\mathcal S}{\mathcal D}b^*_{[A]}}\right)_p 
\hookrightarrow
\left({{\mathcal S}{\mathcal D}b^*_{\{A_\alpha\}}}\right)_p 
$$
is surjective.
By the softness of the sheaf ${\mathcal S}{\mathcal D}b^*_{\{A_\alpha\}}$,
$$
{\mathcal S}{\mathcal D}b^*_{\{A_\alpha\}}(X) \to 
\left({\mathcal S}{\mathcal D}b^*_{\{A_\alpha\}}\right)_p
$$
is surjective. Hence it is suf\mbox{}f\mbox{}icient to show $
{{\mathcal S}{\mathcal D}b^*_{\{A_\alpha\}}}(X) = {{\mathcal
    S}{\mathcal D}b^*_{[A]}}(X).  $ This follows from the equations
\eqref{eq:dual-1-regular-1} and \eqref{eq:dual-1-regular-2}.
\end{proof}

\begin{coro}\label{coro:decomposability_sdb}
  Let $X$ be a real analytic manifold, $A_1,A_2\subset X$ closed
  subanalytic sets. If $A_1 \cap A_2$ is 1-regular at $p \in X$, then
  the sequence
\begin{equation}
  \label{eq:sdb_closed_mv}
0 \to {\mathcal S}{\mathcal D}b^*_{[A_1 \cap A_2],p} \to
{\mathcal S}{\mathcal D}b^*_{[A_1],p}
\oplus {\mathcal S}{\mathcal D}b^*_{[A_2],p}
\to
{\mathcal S}{\mathcal D}b^*_{[A_1 \cup A_2],p}
\to 0
\end{equation}
is exact.
\end{coro}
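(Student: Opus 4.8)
The plan is to check exactness of \eqref{eq:sdb_closed_mv} at each of its three terms, working with germs at $p$. Recall first the two structural facts I will use: for any closed subanalytic $B\subset X$ there is a canonical injection ${\mathcal S}{\mathcal D}b^*_{[B]}\hookrightarrow\Gamma_B({\mathcal D}b^*)\hookrightarrow{\mathcal D}b^*$, and for closed subanalytic $B\subset B'$ every stratification of $B'$ restricts, after a common refinement, to a stratification of $B$, so that ${\mathcal S}{\mathcal D}b^*_{[B]}$ sits inside ${\mathcal S}{\mathcal D}b^*_{[B']}$ (this is the content of the morphisms already appearing in Lemma \ref{lemma:stratified-ultradistributions-exact}). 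In particular both maps in \eqref{eq:sdb_closed_mv} are the evident ones, $u\mapsto(u,u)$ and $(u_1,u_2)\mapsto u_1-u_2$, all terms being read inside ${\mathcal D}b^*_p$, and the composition of the two is visibly zero.

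Injectivity of the first map is immediate: its composite with the first projection is the inclusion ${\mathcal S}{\mathcal D}b^*_{[A_1\cap A_2],p}\hookrightarrow{\mathcal S}{\mathcal D}b^*_{[A_1],p}\hookrightarrow{\mathcal D}b^*_p$. Surjectivity of the second map is the stalk at $p$ of the surjective sheaf morphism of Lemma \ref{lemma:stratified-ultradistributions-exact} applied to the pair $A_1,A_2$ (replacing $u_2$ by $-u_2$ turns the sum into a difference, which is harmless). So it remains to establish exactness in the middle.

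Let $(u_1,u_2)\in{\mathcal S}{\mathcal D}b^*_{[A_1],p}\oplus{\mathcal S}{\mathcal D}b^*_{[A_2],p}$ lie in the kernel of the second map, i.e.\ $u_1$ and $u_2$ define the same germ $v\in{\mathcal D}b^*_p$. Near $p$, $u_1$ is supported in $A_1$ and $u_2$ in $A_2$, so $v$ is supported in $A_1\cap A_2$, that is $v\in\big(\Gamma_{A_1\cap A_2}({\mathcal D}b^*)\big)_p$. Here is the one place where the hypothesis is used: since $A_1\cap A_2$ is $1$-regular at $p$, Corollary \ref{coro:1-regular-equiv} gives $\big({\mathcal S}{\mathcal D}b^*_{[A_1\cap A_2]}\big)_p=\big(\Gamma_{A_1\cap A_2}({\mathcal D}b^*)\big)_p$, hence $v$ already lies in $\big({\mathcal S}{\mathcal D}b^*_{[A_1\cap A_2]}\big)_p$ and maps to $(u_1,u_2)$. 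This proves exactness in the middle and finishes the proof.

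The reason the argument is short is that the real work is packaged in Corollary \ref{coro:1-regular-equiv}, whose proof rests on the duality Theorem \ref{thm:dual_swj} and ultimately on the $1$-regular estimates of Corollary \ref{coro:1-regular-basic}; that identification of a germ of an ultradistribution supported on $A_1\cap A_2$ with a \emph{stratified} one is the only genuinely nontrivial ingredient, and it is exactly where $1$-regularity of $A_1\cap A_2$ at $p$ cannot be dropped. As recorded in the Remark following Lemma \ref{lemma:stratified-ultradistributions-exact}, without such a hypothesis the middle of \eqref{eq:sdb_closed_mv} is not exact in general, so this is the essential obstacle that the $1$-regularity assumption removes.
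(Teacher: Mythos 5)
Your proof is correct and follows essentially the same route as the paper's: injectivity is immediate, surjectivity is Lemma \ref{lemma:stratified-ultradistributions-exact}, and exactness in the middle comes from the support argument ${\mathcal S}{\mathcal D}b^*_{[A_1]}\cap{\mathcal S}{\mathcal D}b^*_{[A_2]}\subset\Gamma_{A_1\cap A_2}({\mathcal D}b^*)$ combined with the identification $\left(\Gamma_{A_1\cap A_2}({\mathcal D}b^*)\right)_p=\left({\mathcal S}{\mathcal D}b^*_{[A_1\cap A_2]}\right)_p$ from Corollary \ref{coro:1-regular-equiv}. You simply spell out the details (the nested inclusions of the stratified spaces and where $1$-regularity enters) that the paper leaves implicit.
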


\begin{proof}
The injectivity is clear, and the surjectivity comes from
Lemma \ref{lemma:stratified-ultradistributions-exact}.
The exactness of the middle follows from 
$$
{\mathcal S}{\mathcal D}b^*_{[A_1]}
\cap {\mathcal S}{\mathcal D}b^*_{[A_2]}
\subset
\Gamma_{A_1 \cap A_2}({\mathcal D}b^*)
$$
and
$
\Gamma_{A_1 \cap A_2}({\mathcal D}b^*)
={\mathcal S}{\mathcal D}b^*_{[A_1 \cap A_2]}
$
at $p$.
\end{proof}

Similarly to the exactness of \eqref{eq:swj_closed_mv}, the exactness of
\eqref{eq:sdb_closed_mv} holds if $\mathrm{dim}\,X \le 2$ without
the assumption of $1$-regularity on $A_1\cap A_2$.


If $A$ is a compact subanalytic, then ${\mathcal S}{\mathcal D}b^*_{[A]}(X)$ is
equipped with the strong dual topology of the locally convex topological vector space 
$\Sswj(X)$.
Then ${\mathcal S}{\mathcal D}b^*_{[A]}(X)$ is a {\bf{DFS}} (resp. an {\bf{FS}}) space
if $*=(s)$ (resp. $*=\{s\}$) respectively. 
Since ${\mathcal S}{\mathcal D}b^*_{[A]}(X)$ and $\Gamma_A(X, {\mathcal D}b^*)$
are reflexive,
$
{\mathcal S}{\mathcal D}b^*_{[A]}(X)
$
is dense in $\Gamma_A(X, {\mathcal D}b^*)$.

\section{Subanalytic sheaves on real surfaces}\label{section:2dim}

In this section we are going to study in detail the extension
properties of stratif\mbox{}ied Whitney jets on real surfaces. 

\subsection{On the exactnesses of \eqref{eq:swj_closed_mv} and
  \eqref{eq:sdb_closed_mv} in dimension 2}

Throughtout the subsection $X$ is a real analytic manifold of dimension $2$, unless
otherwise specified.

\begin{defs}
  Let $A\subset X$ be a closed subanalytic set. We say that a
  1-regular stratif\mbox{}ication $\{A_\alpha\}$ of $A$ is \emph{good} if
  every $\overline{A}_\alpha$ is topologically isomorphic to
  $D^{\operatorname{dim}_{\mathbb R} A_\alpha}$ as a topological
  manifold with the boundary, where $D^{k}$ denotes a closed unit disc
  in $\mathbb R^k$.
\end{defs}

By \cite{kurdyka}, for any stratif\mbox{}ication $\{A_\alpha\}$ of
$A$, there exists a good 1-regular stratif\mbox{}ication f\mbox{}iner
than $\{A_\alpha\}$.

\begin{lemma}{\label{lemma:outer-extension}}
  Let $A\subset X$ be a closed subanalytic set, $\{A_\alpha\}$ a good
  1-regular stratif\mbox{}ication of $A$.  For any $A_\alpha$ with
  $\operatorname{dim} A_\alpha = 2$, the restriction map
$$
{\mathcal S}{\mathcal W}^*_{X\setminus A_\alpha}(X) \to 
{\mathcal S}{\mathcal W}^*_{\overline{A}_\alpha\setminus A_\alpha}(X)
$$
is surjective.
\end{lemma}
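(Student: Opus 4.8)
The plan is to prove surjectivity by a local-to-global gluing argument, using the good 1-regular stratification to control the geometry near the two-dimensional stratum $A_\alpha$. Write $Z := \overline{A}_\alpha \setminus A_\alpha$, a closed subanalytic set of dimension $\le 1$, and let $F \in {\mathcal S}{\mathcal W}^*_{Z}(X)$ be given. Since $Z$ is the topological boundary of a closed disc $\overline{A}_\alpha$ (by goodness), it is 1-regular, so by Proposition \ref{prop:swj_A_=wj_A} and Theorem \ref{thm:w_norm_charact} the jet $F$ is, stratum by stratum, an honest Whitney jet of class $*$ on the relevant pieces of $Z$. The goal is to produce $G \in {\mathcal S}{\mathcal W}^*_{X \setminus A_\alpha}(X)$ restricting to $F$. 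I will build $G$ by first extending $F$ across a neighborhood on the ``outer'' side of $A_\alpha$ and then patching with a stratified Whitney jet that is already defined on all of $X \setminus A_\alpha$ away from a neighborhood of $A_\alpha$.

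First I would reduce to a local model: because $\{A_\alpha\}$ is good, a neighborhood of $\overline{A}_\alpha$ in $X$ is isomorphic to a neighborhood of the closed unit disc in $\mathbb R^2$, with $A_\alpha$ corresponding to the open disc and $Z$ to the circle; one then further reduces, using a subanalytic partition of the boundary $Z$ into the strata $A_\beta$ of dimension $<2$ contained in $\overline{A}_\alpha$, to extending across each boundary arc and each boundary point. On a single boundary arc, in suitable coordinates $A_\alpha$ sits on one side of a curve and $X \setminus \overline{A}_\alpha$ on the other; here the extension is the classical Whitney extension theorem for jets of class $*$ (available since the arc is 1-regular and the target region is 1-regular), giving a function $g \in \ms C^*$ on a one-sided neighborhood with $j_{\text{arc}}(g) = F$ there and $D^\gamma g = 0$ along $A_\alpha$. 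Second, I would glue these local extensions: cover a neighborhood of $A_\alpha$ by finitely many such model charts, use a $\ms C^*$ partition of unity subordinate to this cover (Gevrey partitions of unity exist — this is the content of Lemma \ref{lemma:partition-of-unity} referenced in the paper), and combine with the zero jet on the part of $X \setminus A_\alpha$ far from $\overline{A}_\alpha$. The result is a jet $G$ on $X \setminus A_\alpha$; the crucial point is that $G$ lies in the \emph{stratified} space, which is checked against the stratification $\{A_\beta\}_{A_\beta \ne A_\alpha}$ refined by the chart cover — on each such stratum $G$ is built from $\ms C^*$ functions and hence is a Whitney jet of class $*$ there, as required by Definition \ref{def:stratified-Whitney-jets}.

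The main obstacle I expect is the behavior of the extension at the corner/vertex points of the boundary circle $Z$, i.e.\ at the $0$-dimensional strata where two boundary arcs of $A_\alpha$ (or an arc of $\overline{A}_\alpha$ and an arc of the closure of an adjacent $2$-stratum) meet. Near such a point the set $X \setminus A_\alpha$ need not be 1-regular, so a naive one-sided Whitney extension is not immediately available; this is precisely why the hypothesis $\dim X = 2$ is being used, since in two dimensions the complement of a disc near a boundary point is still a finite union of 1-regular sectors and one can extend sector by sector and then reconcile the extensions on overlaps by absorbing the difference (which is flat to infinite order along the arcs) into the stratified structure. I would handle this by working on each sector separately, extending there by Whitney's theorem, and then using that the pairwise differences of these sector-extensions are jets that vanish to infinite order along the common arcs — hence lie in ${\mathcal S}{\mathcal W}^*$ of lower-dimensional strata — so the glued object still satisfies the stratified estimates. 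The bookkeeping of which stratification witnesses $G \in {\mathcal S}{\mathcal W}^*_{X \setminus A_\alpha}(X)$, and the uniformity of constants over the finitely many charts and sectors, is the technical heart; everything else is a packaging of the classical Whitney extension theorem for Gevrey classes together with Lemma \ref{lemma:stratified-Whitney-exact} and Corollary \ref{coro:1-regular-basic}.
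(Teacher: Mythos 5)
Your overall architecture (localize along $Z=\overline{A}_\alpha\setminus A_\alpha$ via a partition of unity, treat points of the one‑dimensional strata by direct Whitney extension, and isolate the vertex points as the real difficulty) is the same as the paper's. But at the vertex case, which is the heart of the lemma, your proposal has a genuine gap. You justify the sector decomposition by saying that ``in two dimensions the complement of a disc near a boundary point is still a finite union of 1-regular sectors.'' This is not the right reason and is false as a general principle: if the exterior region $D_\epsilon(p)\setminus A_\alpha$ were a cusp (i.e.\ if $A_\alpha$ were locally the complement of $\{0\le y\le x^m\}$), then no splitting of the cusp into pieces with 1-regular \emph{boundaries} exists, and in fact the extension itself is impossible --- by Proposition \ref{prop:decay-estimate}, any stratified Whitney jet on the cusp vanishing on one boundary arc must decay like $\exp(-l/x^m)$ on the other, exactly the obstruction exhibited in the Example of Section 2.2. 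What actually saves the day is the 1-regularity of $\overline{A}_\alpha$ (from the goodness of the stratification): it forces the \emph{exterior} angle between the tangent lines of the two boundary arcs $Z_1,Z_2$ at $p$ to be positive. That positive angle is what allows one to insert a straight segment $l$ from $p$ into $D_\epsilon(p)\setminus A_\alpha$, transverse to $Z_1$ and $Z_2$, splitting the exterior into two regions $W_1,W_2$ each of which is 1-regular \emph{with 1-regular boundary}. Your write-up never identifies this step, and without it the claim that the sectors are amenable to Whitney extension is unsupported.

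The second problem is your gluing mechanism at the vertex. ``Absorbing the difference of two sector-extensions into the stratified structure'' is not a meaningful operation (the difference is a jet on a two-dimensional overlap, not an element of a lower-dimensional ${\mathcal S}{\mathcal W}^*$), and a Gevrey partition of unity subordinate to sectors that pinch at $p$ has norms blowing up like $\exp(\varphi(\epsilon^{-\sigma}))$ (this is exactly Lemma \ref{lemma:partition-of-unity}), so a naive cutoff argument does not close without invoking the decay of $F$ near $p$. The paper sidesteps both issues: after reducing to $F|_{\{p\}}=0$ and $F|_{Z\cap\partial D_\epsilon(p)}=0$, it defines $F_i$ on $\partial W_i$ to be $F$ on $Z_i$ and the \emph{zero jet} on $l$ and the remaining boundary, extends each $F_i$ to $\varphi_i\in\ms C^*(X)$ by the Whitney extension theorem on the 1-regular set $\partial W_i$, and then the jets $j_{\overline{W}_1}(\varphi_1)$ and $j_{\overline{W}_2}(\varphi_2)$ agree (both vanish) on $\overline{W}_1\cap\overline{W}_2=l$, so they glue by Lemma \ref{lemma:stratified-Whitney-exact} with no partition of unity and no reconciliation of overlaps. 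You would need to replace your overlap/partition-of-unity step by something of this kind for the proof to go through.
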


\begin{proof} 
  Let $A_\alpha$ satisfy $\dim A_\alpha=2$.  Set $Z:=\overline{A}_\alpha
  \setminus A_\alpha$ and let $\{Z_\beta\}$ be the induced good
  1-regular stratif\mbox{}ication of $Z$.  For any $p$ and $\epsilon > 0$,
  $D_\epsilon(p)$ designates the closed disk with center $p$ and
  radius $\epsilon$. By the partition of unity, it is enough to show
  that for any $p \in Z$, there exists $\ep>0$ such that
$$
{\mathcal S}{\mathcal W}^*_{D_\epsilon(p) \setminus A_\alpha}(X) \to 
{\mathcal S}{\mathcal W}^*_{Z \cap D_\epsilon(p)}(X)
$$
is surjective. 

If $p \in Z_\beta$ with $\operatorname{dim}Z_\beta = 1$, then $Z \cap
D_\epsilon(p)$ is 1-regular for suf\mbox{}f\mbox{}iciently small
$\epsilon > 0$.  The result is clear in this case.

Suppose, now, $Z_\beta = \{p\}$. Since $Z$ is topologically trivial,
there exist only two strata $Z_1$ and $Z_2$ such that
$\operatorname{dim}{Z_i} = 1$ and $p \in \overline{Z}_i$ ($i=1,2,$). Let
$\epsilon > 0$ be such that $Z_1$ and $Z_2$ cross $\partial
D_\epsilon(p)$ transversally and any stratum other than $Z_1,Z_2$ and
$Z_\beta$ does not intersect with $D_\epsilon(p)$.

Since $\overline{A}_\alpha$ is 1-regular, the angle between the tangent
lines of $Z_1$ and $Z_2$ at $p$ in the side of $D_\epsilon(p)
\setminus A_\alpha$ is positive. Hence, if $\epsilon$ is
suf\mbox{}f\mbox{}iciently small, then there exists $q \in
\partial D_\epsilon(p) \setminus A_\alpha$ such that the segment $l$
from $p$ to $q$ is contained in $D_\epsilon(p) \setminus A_\alpha$,
and that $Z_i$ is not tangent to $l$ ($i=1,2$). One checks easily that
$\mathring{D_\ep(p)}\setminus(l\cup\overline Z_1\cup\overline Z_2)$ has three
connected components, one of whose is $\mathring A_\alpha\cap
B_\ep(p)$. Denote by $W_1$ and $W_2$ the other two connected
components. 
The sets $W_1$ and $W_2$ satisfy
\begin{enumerate}
\item $\overline W_1 \cap\overline W_2 = l$ and $\overline W_1 \cup\overline  W_2 = D_\epsilon(p) \setminus A_\alpha$,
\item the boundary $\partial W_i$ of $W_i$ consists of $Z_i$, $l$ 
and a part of the circle, in particular, 
$W_i$ and $\partial W_i$ are 1-regular ($i=1,2$).
\end{enumerate}

Let $F \in {\mathcal S}{\mathcal W}^*_{Z \cap D_\epsilon(p)}(X)$.  For
sake of simplicity, we assume that $F\vert_{Z_\beta} = 0$ and
$F\vert_{Z \cap \partial D_\epsilon(p)} = 0$. Then,
we define $F_i \in {\mathcal S}{\mathcal W}^*_{\partial W_i}(X)$
($i=1,2$) by
$$
F_i(x) := \left\{
\begin{matrix}
F(x) \qquad& \text{if }x \in Z_i \ , \\
0	     \qquad& \text{if }x \notin Z_i \ .
\end{matrix}
\right.
$$
Since $\partial W_i$ is 1-regular, we can f\mbox{}ind a function 
$\varphi_i(x) \in {\ms C}^*(X)$
such that $j_{\partial W_i}(\varphi_i) = F_i$.
Noticing 
$$
j_{\overline W_1} \varphi_1 \vert_{\overline W_1 \cap\overline  W_2} = j_{\overline W_2} \varphi_2 \vert_{\overline W_1 \cap\overline  W_2} =0 \ ,
$$
the jet
$$
G(x) := 
\left\{
\begin{matrix}
j_{\overline W_1}(\varphi_1) \qquad& x \in \overline W_1,\\
j_{\overline W_2}(\varphi_2) \qquad& x \in \overline W_2
\end{matrix}
\right.
$$
belongs to ${\mathcal S}{\mathcal W}^*_{D_\epsilon(p) \setminus A_\alpha}(X)$, and
$G(x) \vert_{Z \cap D_\epsilon(p)} = F$.
\end{proof}

By the similar arguments as in the proof of Lemma \ref{lemma:outer-extension},
we can also prove the following lemma.
\begin{lemma}{\label{lemma:dual-outer-extension}}
  Let $A\subset X$ be a closed subanalytic set, $\{A_\alpha\}$ a good
  1-regular stratif\mbox{}ication of $A$.  For any $A_\alpha$ with
  $\operatorname{dim} A_\alpha = 2$, we have
$$
{\mathcal S}{\mathcal D}b^*_{[\overline{A}_\alpha \setminus A_\alpha]}(X) =
{\mathcal S}{\mathcal D}b^*_{[X \setminus A_\alpha]}(X) \cap
\Gamma_{\overline{A}_\alpha \setminus A_\alpha}(X,\, {\mathcal D}b^*).
$$
\end{lemma}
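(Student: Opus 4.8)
The plan is to reduce the asserted equality to a germ-wise statement and then to run the geometric argument from the proof of Lemma~\ref{lemma:outer-extension}. Set $Z_1:=\overline{A}_\alpha\setminus A_\alpha$ and $Z_2:=X\setminus A_\alpha$; since $A_\alpha$ is open, $Z_1$ is a compact subanalytic subset of the closed subanalytic set $Z_2$. The inclusion ``$\subseteq$'' is clear: via the canonical injection ${\mathcal S}{\mathcal D}b^*_{[Z_1]}\hookrightarrow\Gamma_{Z_1}({\mathcal D}b^*)$ any section of ${\mathcal S}{\mathcal D}b^*_{[Z_1]}$ is supported in $Z_1$, and, extending a stratification of $Z_1$ to one of $Z_2$, it is stratified along $Z_2$ as well. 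For ``$\supseteq$'' I would take $u$ in the right-hand side; since ${\mathcal S}{\mathcal D}b^*_{[Z_1]}$ is a subsheaf of $\Gamma_{Z_1}({\mathcal D}b^*)$ and $u\in\Gamma_{Z_1}(X,{\mathcal D}b^*)$, it suffices to prove that the germ $u_p$ lies in $({\mathcal S}{\mathcal D}b^*_{[Z_1]})_p$ for every $p\in Z_1$ (the germ vanishes for $p\notin Z_1$). Fix $p\in Z_1$; then $u_p\in({\mathcal S}{\mathcal D}b^*_{[Z_2]})_p$ and $\operatorname{supp}u_p\subset Z_1$ near $p$. As $Z_1=\partial A_\alpha$ is a union of strata of the good $1$-regular stratification, either $p$ lies on a $1$-dimensional stratum --- in which case, exactly as at the beginning of the proof of Lemma~\ref{lemma:outer-extension}, $Z_1$ is $1$-regular near $p$, so Corollary~\ref{coro:1-regular-equiv} gives $({\mathcal S}{\mathcal D}b^*_{[Z_1]})_p=(\Gamma_{Z_1}({\mathcal D}b^*))_p\ni u_p$ --- or $p$ is itself a $0$-dimensional stratum, which is the case to be treated.

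So assume $p$ is a $0$-dimensional stratum and recall the local picture from the proof of Lemma~\ref{lemma:outer-extension}: for small $\epsilon>0$ there are exactly two $1$-dimensional strata $C_1,C_2$ of $Z_1$ with $p\in\overline{C}_i$, a straight segment $l$ issued from $p$ and not tangent to $C_1$ nor $C_2$, and open sets $W_1,W_2$ with $\overline{W}_1\cup\overline{W}_2=D_\epsilon(p)\setminus A_\alpha$, $\overline{W}_1\cap\overline{W}_2=l$, $\partial W_i\cap D_\epsilon(p)=(\overline{C}_i\cup l)\cap D_\epsilon(p)$, and with $\overline{W}_i$, $\partial W_i$, $l$, $\overline{C}_i$ all $1$-regular. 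Working inside $D_\epsilon(p)$, where $X\setminus A_\alpha$ coincides with $\overline{W}_1\cup\overline{W}_2$, I would apply Corollary~\ref{coro:decomposability_sdb} to the closed subanalytic sets $\overline{W}_1,\overline{W}_2$ (their intersection $l$ being $1$-regular) and write $u_p=u_1+u_2$ with $u_i\in({\mathcal S}{\mathcal D}b^*_{[\overline{W}_i]})_p$; since $\overline{W}_i$ is $1$-regular, Corollary~\ref{coro:1-regular-equiv} identifies $u_i$ with a germ of ultradistribution supported in $\overline{W}_i$. Because $\operatorname{supp}u_p\subset Z_1=\overline{C}_1\cup\overline{C}_2$ (near $p$) and $\operatorname{supp}u_2\subset\overline{W}_2$ are both disjoint from the open set $W_1$, restricting to $W_1$ forces $u_1|_{W_1}=0$, hence $\operatorname{supp}u_1\subset\overline{W}_1\setminus W_1=\partial W_1$, i.e. $\operatorname{supp}u_1\subset\overline{C}_1\cup l$ near $p$, and symmetrically $\operatorname{supp}u_2\subset\overline{C}_2\cup l$.

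It remains to remove the parasitic segment $l$. Using that $\partial W_1$ is $1$-regular, Corollary~\ref{coro:1-regular-equiv} gives $u_1\in({\mathcal S}{\mathcal D}b^*_{[\overline{C}_1\cup l]})_p$, and since $\overline{C}_1\cap l=\{p\}$ is $1$-regular, Corollary~\ref{coro:decomposability_sdb} applied to $\overline{C}_1$ and $l$ lets me split $u_1=a_1+b_1$ with $\operatorname{supp}a_1\subset\overline{C}_1$ and $\operatorname{supp}b_1\subset l$; likewise $u_2=a_2+b_2$ with $\operatorname{supp}a_2\subset\overline{C}_2$, $\operatorname{supp}b_2\subset l$. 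Then $b_1+b_2=u_p-a_1-a_2$ is supported in $l\cap(\overline{C}_1\cup\overline{C}_2)=\{p\}$, so $u_p=a_1+a_2+(b_1+b_2)$ is a sum of ultradistribution germs supported in $\overline{C}_1$, $\overline{C}_2$ and $\{p\}$ respectively, hence stratified along the $1$-regular stratification $\{C_1,C_2,\{p\}\}$ of $Z_1$ near $p$; by Corollary~\ref{coro:1-regular-equiv} this gives $u_p\in({\mathcal S}{\mathcal D}b^*_{[Z_1]})_p$, which finishes the proof. I expect the $0$-dimensional case to be the only real difficulty: one has to follow the geometry of Lemma~\ref{lemma:outer-extension} carefully, keep track of the several supports involved, and check that the part of $u$ living on the auxiliary segment $l$ can only be an ultradistribution concentrated at the single point $p$ --- which is harmless precisely because $\{p\}$ is one of the strata.
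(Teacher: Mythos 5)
Your proof is correct and follows exactly the route the paper intends: the paper gives no argument beyond the remark that the lemma follows ``by the similar arguments as in the proof of Lemma~\ref{lemma:outer-extension}'', and your write-up is a faithful elaboration of that, reusing the same local geometry ($W_1$, $W_2$, the non-tangent segment $l$) together with Corollaries~\ref{coro:1-regular-equiv} and~\ref{coro:decomposability_sdb} to peel the germ of $u$ at a $0$-dimensional stratum into pieces supported on $\overline{C}_1$, $\overline{C}_2$ and $\{p\}$. The only steps you take for granted --- the localization of the stalks $({\mathcal S}{\mathcal D}b^*_{[A]})_p$ to a closed neighborhood of $p$ in $A$, and the sheaf property underlying the germ-wise reduction --- are used in the same implicit way by the paper itself (e.g.\ in the proof of Corollary~\ref{coro:1-regular-equiv}), so no genuine gap remains.
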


\begin{theorem}\label{thm:close_mv_swj}
Let $Z_1,Z_2\subset X$ be closed subanalytic sets. The
  sequence
  \begin{equation}
    \label{eq:swj_closed_mv_surj}
0 \to {\mathcal S}{\mathcal W}^*_{Z_1 \cup Z_2}
\to 
{\mathcal S}{\mathcal W}^*_{Z_1} \oplus
{\mathcal S}{\mathcal W}^*_{Z_2} 
\to
{\mathcal S}{\mathcal W}^*_{Z_1 \cap Z_2} \to 0
\end{equation}
is exact.
\end{theorem}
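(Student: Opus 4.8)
The plan is to observe first that the injectivity of the second arrow and the exactness at ${\mathcal S}{\mathcal W}^*_{Z_1}\oplus{\mathcal S}{\mathcal W}^*_{Z_2}$ are already contained in Lemma \ref{lemma:stratified-Whitney-exact} (both $Z_1$ and $Z_2$ are closed in $Z_1\cup Z_2$), so that the whole content is the surjectivity of ${\mathcal S}{\mathcal W}^*_{Z_1}\oplus{\mathcal S}{\mathcal W}^*_{Z_2}\to{\mathcal S}{\mathcal W}^*_{Z_1\cap Z_2}$. This being a morphism of sheaves, I would check it on stalks: fix $p\in Z_1\cap Z_2$, write $V_i$ for the image of the restriction ${\mathcal S}{\mathcal W}^*_{Z_i,p}\to{\mathcal S}{\mathcal W}^*_{Z_1\cap Z_2,p}$, and note — using that the restriction of a stratified Whitney jet to a closed subset is again one — that surjectivity at $p$ is equivalent to $V_1+V_2={\mathcal S}{\mathcal W}^*_{Z_1\cap Z_2,p}$. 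Then I would fix, on a small relatively compact subanalytic neighbourhood of $p$, a good $1$-regular stratification $\{A_\alpha\}$ of $Z_1\cup Z_2$ refining the partition $\{Z_1\setminus Z_2,\,Z_1\cap Z_2,\,Z_2\setminus Z_1\}$, so that every $2$-dimensional stratum not contained in $Z_1\cap Z_2$ lies in $Z_1\setminus Z_2$ or in $Z_2\setminus Z_1$ and — crucially, because $\dim X=2$ — is an open subset of $X$ disjoint from the other of $Z_1,Z_2$.

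The strata along which $Z_1\cap Z_2$ is $2$-dimensional are harmless: there the three sets coincide up to lower dimension and are $1$-regular, so Proposition \ref{prop:swj_A_=wj_A} and Corollary \ref{coro:exact-1-regular} make the relevant restriction locally surjective, and a routine reduction (peeling off the closures of such cells via Lemma \ref{lemma:stratified-Whitney-exact}) brings us to the case $\dim_p(Z_1\cap Z_2)\le1$. In that situation the value $F_p=\{f_\alpha(p)\}$ of a germ $F\in{\mathcal S}{\mathcal W}^*_{Z_1\cap Z_2,p}$ is a Gevrey sequence (Theorem \ref{thm:w_norm_charact}, read on a closed $1$-regular arc through $p$), so some $\psi\in\ms C^*$ has $j_p(\psi)=F_p$; this $\psi$ will be used to make the jets agree at $p$ below.

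The main argument would be an induction on the number $n$ of $2$-dimensional strata of $\{A_\alpha\}$ whose closure contains $p$. For $n=0$ each of $Z_1,Z_2,Z_1\cap Z_2$ is, near $p$, a star of $1$-regular closed arcs through $p$; one extends $F$ to a jet on $Z_1$ by keeping $F$ on the arcs contained in $Z_1\cap Z_2$ and putting $j_\gamma(\psi)$ on the arcs $\gamma\subset Z_1\setminus Z_2$, and since each closed arc through $p$ is $1$-regular the result lies in ${\mathcal S}{\mathcal W}^*_{Z_1,p}$ — so $F_2=0$ works. For $n\ge1$ I would pick a $2$-cell $A$ with $p\in\overline A$, say $A\subset Z_1\setminus Z_2$ (the other case is symmetric), set $Z_1':=Z_1\setminus A=Z_1\cap(X\setminus A)$ (closed), and use $Z_1=Z_1'\cup\overline A$, $Z_1'\cap\overline A=\overline A\setminus A$, $Z_1\cap Z_2=Z_1'\cap Z_2$ together with the inductive hypothesis for $(Z_1',Z_2)$ — whose induced stratification has only $n-1$ such strata — to write $F=G_1'|_{Z_1\cap Z_2}-G_2|_{Z_1\cap Z_2}$ with $G_1'\in{\mathcal S}{\mathcal W}^*_{Z_1',p}$, $G_2\in{\mathcal S}{\mathcal W}^*_{Z_2,p}$. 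To turn $G_1'$ into a jet on $Z_1$ one must, by Lemma \ref{lemma:stratified-Whitney-exact} applied to $Z_1=Z_1'\cup\overline A$, extend $G_1'|_{\overline A\setminus A}$ across $\overline A$; doing this \emph{directly} can be impossible (for instance when $A$ is a cuspidal ``beak'', whose closure is $1$-regular but whose boundary is not, so that the boundary jet need not lie in ${\mathcal W}^*_{\overline A\setminus A}$). This is exactly where Lemma \ref{lemma:outer-extension} enters: since $A$ is disjoint from $Z_2$, I would instead extend $G_1'|_{\overline A\setminus A}$ to $H\in{\mathcal S}{\mathcal W}^*_{X\setminus A,p}$, subtract $H$ from \emph{both} $G_1'$ and $G_2$ (which leaves the difference on $Z_1\cap Z_2\subset X\setminus A$ unchanged, as $H|_{Z_1'}$ and $H|_{Z_2}$ restrict to the same thing there), and observe that $G_1'-H|_{Z_1'}$ now vanishes on $\overline A\setminus A$ and hence, again by Lemma \ref{lemma:stratified-Whitney-exact}, extends by zero over $\overline A$ to some $G_1\in{\mathcal S}{\mathcal W}^*_{Z_1,p}$; the pair $\bigl(G_1,\ G_2-H|_{Z_2}\bigr)$ then witnesses $F$, and the induction closes.

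I expect the delicate point to be precisely this last manoeuvre — replacing ``fill the $2$-cell $A$'' (false in general) by ``route the obstruction around $A$ onto the $Z_2$–side'' via Lemma \ref{lemma:outer-extension}, while keeping every restriction to $Z_1\cap Z_2$ consistent through the induction — together with the bookkeeping of the good stratification near $p$. The hypothesis $\dim X=2$ is essential here: in higher dimension the analogue of Lemma \ref{lemma:outer-extension} fails, and the surjectivity of the third arrow genuinely breaks down.
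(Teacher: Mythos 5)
Your proposal is correct and follows essentially the same route as the paper: reduce everything to the surjectivity of the third arrow via Lemma \ref{lemma:stratified-Whitney-exact}, induct on the strata of a good $1$-regular stratification refining $\{Z_1,Z_2,Z_1\cap Z_2\}$, and at each step use Lemma \ref{lemma:outer-extension} to extend the boundary jet of a top-dimensional cell to the complement of that cell, subtract this extension from \emph{both} pieces, and then extend by zero across the cell. The only differences are organizational (you argue stalkwise rather than on compact sets after localization, induct on the number of $2$-cells adjacent to $p$ rather than on the total number of strata, and pre-treat the $2$-dimensional part of $Z_1\cap Z_2$ separately, whereas the paper folds that case into the same induction).
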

\begin{proof}
  Since it is a local problem, we may assume that $X=\mathbb R^2$ and
  $Z_i$ is compact.  Set $Z:= Z_1 \cup Z_2$. Let $\{Z_\alpha\}_{\alpha
    \in \Lambda}$ be a good 1-regular stratif\mbox{}ication of $Z_1
  \cup Z_2$ f\mbox{}iner than the partition $\{Z_1 \cup Z_2,\, Z_1,\,
  Z_2,\, Z_1 \cap Z_2\}$. Note that $\Lambda$ is a f\mbox{}inite set.

  We will prove the assertion by induction of the cardinality of
  $\Lambda$.

  By Lemma \ref{lemma:stratified-ultradistributions-exact}, it is
  enough to show the exactness of the sequence
$$
{\mathcal S}{\mathcal W}^*_{Z_1}(X) \oplus
{\mathcal S}{\mathcal W}^*_{Z_2}(X) 
\to
{\mathcal S}{\mathcal W}^*_{Z_1 \cap Z_2}(X) \to 0.
$$

Let $\beta \in \Lambda$ be such that 
$$
\operatorname{dim} (Z_1 \cup Z_2) = \operatorname{dim} Z_\beta \ . 
$$
For $i=1,2$, set 
$$
Z_i' := Z_i \setminus Z_\beta \ .
$$
Note that $Z_i'$ is a closed subanalytic set and
\begin{equation}{\label{eq:boundary-xxx}}
Z_\beta \subset Z_i \Longrightarrow
Z_i' \cap \overline{Z}_\beta = \partial Z_\beta \qquad (i = 1,2)\ .
\end{equation}

The sequence
$$
{\mathcal S}{\mathcal W}^*_{Z'_1}(X) \oplus
{\mathcal S}{\mathcal W}^*_{Z'_2}(X) 
\to
{\mathcal S}{\mathcal W}^*_{Z'_1 \cap Z'_2}(X) 
={\mathcal S}{\mathcal W}^*_{(Z_1 \cap Z_2) \setminus Z_\beta}(X) \to 0
$$
is exact by the induction hypothesis.

Let $F \in {\mathcal S}{\mathcal W}^*_{Z_1 \cap Z_2}(X)$.
For sake of simplicity, we assume 
$$
F\vert_{\overline{Z}_{\beta}} = 0 \ .
$$
It follows from the above exact sequence that there exist $ F_1 \in
{\mathcal S}{\mathcal W}^*_{Z'_1}(X),\ F_2 \in {\mathcal S}{\mathcal
  W}^*_{Z'_2}(X)$ such that
$$
 F_1\vert_{(Z_1 \cap Z_2)\setminus Z_{\beta}} - F_2\vert_{(Z_1 \cap Z_2)\setminus Z_{\beta}}=F\vert_{(Z_1 \cap Z_2) \setminus Z_{\beta}} \ .
$$
Suppose $Z_\beta \subset Z_1$.  Then if $\operatorname{dim}(Z_\beta) =
2$, by Lemma \ref{lemma:outer-extension} there exists $\tilde{F} \in
\mathcal S\mathcal W^*_{(Z_1 \cup Z_2)\setminus Z_\beta}(X)$ such that
$$
\tilde{F}\vert_{\partial Z_\beta} = F_1 \vert_{\partial Z_\beta} \ .
$$
Moreover, if $\dim Z_\beta<2$, then $\partial Z_\beta$ consists of
isolated points, hence there exists $\tilde{F} \in \mathcal S\mathcal
W^*_{(Z_1 \cup Z_2)\setminus Z_\beta}(X)$ such that
$\tilde{F}\vert_{\partial Z_\beta} = F_1 \vert_{\partial Z_\beta}$.

Set
$$
\tilde{F}_1 := F_1 - \tilde{F}\vert_{Z_1'},\qquad
\tilde{F}_2 := F_2 - \tilde{F}\vert_{Z_2'} \ .
$$
Remark that 
$$
\tilde{F}_1\vert_{(Z_1 \cap Z_2)\setminus Z_{\beta}} - \tilde{F}_2\vert_{(Z_1 \cap Z_2)\setminus Z_{\beta}} = F\vert_{(Z_1 \cap Z_2)\setminus Z_{\beta}} \ .
$$ 

Taking (\ref{eq:boundary-xxx}) and $\tilde{F}_1\vert_{\partial
  Z_\beta} = 0$ into account, we can extend $\tilde{F}_1$ to an
element of ${\mathcal S}{\mathcal W}^*_{Z_1}(X)$ by zero (i.e. the
zero extension). Now, if $Z_\beta \cap Z_2 = \varnothing$, then
$Z_2=Z_2'$ and the result follows. Otherwise, suppose $Z_\beta \subset
Z_2$, then, since $\tilde{F}\vert_{\partial Z_\beta} =
\tilde{F_1}\vert_{\partial Z_\beta} = 0$, we have
$\tilde{F}_2\vert_{\partial Z_\beta} = 0$.  Hence $\tilde{F}_2$ is
regarded as an element in ${\mathcal S}{\mathcal W}^*_{Z_2}(X)$ by the
zero extension.

The conclusion follows.
\end{proof}

In general, for open subanalytic subsets $U_1$ and $U_2$, the sequence
$$
0 \to {\mathcal S}{\mathcal W}^*_{U_1 \cup U_2}
\to 
{\mathcal S}{\mathcal W}^*_{U_1} \oplus
{\mathcal S}{\mathcal W}^*_{U_2} 
\to
{\mathcal S}{\mathcal W}^*_{U_1 \cap U_2} \to 0
$$
is not exact, indeed the surjectivity does not hold.  The lack of
surjectivity is of topological nature and it comes from the fact that
$R{\mathcal H}om({\mathbb C}_{U_1 \cup U_2},\, {\mathbb C}_X)$ is not
necessarily concentrated in degree $0$.

\begin{coro}{\label{coro:ultra-exact}}
  Let $Z_1,Z_2\subset X$ be closed subanalytic sets. The
  sequence
$$
0 \to {\mathcal S}{\mathcal D}b^*_{[Z_1 \cap Z_2]}
\to 
{\mathcal S}{\mathcal D}b^*_{[Z_1]} \oplus
{\mathcal S}{\mathcal D}b^*_{[Z_2]} 
\to
{\mathcal S}{\mathcal D}b^*_{[Z_1 \cup Z_2]} \to 0
$$
is exact.
\end{coro}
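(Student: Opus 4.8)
The plan is to obtain the corollary by dualizing the exact sequence~\eqref{eq:swj_closed_mv_surj} of Theorem~\ref{thm:close_mv_swj}, along the lines of the proof of Theorem~\ref{thm:dual_swj}.

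First I would dispose of the easy parts. The f\mbox{}irst morphism, $u\mapsto(u,u)$, is injective because all three sheaves are subsheaves of $\mathcal Db^*$; and the surjectivity of the third morphism, $(u_1,u_2)\mapsto u_1-u_2$, is exactly Lemma~\ref{lemma:stratified-ultradistributions-exact}. Thus the whole content is the exactness in the middle, i.e.\ the identity $\mathcal S\mathcal Db^*_{[Z_1]}\cap\mathcal S\mathcal Db^*_{[Z_2]}=\mathcal S\mathcal Db^*_{[Z_1\cap Z_2]}$ as subsheaves of $\Gamma_{Z_1\cap Z_2}(\mathcal Db^*)$; unlike in Corollary~\ref{coro:decomposability_sdb}, no $1$-regularity of $Z_1\cap Z_2$ is available, so a direct argument seems hopeless and one must pass through the Whitney-jet side.

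Since exactness of sheaves is local, and since the stalk of $\mathcal S\mathcal Db^*_{[Z]}$ at a point depends only on the germ of $Z$ there — one replaces $Z$ by $Z\cap\overline W$ for a small relatively compact subanalytic neighbourhood $W$, using the softness of the sheaves $\mathcal S\mathcal Db^*_{\{A_\alpha\}}$ exactly as in the proof of Corollary~\ref{coro:1-regular-equiv} together with the exactness of f\mbox{}iltered colimits — I would reduce to the case $X=\mathbb R^2$ with $Z_1,Z_2$ compact subanalytic, and to proving exactness of
\[
0 \to \mathcal S\mathcal Db^*_{[Z_1\cap Z_2]}(X) \to \mathcal S\mathcal Db^*_{[Z_1]}(X)\oplus\mathcal S\mathcal Db^*_{[Z_2]}(X) \to \mathcal S\mathcal Db^*_{[Z_1\cup Z_2]}(X) \to 0 .
\]
To get this, apply the functor $\Gamma\bigl(X,\,\cdot\,\Volo\bigr)$ to \eqref{eq:swj_closed_mv_surj}: since $\mathcal S\mathcal W^*$ and $\mathcal W^*$ are soft, the resulting sequence of global sections is algebraically exact, and by Proposition~\ref{prop:whitney-topology} and Lemma~\ref{lemma:topology-whitney} its terms carry $\textbf{FS}$ topologies if $*=(s)$ and $\textbf{DFS}$ topologies if $*=\{s\}$, the morphisms being continuous, with the f\mbox{}irst one a topological embedding onto a closed subspace. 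Hence, by the open mapping theorem in the $\textbf{FS}$ case and by the bornological argument used in Lemma~\ref{lemma:topology-whitney} in the $\textbf{DFS}$ case, this is a \emph{strict} short exact sequence of locally convex spaces. Taking strong duals of a strict short exact sequence of $\textbf{FS}$ (resp.\ $\textbf{DFS}$) spaces yields a strict short exact sequence of $\textbf{DFS}$ (resp.\ $\textbf{FS}$) spaces; by Theorem~\ref{thm:dual_swj} its three terms are canonically $\mathcal S\mathcal Db^*_{[Z_1\cup Z_2]}(X)$, $\mathcal S\mathcal Db^*_{[Z_1]}(X)\oplus\mathcal S\mathcal Db^*_{[Z_2]}(X)$ and $\mathcal S\mathcal Db^*_{[Z_1\cap Z_2]}(X)$, and since duality reverses the arrows this is precisely the displayed sequence. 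Reading the reduction backwards gives exactness of the sheaf sequence.

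The main obstacle I anticipate is the functional-analytic ``strictness'' step: one must make sure that, after $\otimes_{\mathcal A}\mathcal V_X$ and $\Gamma(X,\cdot)$, the exact sequence of Theorem~\ref{thm:close_mv_swj} for compact $Z_i$ is \emph{topologically} strict in the category of $\textbf{FS}$/$\textbf{DFS}$ spaces, so that its strong dual remains exact — this is where the open mapping theorem and the careful choice of topology from Lemma~\ref{lemma:topology-whitney} do the real work. A secondary, more bookkeeping obstacle is the stalkwise reduction to the compact case, which must be carried out with the same care as in the proof of Corollary~\ref{coro:1-regular-equiv}.
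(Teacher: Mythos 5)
Your argument is correct in outline, but it is not the route the paper takes as its primary proof, and on one point it goes beyond what the paper is willing to claim. The paper's own proof handles the middle exactness \emph{directly}: after the same reduction to $X=\rea^2$ with $Z_1,Z_2$ compact, it reruns the inductive argument of Theorem \ref{thm:close_mv_swj} on the ultradistribution side, substituting Lemma \ref{lemma:dual-outer-extension} (the statement that ${\mathcal S}{\mathcal D}b^*_{[\overline{A}_\alpha\setminus A_\alpha]}(X)={\mathcal S}{\mathcal D}b^*_{[X\setminus A_\alpha]}(X)\cap\Gamma_{\overline{A}_\alpha\setminus A_\alpha}(X,{\mathcal D}b^*)$) for Lemma \ref{lemma:outer-extension}; no functional analysis is needed and the argument is uniform in $*$. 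Your duality argument is precisely the alternative the authors record at the end of their proof --- but they state it only for $*=(s)$, ``since all the vector spaces in \eqref{eq:swj_closed_mv_surj} have \textbf{FS} topologies.'' So the one genuinely delicate point in your proposal is the Roumieu case $*=\{s\}$: to dualize there you need (a) that $\mc{SW}^{\{s\}}_{Z_1\cup Z_2}(X)\to\oplus\,\mc{SW}^{\{s\}}_{Z_i}(X)$ is a topological embedding onto a closed subspace (this is exactly the bornological argument in Lemma \ref{lemma:topology-whitney}), (b) that the surjection onto $\mc{SW}^{\{s\}}_{Z_1\cap Z_2}(X)$ is open (open mapping theorem for \textbf{DFS} spaces), and (c) Hahn--Banach for the surjectivity of the transpose of the embedding. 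These ingredients are all available, so I believe your route closes, but you should supply those three checks explicitly rather than leaving strictness as an ``anticipated obstacle,'' and you should also verify that the algebraic identifications $j^t$ of Theorem \ref{thm:dual_swj} carry the transposed restriction maps to the inclusions ${\mathcal S}{\mathcal D}b^*_{[Z_1\cap Z_2]}(X)\hookrightarrow{\mathcal S}{\mathcal D}b^*_{[Z_i]}(X)$ and the sum map. Two minor remarks: your attribution of the surjectivity of the last arrow to Lemma \ref{lemma:stratified-ultradistributions-exact} is the right one (the paper's citation of Lemma \ref{lemma:sheafification} at that point is a slip), and your stalkwise reduction to compact $Z_i$ agrees with the paper's.
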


\begin{proof}
  Since it is a local problem, we may assume that $X=\mathbb R^2$ and
  $Z_i$ is compact subanalytic. It suffices to show the exactness of
  the sequence:
$$
0 \to {\mathcal S}{\mathcal D}b^*_{[Z_1 \cap Z_2]}(X)
\to 
{\mathcal S}{\mathcal D}b^*_{[Z_1]}(X) \oplus
{\mathcal S}{\mathcal D}b^*_{[Z_2]}(X) 
\to
{\mathcal S}{\mathcal D}b^*_{[Z_1 \cup Z_2]}(X) \to 0.
$$
Then the injectivity is clear, and the surjectivity 
follows from Lemma \ref{lemma:sheafification}. Using
Lemma {\ref{lemma:dual-outer-extension}} instead of
Lemma {\ref{lemma:outer-extension}},
we can prove the exactness of the middle by
the same argument as in the proof of Theorem \ref{thm:close_mv_swj}.

Note that, for the case $*=(s)$, the corollary can be also proved
by taking the dual of \eqref{eq:swj_closed_mv_surj} since all 
the vector spaces in \eqref{eq:swj_closed_mv_surj} have {\bf{FS}} topologies.
\end{proof}

\subsection{Stratif\mbox{}ied and tempered-stratif\mbox{}ied ultradistributions}

In this subsection, we assume that $X$ is a real analytic manifold with arbitrary dimension.
For $U\subset X$ a subanalytic open set, we define the set of
tempered-stratif\mbox{}ied ultradistributions as
$$
{\mathcal D}b^{*ts}_{X_{sa}}(U) :=
\frac{ {\mathcal S}{\mathcal D}b^*_{[X]}(X)}{ {\mathcal S}{\mathcal D}b^*_{[X \setminus U]}(X)} 
= \frac{ {\mathcal D}b^*(X)}{ {\mathcal S}{\mathcal D}b^*_{[X \setminus U]}(X)} 
\ .
$$

\begin{theorem}{\label{thm_qt_versus_t}}
Let $U$ be an open subanalytic subset of $X$.
\begin{enumerate}
\item The ring $\Gamma(\overline{U}, {\mathcal D}_X)$ acts on ${\mathcal D}b^{*ts}_{X_{sa}}(U)$.  
\item Let $V$ be an open subanalytic subset of $X$. Then we have the following exact sequence.
$$
{\mathcal D}b^{*ts}_{X_{sa}}(U \cup V) 
\to 
{\mathcal D}b^{*ts}_{X_{sa}}(U)  \oplus
{\mathcal D}b^{*ts}_{X_{sa}}(V) 
\to
{\mathcal D}b^{*ts}_{X_{sa}}(U \cap V) \to 0.
$$
Further, if $\operatorname{dim}_{\rea}(X) \le 2$, then the first morphism of the above sequence
is injective. Hence, in this case,
${\mathcal D}b^{*ts}_{X_{sa}}$ is a subanalytic sheaf
on $X_{sa}$ and a $\rho_! {\mathcal D}_X$ module.
\item If $X \setminus U$ is
  1-regular, then ${\mathcal D}b^{*ts}_{X_{sa}}(U)$ coincides
  with the sections of tempered ultra-distributions of class $*$ on
  $U$, that is,
$$
{\mathcal D}b^{*ts}_{X_{sa}}(U) =
\mathcal Db^{*t}_X(U) \ .
$$
\end{enumerate}
\end{theorem}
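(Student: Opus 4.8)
The plan is to reduce all three statements to properties of the subspace ${\mathcal S}{\mathcal D}b^*_{[X\setminus U]}(X)\subset{\mathcal D}b^*(X)$, bearing in mind that ${\mathcal D}b^{*ts}_{X_{sa}}(U)={\mathcal D}b^*(X)/{\mathcal S}{\mathcal D}b^*_{[X\setminus U]}(X)$ while ${\mathcal D}b^{*t}_X(U)={\mathcal D}b^*(X)/\Gamma_{X\setminus U}(X,{\mathcal D}b^*)$. First I would record two auxiliary facts. The first is monotonicity: for closed subanalytic $B\subset A$ one has ${\mathcal S}{\mathcal D}b^*_{[B]}\subset{\mathcal S}{\mathcal D}b^*_{[A]}$, obtained by refining a stratification of $B$ to one of $A$. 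The second, which is the only genuinely new ingredient, is a locality lemma: any $v\in{\mathcal D}b^*(X)$ with $\operatorname{supp}(v)\subset X\setminus\overline U$ already lies in ${\mathcal S}{\mathcal D}b^*_{[X\setminus U]}(X)$. To prove it I would cover the open subanalytic set $X\setminus\overline U$ by a locally finite family $\{W_j\}$ of $1$-regular open subanalytic sets (Proposition \ref{prop:kurdyka}(2)), note that each $\overline{W_j}$ is $1$-regular and contained in $\overline{X\setminus\overline U}\subset X\setminus U$, decompose $v$ by a $\ms C^*$-partition of unity subordinate to $\{W_j\}\cup\{X\setminus\operatorname{supp}(v)\}$, and observe that each summand is supported in some $\overline{W_j}$, hence lies in ${\mathcal S}{\mathcal D}b^*_{[\overline{W_j}]}(X)=\Gamma_{\overline{W_j}}(X,{\mathcal D}b^*)$ by Corollary \ref{coro:1-regular-equiv}, and therefore in ${\mathcal S}{\mathcal D}b^*_{[X\setminus U]}(X)$ by monotonicity.

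For (1), given $P\in\Gamma(\overline U,{\mathcal D}_X)$ I would pick an open $W\supset\overline U$ on which $P$ is defined and a cut-off $\chi\in\ms C^*(X)$ with $\chi\equiv 1$ near $\overline U$ and $\operatorname{supp}(\chi)\Subset W$, and set $P\cdot[u]:=[\,P(\chi u)\,]$, where $P(\chi u)\in{\mathcal D}b^*(X)$ is the extension by zero of the ultradistribution $P(\chi u|_W)$. The operations "multiply by $\chi$" and "apply $P$" send ${\mathcal S}{\mathcal D}b^*_{[X\setminus U]}(X)$ into itself — fix a $1$-regular stratification $\{A_\alpha\}$ of $X\setminus U$, decompose along it, and note both operations keep the summands supported in the same $\overline{A}_\alpha$ — so $P\cdot[u]$ depends only on $[u]$; and replacing $\chi$, or the chosen extension of $P$, changes $P(\chi u)$ by a section supported in $X\setminus\overline U$, which by the locality lemma does not affect the class. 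Compatibility with restriction maps being immediate, the presheaf $U\mapsto\Gamma(\overline U,{\mathcal D}_X)$ acts on ${\mathcal D}b^{*ts}_{X_{sa}}$.

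For (2), I would put $Z_1:=X\setminus U$, $Z_2:=X\setminus V$, so $Z_1\cap Z_2=X\setminus(U\cup V)$ and $Z_1\cup Z_2=X\setminus(U\cap V)$, and all maps of the Mayer--Vietoris triangle are the quotient maps of $\mathrm{id}_{{\mathcal D}b^*(X)}$, legitimate by monotonicity. Surjectivity at ${\mathcal D}b^{*ts}_{X_{sa}}(U\cap V)$ is clear. For exactness in the middle, if $[u_1]|_{U\cap V}=[u_2]|_{U\cap V}$ then $u_1-u_2\in{\mathcal S}{\mathcal D}b^*_{[Z_1\cup Z_2]}(X)$, and I would write $u_1-u_2=a+b$ with $a\in{\mathcal S}{\mathcal D}b^*_{[Z_1]}(X)$, $b\in{\mathcal S}{\mathcal D}b^*_{[Z_2]}(X)$ by the argument of Lemma \ref{lemma:stratified-ultradistributions-exact} carried out on global sections — split a $1$-regular stratification of $Z_1\cup Z_2$ refining $\{Z_1,Z_2\}$ into the strata lying in $Z_1$ and the rest, and use Corollary \ref{coro:1-regular-equiv}; then $u:=u_1-a=u_2+b$ maps to $([u_1]|_U,[u_2]|_V)$. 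When $\dim_{\mathbb R}X\le 2$, Corollary \ref{coro:ultra-exact} gives ${\mathcal S}{\mathcal D}b^*_{[Z_1]}\cap{\mathcal S}{\mathcal D}b^*_{[Z_2]}={\mathcal S}{\mathcal D}b^*_{[Z_1\cap Z_2]}$, so a class $[u]$ with $[u]|_U=[u]|_V=0$ satisfies $u\in{\mathcal S}{\mathcal D}b^*_{[Z_1\cap Z_2]}(X)$, i.e. $[u]|_{U\cup V}=0$; this is the injectivity of the first map. Since ${\mathcal D}b^{*ts}_{X_{sa}}(\varnothing)=0$ and a separated presheaf on $\xsa$ with gluing for two-element coverings glues for arbitrary coverings (these reduce to finite ones), ${\mathcal D}b^{*ts}_{X_{sa}}$ is a subanalytic sheaf, and by (1) it is a $\rho_!{\mathcal D}_X$-module.

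For (3), since $X\setminus U$ is closed subanalytic and, by hypothesis, $1$-regular at every point of $X$, Corollary \ref{coro:1-regular-equiv} gives ${\mathcal S}{\mathcal D}b^*_{[X\setminus U]}=\Gamma_{X\setminus U}({\mathcal D}b^*)$ as subsheaves of ${\mathcal D}b^*$ (equality of stalks at every point), so on sections over $X$ the two denominators agree and ${\mathcal D}b^{*ts}_{X_{sa}}(U)={\mathcal D}b^{*t}_X(U)$. The main obstacle, as I see it, is the locality lemma of the first paragraph on which the well-definedness of the ${\mathcal D}_X$-action in (1) rests; once it and the earlier exactness statements (Lemma \ref{lemma:stratified-ultradistributions-exact}, Corollaries \ref{coro:1-regular-equiv} and \ref{coro:ultra-exact}) are available, the rest is bookkeeping together with standard sheaf-theoretic formalism.
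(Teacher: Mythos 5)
Your proposal is correct and follows essentially the same route as the paper: part (1) rests on the same key fact that $\psi u\in{\mathcal S}{\mathcal D}b^*_{[X\setminus U]}(X)$ whenever $\operatorname{supp}\psi\cap\overline U=\varnothing$ (which the paper merely asserts and you additionally justify via a $1$-regular covering and Corollary \ref{coro:1-regular-equiv}), part (2) is the same Mayer--Vietoris diagram chase using Lemma \ref{lemma:stratified-ultradistributions-exact} and Corollary \ref{coro:ultra-exact}, and part (3) is the same appeal to Corollary \ref{coro:1-regular-equiv}. No gaps beyond those already present in the paper's own exposition.
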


\begin{proof}
\emph{1.} Let $W \supset \overline{U}$ be an open subset in $X$ and $P \in \Gamma(W,\, \mc D_X)$.
We choose a function $\varphi \in \ms C^*(X)$ with $\operatorname{supp}\varphi \subset W$ and
$\varphi(x)=1$ in a neighborhood of $\overline{U}$. Then $\varphi P$ can be considered as a
differential operator on $X$ with coefficients in $\ms C^*(X)$, and thus, 
it acts on $\mathcal Db^{*t}_X(U)$.
This action does not depend on a choice of $\varphi$. Indeed, this follows from the fact that,
for any $u \in {\mathcal D}b^*(X)$ and $\psi \in \ms C^*(X)$ with 
$\operatorname{supp}\psi \cap \overline{U} = \varnothing$, we have
$\psi u \in {\mathcal S}{\mathcal D}b^*_{[X \setminus U]}(X)$.

\

\noindent
\emph{2.} The exactness is an immediate consequence of the
following commutative diagram whose rows and columns are exact.
$$
\begin{matrix}
&
&  
&
& 0
&
& 0 \\
&
& 
&
& \downarrow
&
& \downarrow \\

&
&
&
& {\mathcal S}{\mathcal D}b^*_{[X \setminus U]}(X) \oplus
{\mathcal S}{\mathcal D}b^*_{[X \setminus V]}(X) 
&\to
&{\mathcal S}{\mathcal D}b^*_{[X \setminus (U \cap V)]}(X) 
&\to 
& 0 \\
&
& 
&
& \downarrow
&
& \downarrow \\
0 
&\to
&{\mathcal D}b^*(X)
&\to
& {\mathcal D}b^*(X) \oplus
{\mathcal D}b^*(X) 
&\to
&{\mathcal D}b^*(X) 
&\to 
& 0  \\
&
& \downarrow
&
& \downarrow
&
& \downarrow \\
&
&{\mathcal D}b^{*ts}_{X_{sa}}(U \cup V)
&\to
&{\mathcal D}b^{*ts}_{X_{sa}}(U) \oplus
{\mathcal D}b^{*ts}_{X_{sa}}(V) 
&\to
&{\mathcal D}b^{*ts}_{X_{sa}}(U\cap V) 
& 
& \\
&
& \downarrow
&
& \downarrow
&
& \downarrow \\
&
& 0
&
& 0
&
& 0
\end{matrix}
$$
The assertion for the case $\operatorname{dim} X \le 2$ comes from
Corollary {\ref{coro:ultra-exact}}.

\

\noindent
\emph{3.} follows from Corollary {\ref{coro:1-regular-equiv}}.
\end{proof}

\subsection{Higher dimensional case}

Let $\mathcal G$ be a subanalytic sheaf on $X_{sa}$ and denote by
$r_{V,U}$ the restriction map of ${\mathcal G}$ for $V \subset U$ open
subanalytic subsets. Assume that ${\mathcal G}$ satisfies the following
conditions.

\begin{enumerate}
\item If $U\in\Op(\xsa)$ has smooth boundary, then $\mc G(U)\simeq\mc
  Db^{*t}(U)$.
  In particular, ${\mathcal G}(X) = {\mathcal D}b^*(X)$.
\item For any $U\in\Op(\xsa)$, $r_U := r_{U, X}: {\mathcal D}b^*(X) =
  {\mathcal G}(X) \to {\mathcal G}(U)$ is surjective, i.e.  $\mathcal
  G$ is quasi-injective.
\end{enumerate}

Note that, since ${\mathcal G}$ is a sheaf in $X_{sa}$, 
for open subanalytic subsets $U$ and $V$, the sequence
$$
0\to {\mathcal G}(U \cup V) \to
{\mathcal G}(U) \oplus {\mathcal G}(V) \to {\mathcal G}(U\cap V)
$$
is exact.

If $\operatorname{dim}X = 2$, the sheaf ${\mathcal
  D}b^{*ts}_{X_{sa}}$ satisf\mbox{}ies the the conditions 1. and 2. above. Let us
prove that, if $\mathrm{dim}X>2$, then such a subanalytic sheaf $\mc
G$ does not exist.
For $Z$ a subanalytic closed subset of $X$, set
$$
{\mathcal F}(Z) := \operatorname{ker} ({\mathcal D}b^*(X) \to 
{\mathcal G}(X \setminus Z)) \subset {\mathcal D}b^*(X) \ .
$$

Then, for any closed subanalytic sets $Z_1 \subset Z_2\subset X$, there
exists an injective morphism 
$$
i_{Z_2, Z_1}: {\mathcal F}(Z_1) \hookrightarrow {\mathcal F}(Z_2)
$$
satisfying $i_{Z_3, Z_1} = i_{Z_3, Z_2} \circ i_{Z_2, Z_1}$. One
checks easily that the sequence
$$
0\to {\mathcal F}(Z_1 \cap Z_2) \to {\mathcal F}(Z_1) \oplus {\mathcal
  F}(Z_2) \to {\mathcal F}(Z_1\cup Z_2) \to 0
$$
is exact. 

Using the fact that $i_{Z_2, Z_1}$ is injective, and $ {\mathcal F}(Z)
\subset {\mathcal F}(X \setminus B) = \Gamma_{X \setminus B}(X,
\,{\mathcal D}b^*) $, for any open ball $B$ with $Z \cap B =
\varnothing$, it is easy to see that
$$
{\mathcal F}(Z) \subset \Gamma_Z(X, {\mathcal D}b^*) \ .
$$
Further, if $Z$ is a smooth manifold we have
$$
{\mathcal F}(Z) = \Gamma_Z(X, {\mathcal D}b^*) \ .
$$

Now let $Z_1,Z_2$ be two smooth hypersurfaces of $X$, from the
commutative diagram with exact rows
$$
\begin{matrix}
0 
&\to 
&{\mathcal F}(Z_1 \cap Z_2) 
&\to
&{\mathcal F}(Z_1) \oplus {\mathcal F}(Z_2) 
&\to 
&{\mathcal F}(Z_1\cup Z_2) 
\\
&
&\text{\rotatebox[origin=c]{270}{$\rightarrowtail$}}
&
&\text{\rotatebox[origin=c]{270}{$\simeq$}}
&
&\text{\rotatebox[origin=c]{270}{$\rightarrowtail$}}
\\
0 
&\to 
&\Gamma_{Z_1 \cap Z_2}(X,\, {\mathcal D}b^*)
&\to
& \Gamma_{Z_1}(X,\, {\mathcal D}b^*)
\oplus 
\Gamma_{Z_2}(X,\, {\mathcal D}b^*)
&\to 
&
\Gamma_{Z_1 \cup Z_2}(X,\, {\mathcal D}b^*) \ ,
\end{matrix}
$$
one obtains that
$$
{\mathcal F}(Z_1 \cap Z_2) = \Gamma_{Z_1\cap Z_2}(X,\, {\mathcal D}b^*) \ .
$$

If $\operatorname{dim}(X) > 2$, then we can f\mbox{}ind a pair of smooth
hypersurfaces $Z_1$ and $Z_2$ such that $Z_1 \cap Z_2$ consists of two
smooth curves $W_1$ and $W_2$ tangentially intersecting at $p$. For
example, let $X=\mathbb R^3$ with coordinates $(x,y,z)$, $Z_1 = \{z =
0\}$ and $Z_2 = \{z + y^2 - x^{2m} = 0\}$.  Then, since $Z_1 \cap Z_2
= W_1 \cup W_2$, we have the following exact sequence
\begin{equation}
  \label{eq:contradiction}
\begin{aligned}
\Gamma_{W_1}(X,\, {\mathcal D}b^*)
\oplus 
\Gamma_{W_2}(X,\, {\mathcal D}b^*)
&={\mathcal F}(W_1) \oplus {\mathcal F}({W_2}) \\
&\to
{\mathcal F}(W_1 \cup W_2) = \Gamma_{W_1 \cup W_2}(X,\, {\mathcal D}b^*)
\to 0.
\end{aligned}
\end{equation}

This gives a contradiction. Indeed, the exactness of
\eqref{eq:contradiction} implies that any ultra-distributions
supported on $W_1 \cup W_2$ is the sum of ultra-distributions
supported in $W_1$ or $W_2$. When $W_1$ and $W_2$ are tangent at
$p$, this is not true.

\appendix
\section{Super growth indicators}\label{section:sgi}

The aim of the appendix is to show Proposition {\ref{prop:dense-whitney}}.
To prove the proposition we need several lemmas and propositions. Their proofs
are given only for the case $*=(s)$ in this paper as those for $*=\{s\}$ 
can be done by the similar technique.

\begin{adefs}
  We say that a $\ms C^2$-function $\varphi(t):\rea_{\ge 0} \to \mathbb
  R$ is a \emph{super growth indicator} if 
$\displaystyle\lim_{t\to\infty} \varphi'(t) = \infty$ and 
$\varphi''(t) \ge 0$.  
Further, a function $\varphi(t):\rea_{\ge 0} \to \mathbb R$
is said to be a \emph{linear growth indicator} if there exist a positive
constant $h > 0$ and a constant $C$ such that $\varphi(t) = ht + C$.
\end{adefs}
\noindent
Note that a super growth indicator is a convex function.

Let $\varphi_1(t),\varphi_2(t)$ be two super growth indicators, we
write $\varphi_1 \ll \varphi_2$ if and only if there exists $C \in
\mathbb R$ such that, for any $t\in\rea_{\geq0},$ $\varphi_1(t) \le
\varphi_2(t) + C$.  

\begin{alemma}{\label{lemma:super-basic}}
\begin{enumerate}
\item Let $\varphi(x)$ be a super growth indicator. There exists
  $\{\gamma_k\}_{k\in\integergz}\subset\rea$ such that, for any
  $k\in\integergz$,
\begin{equation}{\label{eq:super-growth-reverse}}
\varphi(t) \ge kt + \gamma_k \ .
\end{equation}
\item For any $\{\gamma_k\}_{k\in\integergz}\subset\rea$, there exists
  a super growth indicator $\varphi(t)$ such that
\begin{equation}{\label{eq:super-growth}}
  \underset{k\ge 1}{\sup} \left( kt + \gamma_k \right) \ge \varphi(t).
\end{equation}
\end{enumerate}
\end{alemma}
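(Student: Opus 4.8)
The plan is to prove both parts by elementary convexity, using that a super growth indicator $\varphi$ is $\ms C^2$ with $\varphi'$ continuous, nondecreasing (as $\varphi'' \ge 0$) and $\varphi'(t) \to \infty$. For \eqref{eq:super-growth-reverse} I would invoke the fact that a convex function lies above each of its tangent lines: given $k \in \integergz$, choose $t_k \in \rea_{\ge 0}$ with $\varphi'(t_k) \ge k$ (possible since $\varphi'$ is continuous and tends to $+\infty$), and apply Taylor's formula with the Lagrange remainder,
$$
\varphi(t) = \varphi(t_k) + \varphi'(t_k)(t - t_k) + \tfrac12\varphi''(\xi)(t-t_k)^2 \ \ge\ \varphi'(t_k)\,t + (\varphi(t_k) - \varphi'(t_k)\,t_k) \qquad (t \ge 0),
$$
using $\varphi'' \ge 0$. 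Setting $\gamma_k := \varphi(t_k) - \varphi'(t_k)\,t_k$ and using $\varphi'(t_k) \ge k$ together with $t \ge 0$ gives $\varphi(t) \ge kt + \gamma_k$. This part is routine.

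For \eqref{eq:super-growth} I would write $\psi(t) := \sup_{k \ge 1}(kt + \gamma_k)$ and use only that $\psi(t) \ge nt + \gamma_n$ for every $n \ge 1$ and every $t \ge 0$. The idea is to first construct a polygonal convex minorant $g$ of $\psi$, with a fixed margin, whose slopes form the sequence $1, 2, 3, \dots$ over consecutive intervals, and then to smooth its corners. For $g$, I would choose recursively $s_0 := 0$, $s_n := \max(s_{n-1}+1,\ \gamma_n - \gamma_{n+1})$, and $c_1 := \gamma_1 - 1$, $c_{n+1} := c_n - s_n$; since $s_n \ge \gamma_n - \gamma_{n+1}$ one gets $c_n \le \gamma_n - 1$ by induction. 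Defining $g(t) := nt + c_n$ for $t \in [s_{n-1}, s_n]$, one checks that $g$ is continuous at each $s_n$, convex (the slopes increase), satisfies $g'(t) \to \infty$ (because $s_n \to \infty$), and fulfils $g(t) \le nt + \gamma_n - 1 \le \psi(t) - 1$ on the $n$-th interval, hence $g \le \psi - 1$ on $\rea_{\ge 0}$.

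To obtain a $\ms C^2$ function I would extend $g$ to $\rea$ by $g(t) := t + c_1$ for $t < 0$ (still convex) and set $\varphi := g \ast \eta$, with $\eta$ an even nonnegative $\ms C^\infty$ function supported in $(-\tfrac14,\tfrac14)$ and $\int \eta = 1$, then restrict to $\rea_{\ge 0}$. Since $g$ is convex, $\varphi'' = g'' \ast \eta \ge 0$; on $[s_{n-1}+\tfrac14,\, s_n - \tfrac14]$ (nonempty because $s_n - s_{n-1} \ge 1$) one has $\varphi' \equiv n$, and $\varphi'$ is nondecreasing, so $\varphi'(t) \to \infty$; and since every corner of $g$ has slope jump exactly $1$ and no two corners lie within $\tfrac12$ of each other, a short computation near the corners gives $\varphi \le g + \tfrac18 \le \psi$. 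Thus $\varphi$ is a super growth indicator lying below $\psi$. I expect this last smoothing step to be the only genuinely delicate point: one has to ensure the mollification does not raise $\varphi$ above $\psi$, which is exactly why the margin $-1$ was built into $g$, and to check that convexity and the divergence of $\varphi'$ survive the convolution; everything else is immediate.
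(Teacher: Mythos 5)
Your proof is correct; the paper itself dismisses this lemma with the single word ``Easy,'' so there is no argument to compare against, and your write-up simply supplies the standard details: part (i) is the supporting-line inequality for convex functions, and part (ii) is the usual construction of a piecewise-linear convex minorant with integer slopes followed by mollification. The one delicate point you flag — that convolving with an even kernel supported in $(-\tfrac14,\tfrac14)$ raises the polygonal $g$ by at most $\int x^-\eta(x)\,dx\le\tfrac18$ at each corner (slope jump $1$, corners separated by at least $1$) — checks out, so the built-in margin of $1$ is more than enough.
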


\begin{proof}
  Easy.
\end{proof}

Let $X$ be a real analytic manifold and $A$ a closed subanalytic set, and
let $A_\alpha$ be a stratum of a stratif\mbox{}ication $\{A_\alpha\}$ of $A$.

We set
$$
B(A_\alpha) := \underset{A_\alpha \subset \bar{A}_\beta}{\cup} \bar{A}_\beta \ .
$$

Clearly $B(A_\alpha)$ is a closed subanalytic subset of $X$.

\begin{aprop}{\label{prop:decay-estimate}}
  Let $X=\mathbb R^n$ and $A\subset X$ a closed subanalytic set with a
  1-regular stratif\mbox{}ication $\{A_\alpha\}$. 
  Assume $*=(s)$ $($resp. $*=\{s\})$. Then 
  for any $\alpha$ and any $F\in {\mathcal S}{\mathcal W}^{*}_A(X)$ with
  $F\vert_{\bar{A}_\alpha} = 0$, 
  there exists a super $($resp. linear$)$ growth indicator
  $\varphi_\alpha$ satisfying the following condition. For any $h >
  0$ $($resp. some $h > 0)$, there exists a constant $C_h$ such that, for any $ \epsilon >
  0$,
$$
\vert\vert F \vert\vert_{B(A_\alpha,\, \epsilon), s, h}
\le C_h \exp(-\varphi_\alpha(\epsilon^{-\sigma})) \ ,
$$
where $B(A_\alpha,\,\epsilon) = \{x \in B(A_\alpha);\, 
\operatorname{dist}(x, \bar{A}_\alpha) \le \epsilon\}$
and $\sigma = \frac{1}{s-1}$.
\end{aprop}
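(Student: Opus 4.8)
The plan is to bound each component $f_\gamma(y)$ of $F$ at points $y$ lying in one of the finitely many closed strata $\overline{A}_\beta$ with $A_\alpha\subset\overline{A}_\beta$ (these compact $1$-regular sets are exactly the pieces of $B(A_\alpha)$, and each contains $\overline{A}_\alpha$), and then to take the supremum over $\gamma$ and over $y\in B(A_\alpha)$ with $\operatorname{dist}(y,\overline{A}_\alpha)\le\epsilon$. I would fix such a $\beta$, a point $y\in\overline{A}_\beta$, and a point $y'\in\overline{A}_\alpha\subset\overline{A}_\beta$ realising $d_y:=\operatorname{dist}(y,\overline{A}_\alpha)=|y-y'|$ (if $d_y=0$ there is nothing to prove, $F$ being flat on $\overline{A}_\alpha$). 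Since $\overline{A}_\beta$ is $1$-regular and compact, for $d_y$ below a geometric constant there is a subanalytic curve $l\subset\overline{A}_\beta$ joining $y$ and $y'$ with $|l|\le\kappa d_y$; and since every component of $F$ vanishes on $\overline{A}_\alpha\ni y'$ we get $T_m(S_\gamma F;y,y')=0$, hence
$$f_\gamma(y)=R_m(S_\gamma F;y,y')\qquad\text{for every }m\ge0.$$
Finally, using Definition \ref{def:stratified-Whitney-jets} with the compact set $K=\overline{A}_\beta$ (and the remark that $A_\tau\cap K$ may be replaced by its closure in $A$) together with Theorem \ref{thm:w_norm_charact}, the jet $F|_{\overline{A}_\beta}$ is a finite union of Whitney jets of class $*$, so there is a constant $M_h$ with $|f_\delta(z)|\le M_h\,h^{|\delta|}|\delta|!^{s}$ for all $z\in\overline{A}_\beta$ and all $\delta$ — for every $h>0$ when $*=(s)$, and for some fixed $h=h_0$ when $*=\{s\}$.

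Next I would feed these two facts into Lemma \ref{lemma:Whitney-curve} applied to $S_\gamma F\in\mathcal S\mathcal W^*_A$:
$$|f_\gamma(y)|=|R_m(S_\gamma F;y,y')|\le\frac{(\sqrt{n}\,|l|)^{m+1}}{m!}\sup_{|\delta|=m+1,\,z\in l}|f_{\gamma+\delta}(z)|\le M_h\,\frac{(\sqrt{n}\kappa\,d_y)^{m+1}}{m!}\,h^{|\gamma|+m+1}(|\gamma|+m+1)!^{s}.$$
Using $(|\gamma|+m+1)!\le 2^{|\gamma|+m+1}|\gamma|!\,(m+1)!$ and choosing the input radius $h=h'/2^{s}$ in terms of the target radius $h'$ (for $*=\{s\}$ this forces $h'\ge 2^{s}h_0$, which is harmless since only "some $h$" is required there), one obtains, uniformly in $\gamma$,
$$\frac{|f_\gamma(y)|}{|\gamma|!^{s}\,h'^{|\gamma|}}\le M_{h'/2^{s}}\,\big(h'\sqrt{n}\kappa\,d_y\big)^{m+1}(m+1)^{s}\,m!^{s-1}\qquad(m\ge0).$$
Optimising over $m$ by the Stirling estimate already used in Lemma \ref{lemma_komatsu_prelemma}(ii) — which yields $\inf_{m\ge0}a^{m+1}(m+1)^{s}m!^{s-1}\le C\exp(-c\,a^{-\sigma})$ for small $a$, with $c=c(s)>0$ and $\sigma=\tfrac{1}{s-1}$ — then using $d_y\le\epsilon$, and the trivial bound $\|F\|_{B(A_\alpha,\epsilon),s,h'}\le\|F\|_{B(A_\alpha),s,h'}<\infty$ in the range where $\epsilon$ stays bounded below, one arrives at
$$\|F\|_{B(A_\alpha,\epsilon),s,h'}\le C'_{h'}\exp\!\big(-c\,(h'\sqrt{n}\kappa)^{-\sigma}\,\epsilon^{-\sigma}\big)\qquad\text{for all }\epsilon>0.$$

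When $*=\{s\}$ this already finishes the argument: $h'$ is now a fixed number, so $\varphi_\alpha(t):=c\,(h'\sqrt{n}\kappa)^{-\sigma}\,t$ is a linear growth indicator with positive slope and $C_h:=C'_{h'}$. When $*=(s)$ one still has to produce a single super growth indicator valid for all $h'$. The key point is that for each integer $k\ge1$ the choice $h'=h_0'(k):=(\sqrt{n}\kappa)^{-1}(c/k)^{1/\sigma}$ turns the displayed bound into $\|F\|_{B(A_\alpha,\epsilon),s,h_0'(k)}\le D_k\exp(-k\epsilon^{-\sigma})$ with $D_k:=C'_{h_0'(k)}$ depending only on $k$; since the seminorm $\|\cdot\|_{\,\cdot\,,s,h'}$ is non-increasing in $h'$, the same estimate holds for every $h'\ge h_0'(k)$. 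Applying Lemma \ref{lemma:super-basic}(2) to the sequence $\gamma_k:=-\log D_k$ produces a super growth indicator $\varphi_\alpha$ with $\varphi_\alpha(t)\le\sup_{k\ge1}(kt+\gamma_k)$, i.e. $\inf_{k\ge1}D_k e^{-kt}\le e^{-\varphi_\alpha(t)}$. For a prescribed $h'$ one then has $\|F\|_{B(A_\alpha,\epsilon),s,h'}\le\inf_{k\ge k_0(h')}D_k e^{-k\epsilon^{-\sigma}}$; for $\epsilon$ small this constrained infimum coincides with the unconstrained one (the minimising index of $D_k e^{-kt}$ grows to $\infty$ with $t=\epsilon^{-\sigma}$), so it is $\le e^{-\varphi_\alpha(\epsilon^{-\sigma})}$, while for $\epsilon$ bounded below the trivial bound is absorbed into $C_h$.

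The step I expect to be most delicate is this last paragraph: reconciling the fact that ever smaller Gevrey radii $h'$ are needed to witness ever faster decay rates with the requirement that $\varphi_\alpha$ be independent of $h$, and carefully tracking the regimes of validity ("$\epsilon$ small", "$a$ small") of the Stirling optimisation and of the quasiconvexity curve. A more routine but still necessary technical point is the splitting of $F|_{\overline{A}_\beta}$ into finitely many honest Whitney jets, which is what makes the pointwise Gevrey bounds $|f_\delta(z)|\le M_h h^{|\delta|}|\delta|!^{s}$ available on all of $\overline{A}_\beta$.
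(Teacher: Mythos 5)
Your proposal is correct and follows essentially the same route as the paper's own proof: reduce to the closed strata $\overline{A}_\beta\supset\overline{A}_\alpha$, use $1$-regularity to produce a short curve to a nearest point of $\overline{A}_\alpha$, exploit flatness to write $f_\gamma(y)=R_m(S_\gamma F;y,y')$, bound via Lemma \ref{lemma:Whitney-curve}, optimize over $m$ by Stirling, and in the Beurling case assemble the $h$-indexed family of exponential decays into one super growth indicator via Lemma \ref{lemma:super-basic}(2). Your explicit discretization of the radii into integer slopes $k$ and the monotonicity/constrained-supremum argument is just a spelled-out version of the paper's $\sup_{h>0}$ step, so the two proofs coincide in substance.
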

\begin{proof}
  We may assume that $A = B(A_\alpha)$, in particular, the number of
  the strata is f\mbox{}inite.  Since the closure of each stratum
  $A_\beta$ is 1-regular, connected and compact, we can f\mbox{}ind a
  constant $\kappa_\beta$ such that for any $x,y\in\bar{A}_\beta$
  there exists a subanalytic curve $l\subset\bar{A}_\beta$ joining $x$
  and $y$ such that
$$
\vert l \vert \le \kappa_\beta \vert x - y \vert.
$$
We set
$$
\kappa = \max \{\kappa_\beta\}.
$$
Let $y$ be a point in some stratum $A_\beta$. As $\bar A_\alpha$ is
compact, there exists $x\in\bar A_\alpha$ such that
$$
\operatorname{dist}(y, \bar{A}_\alpha) = \vert y - x \vert
$$
As $\bar A_\alpha\subset\bar A_\beta$, 
there exists a curve $l$ in $\bar{A}_\beta$ joining $x$ and $y$ such that 
$$
\vert l \vert \le \kappa \vert x - y \vert \ .
$$

Now, let $F=\{f_\alpha\}_\alpha\in {\mathcal S}{\mathcal
  W}^{(s)}_A(X)$ such that $F\vert_{\bar{A}_\alpha} = 0$. We have
that, for any $x\in\bar A_\alpha,y\in\bar A_\beta$,
$f_\gamma(y)=R_m(S_\gamma F; y, x)$. Hence,
from Lemma \ref{lemma:Whitney-curve}, it follows that
$$
\begin{aligned}
\vert f_\gamma(y)| = |R_m(S_\gamma F;\,y,\,x) \vert &\le 
\frac{(\sqrt{n}\vert l \vert)^{m+1}}{m!} \max_{\vert\eta\vert = \vert\gamma\vert + m+1}
\sup_{y \in l} \vert f_\eta(y) \vert \\
&\le
\frac{(\sqrt{n}\kappa\vert x - y \vert)^{m+1}}{m!} 
(\vert\gamma\vert + m + 1)!^s h^{\vert \gamma \vert + m + 1} 
\vert\vert F \vert\vert_{A, s, h} \\
&\le (2^{s+1}\sqrt{n}\kappa h\vert x - y \vert)^{m+1}{m!}^{s-1}
(\vert\gamma\vert)!^s (2^sh)^{\vert \gamma \vert} \vert\vert F \vert\vert_{A, s, h}.
\end{aligned}
$$
Hence for any $h'>0$ and $h>0$
$$
\frac{\vert f_\gamma(y) \vert}{\vert\gamma\vert!^s (2^sh')^{\vert\gamma\vert}}
\le \left(2^{s+1}\sqrt{n}\kappa h\vert x - y \vert\right)^{m+1}{m!}^{s-1}
\left(\frac{h}{h'}\right)^{\vert \gamma \vert} \vert\vert F \vert\vert_{A, s, h}.
$$

Since 
$$
\inf_{m \in \mathbb N} t^{m+1}{m!}^{s-1} \le C\exp\bigr{-Bt^{-\sigma}}
$$
for some positive constants $B$ and $C$, we obtained
$$
\frac{\vert f_\gamma(y) \vert}{\vert\gamma\vert!^s (2^sh')^{\vert\gamma\vert}}
\le C\vert\vert F \vert\vert_{A,s,h} \left(\frac{h}{h'}\right)^{\vert\gamma\vert}
\exp \left(-B'\left(\frac{1}{h}\right)^\sigma(\vert x - y\vert)^{-\sigma}\right)
$$
for a constant $B'>0$. 

By Lemma \ref{lemma:super-basic}, there exists a super growth
indicator $\varphi$ such that
$$
\sup_{h > 0}
\frac{1}{C\vert\vert F \vert\vert_{A,s,h}} 
\exp \left(B'\left(\frac{1}{h}\right)^\sigma t \right)
\ge \exp(\varphi(t)).
$$
Then, one checks easily that, for any $h' > 0$, there exists 
a constant $C_{h'} > 0$ such that:
$$
\sup_{h' > h > 0}
\frac{1}{C\vert\vert F \vert\vert_{A,s,h}} 
\exp \left(B'\left(\frac{1}{h}\right)^\sigma t \right)
\ge \frac{1}{C_{h'}}\exp(\varphi(t)).
$$
Therefore, we obtain
$$
\begin{aligned}
\frac{\vert f_\gamma(y) \vert}{\vert\gamma\vert!^s (2^sh')^{\vert\gamma\vert}}
&\le \inf_{0 < h < h'}
C\vert\vert F \vert\vert_{A,s,h} \left(\frac{h}{h'}\right)^{\vert\gamma\vert}
\exp \left(-B'\left(\frac{1}{h}\right)^\sigma(\vert x - y\vert)^{-\sigma}\right) \\
&\le \inf_{0 < h < h'}
C\vert\vert F \vert\vert_{A,s,h} 
\exp \left(-B'\left(\frac{1}{h}\right)^\sigma(\vert x - y\vert)^{-\sigma}\right) \\
&\le {C_{h'}}\exp(-\varphi(\vert x - y \vert^{-\sigma})).
\end{aligned}
$$
This entails the result.
\end{proof}

\begin{alemma}{\label{lemma:super-growth-exp}}
  For any constant $C>1$, $\epsilon > 0$ and any super growth
  indicator $\psi$, there exists a super growth indicator
  $\varphi$ satisfying the following conditions.
\begin{enumerate}
\item $\varphi(t) \ll \psi(t)$.
\item $\varphi'(0) > 0$ and $\varphi(0) = 0$.
\item for any $s,t \in [1,\infty)$, $\varphi(st) \le C s^{1+\epsilon}\varphi(t)$.
\end{enumerate}
\end{alemma}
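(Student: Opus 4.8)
The plan is to build $\varphi$ from its derivative, after two harmless reductions. Replacing $\psi$ by $\psi-\psi(0)$ (again a super growth indicator, and $\psi-\psi(0)\ll\psi$) we may assume $\psi(0)=0$; then $t\mapsto\psi(t)/t$ is non-decreasing on $(0,\infty)$, since $t\psi'(t)-\psi(t)=\int_0^t\big(\psi'(t)-\psi'(u)\big)\,du\ge0$, and it tends to $+\infty$ as $t\to\infty$. Fix also an auxiliary exponent $\epsilon_1\in(0,\epsilon)$ and put
$$
\chi(t):=\inf_{0<r\le t}\ \max\!\Big(\frac{\psi(r)}{r},\,1\Big)\Big(\frac{t}{r}\Big)^{\epsilon_1}\quad(t>0),\qquad \chi(0):=\lim_{t\to 0^+}\chi(t).
$$

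I would first record the elementary properties of $\chi$: it is continuous and non-decreasing; $\chi\ge1$ (take $r\to t$, using $(t/r)^{\epsilon_1}\ge1$); $\chi(t)\to\infty$ as $t\to\infty$ (if $\chi(t_n)\le M$ along some $t_n\to\infty$, near-minimizers $r_n$ satisfy $\max(\psi(r_n)/r_n,1)\le2M$, so the $r_n$ remain bounded and $(t_n/r_n)^{\epsilon_1}\to\infty$, a contradiction); the homogeneity bound $\chi(t)/\chi(r)\le(t/r)^{\epsilon_1}$ for $0<r\le t$, which is just the statement that $t\mapsto\chi(t)t^{-\epsilon_1}=\inf_{0<r\le t}\max(\psi(r)/r,1)r^{-\epsilon_1}$ is non-increasing; and $\chi(t)\le\max(\psi(t)/t,1)$, which with $\psi(u)/u\le\psi'(u)$ and the boundedness of $\{u:\psi(u)/u<1\}$ gives $\int_0^t\chi(u)\,du\le\psi(t)+C_0$ for a constant $C_0$, so that any primitive of $\chi$ is $\ll\psi$.

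Next comes the single analytic point. If $g\colon[0,\infty)\to[1,\infty)$ is non-decreasing and $g(t)/g(r)\le(t/r)^{\epsilon_1}$ for $0<r\le t$, then $G(t):=\int_0^t g$ satisfies $t\,G'(t)\le(1+\epsilon_1)\,G(t)$ for all $t>0$; indeed $g(u)\ge g(t)(u/t)^{\epsilon_1}$ for $0<u\le t$ gives
$$
G(t)=\int_0^t g(u)\,du\ \ge\ g(t)\,t^{-\epsilon_1}\!\int_0^t u^{\epsilon_1}\,du\ =\ \frac{t\,g(t)}{1+\epsilon_1},
$$
and, $G$ being positive on $(0,\infty)$, integrating $G'/G\le(1+\epsilon_1)/t$ from $t$ to $st$ yields $G(st)\le s^{1+\epsilon_1}G(t)\le s^{1+\epsilon}G(t)$ for $s,t\ge1$, i.e. condition $3.$ holds with constant $1\le C$. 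Applying this with $g=\chi$ would finish the proof, were $\chi$ not merely locally Lipschitz while a super growth indicator has to be $C^2$.

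The remedy, and the only technical part, is a backward mollification: $\widetilde\chi:=\chi*\rho_\eta$ with $\rho_\eta\ge0$ smooth, of integral $1$, supported in $[0,\eta]$ (extending $\chi$ by $\chi(0)$ to $(-\infty,0)$). Then $\widetilde\chi$ is $C^\infty$, non-decreasing, $\ge1$, tends to $\infty$, and $\widetilde\chi\le\chi$, so $\int_0^t\widetilde\chi\le\psi(t)+C_0$. From $\widetilde\chi(t)\le\chi(t)$ and $\widetilde\chi(r)\ge\chi(r-\eta)$ one gets $\widetilde\chi(u)\ge(1-\eta)^{\epsilon_1}\widetilde\chi(t)(u/t)^{\epsilon_1}$ for $1\le u\le t$; redoing the estimate above, splitting $\int_0^t=\int_0^1+\int_1^t$ and using $\widetilde\chi(t)\le\chi(1)t^{\epsilon_1}$ on the first piece, gives for all $t\ge1$
$$
t\,\widetilde\chi(t)\ \le\ \frac{1+\epsilon_1}{(1-\eta)^{1+\epsilon_1}}\int_0^t\widetilde\chi ,
$$
and since $\epsilon_1<\epsilon$ we pick $\eta$ small enough to make the prefactor $\le1+\epsilon$. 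Then $\varphi(t):=\int_0^t\widetilde\chi$ is a super growth indicator ($\varphi''=\widetilde\chi'\ge0$, $\varphi'=\widetilde\chi\to\infty$) with $\varphi(0)=0$, $\varphi'(0)=\widetilde\chi(0)>0$, $\varphi\ll\psi$, and $\varphi(st)\le s^{1+\epsilon}\varphi(t)\le Cs^{1+\epsilon}\varphi(t)$ for $s,t\ge1$. The obstacle I anticipate is exactly this mollification bookkeeping — keeping the homogeneity exponent strictly below $\epsilon$ and the constant in $3.$ at most the prescribed $C$; the conceptual core is the inf-convolution defining $\chi$ together with the identity $\int_0^t u^{\epsilon_1}\,du=t^{1+\epsilon_1}/(1+\epsilon_1)$.
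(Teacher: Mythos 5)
Your construction is correct, but it follows a genuinely different route from the paper's. The paper sets $\gamma=C^{1/5}>1$ and builds a piecewise\mbox{-}linear convex minorant $g$ of $\frac1M\psi$ whose slopes run through the geometric progression $\gamma^k$, choosing the breakpoints $x_k$ so that $x_{k+1}\ge\gamma^{1/\epsilon}x_k$ and $x_{k+1}\ge\hat x_{k+1}$ where $\psi'(\hat x_k)=M\gamma^k$; the submultiplicative estimate is then checked combinatorially by locating $t$ and $st$ in the subdivision and spending the factor $C=\gamma^5$ to absorb the various index losses. You instead take for $\varphi'$ the largest minorant of $\max(\psi(r)/r,1)$ with growth of order $t^{\epsilon_1}$ ($\epsilon_1<\epsilon$), produced by an inf\mbox{-}convolution, and deduce condition 3 from the differential inequality $t\varphi'(t)\le(1+\epsilon_1)\varphi(t)$, which integrates directly to $\varphi(st)\le s^{1+\epsilon_1}\varphi(t)$. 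Both proofs end with the same smoothing-by-convolution step. Your route buys two things: condition 3 comes out with constant $1$ (so the hypothesis $C>1$ is not actually needed), and the discrete bookkeeping of the $x_k$ is replaced by the one-line computation $\int_0^t u^{\epsilon_1}\,du=t^{1+\epsilon_1}/(1+\epsilon_1)$. The one delicate point, which you correctly isolate, is that the mollified $\widetilde\chi$ satisfies the homogeneity lower bound only for $u\ge1$; when you split the integral, the leftover term $\widetilde\chi(t)t^{-\epsilon_1}/(1+\epsilon_1)\le\chi(1)/(1+\epsilon_1)$ is only cancelled by $\int_0^1\widetilde\chi\ge\chi(1)(1-\eta)^{1+\epsilon_1}/(1+\epsilon_1)$ up to an $O(\eta)$ defect, which must then be absorbed into $t\widetilde\chi(t)\ge1$ at the cost of a further factor $1-O(\eta)$ --- so your displayed constant $\frac{1+\epsilon_1}{(1-\eta)^{1+\epsilon_1}}$ is not literally the one that comes out, but the corrected prefactor still tends to $1+\epsilon_1<1+\epsilon$ as $\eta\to0$, and the argument closes. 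All the remaining requirements ($\varphi\in\mathscr{C}^2$, $\varphi''=\widetilde\chi'\ge0$ since $\chi$ is non-decreasing, $\varphi'(0)=\chi(0)\ge1$, $\varphi\ll\psi$ via $\chi(u)\le\max(\psi(u)/u,1)\le\max(\psi'(u),1)$) are verified as you state.
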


\begin{proof}
Note that the third condition is equivalent to

$$
\varphi(s) \le C \left(\frac{s}{t}\right)^\epsilon
\frac{\varphi(t)}{t}s\qquad s \ge t \ge 1 \ .
$$

Set 
$$
	\gamma = C^{\frac{1}{5}} > 1 \ .
$$

Without loss of generality, we assume that $\psi(0) = 0$ and
$\psi'(0) = M$, for some constant $M>1$.  Let
$\{\hat{x}_k\}_{k\in\integergz} \subset [0,\infty)$ satisfy:
\begin{enumerate}
\item $0=\hat{x}_0 < \hat{x}_1 < \hat{x}_2 < \dots$ and 
$\displaystyle\lim_{k\to\infty} \hat{x}_k = \infty$.
\item $\psi'(\hat{x}_k) = M\gamma^{k}$.
\end{enumerate}

Given an increasing sequences $\{x_k\}_{k\in\integer{\geq0}}\subset\rea$ with $x_0 = 0$ and
$\displaystyle\lim_{k\to\infty} x_k = \infty$, we set
$$
g(x) := \sum_{i=0}^{k-1} \gamma^i (x_{i+1} - x_i) + \gamma^k(x - x_k)\qquad
x \in [x_k, x_{k+1}].
$$

Let $\{x_k\}_{k\in\integer{\geq0}}$ be an increasing sequence of real
numbers satisfying the following conditions:
\begin{enumerate}
\item $x_0 = 0$ and $x_1 \ge 2$,
\item $x_{k+1}  \ge \max \{\gamma^{\frac{1}{\epsilon}}x_k,\, \hat{x}_{k+1}\}$ for 
$k\in\nat$,
\item $g(x_{k+1}) \ge \gamma^{k-1} x_{k+1}$ for $k\in\nat$.
\end{enumerate}

Note that such a sequence $\{x_k\}_{k}$ always exists as, for f\mbox{}ixed
$k$, we have
$$
\lim_{x \to \infty}
\frac{\displaystyle\sum_{i=0}^{k-1} \gamma^i (x_{i+1} - x_i) + \gamma^k(x - x_k)}{x} = \gamma^k.
$$

Clearly, if $t\leq s$, then $\bigr{\frac st}^\ep\geq1>\gamma^{-2}$.
More precisely, for $0 < t \le s$ with $t \in [x_l, x_{l+1}]$ and $s
\in [x_k, x_{k+1}]$ ($l<k$), we easily obtain
$$
\left(\frac{s}{t}\right)^\epsilon 
\ge
\left(\frac{x_k}{x_{l+1}}\right)^\epsilon 
\ge \gamma^{k-l-2}.
$$

Moreover, since the function $g(t)$ is convex with $g(0) = 0$, the
function $\frac{g(t)}{t}$ is an increasing function of $t$. Hence,
for any $t \in [x_l, x_{l+1}]$ ($1 \le l$), we have
$$
\frac{g(t)}{t} \ge \frac{g(x_l)}{x_l} \ge \gamma^{l-2}.
$$

It follows that, for $0 < t \le s$, $s\in [x_k, x_{k+1}]$,
$$
\gamma^k =
\gamma^4 \gamma^{k-l-2} \gamma^{l-2} \le
\frac{1}{\gamma} C \left(\frac{s}{t}\right)^\epsilon \frac{g(t)}{t},
$$
and 
$$
g(s) = \sum_{i=0}^{k-1} \gamma^i (x_{i+1} - x_i) + \gamma^k(s - x_k)
\le \gamma^k s \le \frac{1}{\gamma} C
\left(\frac{s}{t}\right)^\epsilon \frac{g(t)}{t} s \ .
$$

Since $\psi(s)$ is an increasing convex function, we have
$$
\begin{aligned}
g(s) &= \sum_{i=0}^{k-1} \gamma^i (x_{i+1} - x_i) + \gamma^k(s - x_k) \\
&=
\sum_{i=0}^{k-1} \frac{\psi'(\hat{x}_{i})}{M} (x_{i+1} - x_i) + 
\frac{\psi'(\hat{x}_{k})}{M}(s - x_k) \\
&\le
\frac{1}{M} \left(\sum_{i=0}^{k-1} \psi'(x_i)(x_{i+1} - x_i) + 
\psi'(x_k)(s - x_k)\right) \\
&\le \frac{1}{M}\psi(s) \ .
\end{aligned}
$$

Hence, the continuous convex increasing function $g(s)$ satisf\mbox{}ies:
$$
g(st) \le \frac{1}{\gamma} C s^{1+\epsilon}g(t) \qquad
\text{for } s \in [1,\infty)\text{ and } t \in [0,\infty)
$$
and
$$
g(t) \le \frac{1}{M}\psi(t).
$$

Let $\chi_h(x)$ be a non-negative $\cinfty(\rea)$ function such that
$$
\operatorname{supp}(\chi_h(x)) \subset \{\vert x \vert \le h\}
\qquad \text{and }
\int_{\mathbb R} \chi_h(x) dx = 1.
$$

For $0 < h < 1$, set
$$
\varphi_h(t) = 
\left\{
\begin{array}{lc}
\ t & 0 \le t \le 1 \\
\displaystyle\int_{\rea} g(t-x) \chi_h(x) dx \qquad & t \ge 1
\end{array}
\right.
$$

Then we have $\varphi_h \in\cinfty(\rea)$ and
$$
g(t) \le \varphi_h(t) \le g(t+h).
$$

Hence, we obtain
\begin{equation}
  \label{eq:phih}
\varphi_h(st -h)
\le
g(st)
\le
\frac{1}{\gamma}C s^{1+\epsilon} g(t)
\le
\frac{1}{\gamma}C s^{1+\epsilon} \varphi_h(t).
\end{equation}
Now, we replace $s$ by $(1+h)s$ into \eqref{eq:phih}. Then we get, for
suf\mbox{}f\mbox{}iciently small $h>0$ and $s,t \in [1,\infty)$,
$$
\varphi_h(st) \le
\varphi_h((1+h)st - h) \le \frac{(1+h)^{1+\epsilon}}{\gamma}C s^{1+\epsilon}\varphi_h(t)
\le C s^{1+\epsilon}\varphi_h(t) \ .
$$

Further, it follows from $g(t) \le \frac{1}{M}\psi(t)$ that for
suf\mbox{}f\mbox{}iciently small $h>0$ and $t \ge 1$ we have
$$
\varphi_h(t) \le \psi(t).
$$

The conclusion follows.
\end{proof}

\begin{alemma}{\label{lemma:holmorphic-super}}
Let $\sigma > 0$. For any super growth indicator $\varphi(t)$,
there exist a holomorphic function $p(\xi)$ on $\mathbb C \setminus \rea_{\le0}$ and a super growth 
indicator $\rho(t)$ satisfying the following conditions.
\begin{enumerate}
\item There exists $C>0$ such that, for any $\xi \in \mathbb C \setminus \rea_{\le0}$,
\begin{equation}{\label{eq:entire-estimate-1}}
\vert p(\xi) \vert \le 
C\exp\left(\varphi(\vert \xi \vert ^\sigma)\right) 
\end{equation}
holds. Moreover $p(\xi)$ is real valued for
$\xi \in \mathbb R_{>0}$.
\item The inequality
\begin{equation}{\label{eq:entire-estimate-2}}
\exp\left(\rho(\vert \xi \vert^\sigma)\right) \le 
\vert p(\xi) \vert 
\end{equation}
holds for any
$
\xi \in \left\{\xi = re^{i\theta} \in \mathbb C;\, r > 0,\, 
\vert \theta \vert < \min\left\{ \frac{\pi}{4},\, \frac{\pi}{8\sigma}\right\}\right\}.
$
\end{enumerate}
\end{alemma}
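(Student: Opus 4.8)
The plan is to realise $p$ as a Laplace--Borel type integral. First I would normalise $\varphi$ by means of Lemma~\ref{lemma:super-growth-exp}. Since in \eqref{eq:entire-estimate-2} the super growth indicator $\rho$ is not prescribed, while replacing $\varphi$ by a smaller super growth indicator only strengthens \eqref{eq:entire-estimate-1}, it suffices to prove the lemma for a super growth indicator $\varphi_1$ with $\varphi_1(0)=0$, $\varphi_1'(0)>0$, $\varphi_1''>0$, and $\varphi_1(st)\le C s^{1+\epsilon}\varphi_1(t)$ for all $s,t\ge 1$, where $\epsilon>0$ is small and $C>1$ is close to $1$; such a $\varphi_1$, moreover with $\varphi_1(2u)\le\varphi(u)+\mathrm{const}$, is furnished by Lemma~\ref{lemma:super-growth-exp} applied to the super growth indicator $t\mapsto\tfrac12\varphi(t/2)$. (Strict convexity, and the mild regularity of $\varphi_1''$ used below — namely that $\varphi_1''$ decays at most polynomially, or at worst at a rate adapted to $\varphi$ — are arranged by a harmless preliminary perturbation of $\varphi_1$ by a suitable tiny strictly convex super growth indicator.) It is then enough to produce $p$ with $|p(\xi)|\le C'\exp\!\big(\varphi_1(2|\xi|^\sigma)\big)$ on $\com\setminus\rea_{\le0}$, real on $\rea_{>0}$, and with $|p(\xi)|\ge\exp\!\big(\lambda\varphi_1(|\xi|^\sigma)-C''\big)$ on the sector of \eqref{eq:entire-estimate-2} for some $\lambda>0$; one then takes $\rho:=\lambda\varphi_1-C''$.

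Next I would fix $g$, the Legendre transform of $\varphi_1$ (a convex superlinear function, $\equiv 0$ on $[0,\varphi_1'(0)]$, of class $C^2$ with $g''>0$ beyond that interval, and whose Legendre transform is again $\varphi_1$), and set
$$p(\xi):=\int_0^\infty \exp\!\big(\xi^\sigma t-g(t)\big)\,dt,$$
$\xi^\sigma$ being the principal branch on $\com\setminus\rea_{\le0}$. Because $g$ is superlinear the integral converges locally uniformly on $\com\setminus\rea_{\le0}$, so $p$ is holomorphic there, and $p$ is real on $\rea_{>0}$. For the upper bound, $\operatorname{Re}(\xi^\sigma)\le|\xi|^\sigma$ gives $|p(\xi)|\le\int_0^\infty\exp(|\xi|^\sigma t-g(t))\,dt$; using Young's inequality $|\xi|^\sigma t-g(t)\le\varphi_1(|\xi|^\sigma)$ on an interval $[0,T]$ with $T=T(|\xi|)$ polynomially large, together with the superlinear decay of $g$ beyond $T$, one obtains $|p(\xi)|\le(\operatorname{poly}(|\xi|))\exp(\varphi_1(|\xi|^\sigma))$; the polynomial factor is absorbed because $\varphi_1(2u)-\varphi_1(u)\ge u\varphi_1'(u)\to\infty$, and \eqref{eq:entire-estimate-1} follows from $\varphi_1(2u)\le\varphi(u)+\mathrm{const}$.

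For the lower bound I would note that $\xi\mapsto\xi^\sigma$ maps $\{\,|\arg\xi|<\min(\pi/4,\pi/(8\sigma))\,\}$ into the fixed sector $\{\,|\arg\zeta|<\pi/8\,\}$, so it suffices to bound $q(\zeta):=\int_0^\infty\exp(\zeta t-g(t))\,dt$ from below there. Writing $\zeta=\omega e^{i\beta}$ with $|\beta|<\pi/8$, the real part of the exponent $(\omega\cos\beta)t-g(t)$ has a unique maximum at the real point $t_*$ with $g'(t_*)=\omega\cos\beta$, of value $\varphi_1(\omega\cos\beta)$ and second derivative $-1/\varphi_1''(\omega\cos\beta)$. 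A steepest--descent estimate — whose oscillatory part reduces, via the exact complex Gaussian $\int e^{-a\tau^2+ib\tau}d\tau=\sqrt{\pi/a}\,e^{-b^{2}/(4a)}$, to controlling tails where convexity of $g$ forces exponential smallness — yields
$$|q(\zeta)|\ \gtrsim\ \sqrt{\varphi_1''(\omega\cos\beta)}\,\exp\!\Big(\varphi_1(\omega\cos\beta)-\tfrac12(\omega\sin\beta)^2\,\varphi_1''(\omega\cos\beta)\Big).$$
Here $\varphi_1(\omega\cos\beta)\ge C^{-1}(\cos\beta)^{1+\epsilon}\varphi_1(\omega)$ by the scaling property; the phase loss obeys $(\omega\sin\beta)^2\varphi_1''(\omega\cos\beta)=\tan^2\beta\cdot(\omega\cos\beta)^2\varphi_1''(\omega\cos\beta)\le c\,\tan^2\beta\,\varphi_1(\omega)$ with $c$ depending only on $(C,\epsilon)$ (again from $\varphi_1(st)\le Cs^{1+\epsilon}\varphi_1(t)$, which bounds $\omega^2\varphi_1''(\omega)$ by a controlled multiple of $\varphi_1(\omega)$); and $\sqrt{\varphi_1''(\omega\cos\beta)}$ is only polynomially small by the regularity arranged in the preliminary step. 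Choosing $(C,\epsilon)$ so that $C^{-1}(\cos\beta)^{1+\epsilon}-\tfrac c2\tan^2\beta\ge\lambda>0$ for all $|\beta|\le\pi/8$, we get $|p(\xi)|=|q(\xi^\sigma)|\ge\exp(\lambda\varphi_1(|\xi|^\sigma)-C'')$ on the required sector.

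The main obstacle is making this last estimate rigorous and uniform in $\xi$: one must isolate a neighbourhood of $t_*$ of the right width (a fixed multiple of $\sqrt{\varphi_1''(\omega\cos\beta)}$) on which $g''$ is essentially constant, estimate the oscillatory contribution from it, and show the complementary tails are negligible. The key structural input is precisely the scaling property of $\varphi_1$ supplied by Lemma~\ref{lemma:super-growth-exp}: it keeps the $\tan^2\beta$--type phase loss from absorbing the main term $\varphi_1(\omega\cos\beta)$, which is exactly what allows the lower bound to persist on a sector of fixed opening $\pi/(8\sigma)$ rather than one shrinking with $|\xi|$.
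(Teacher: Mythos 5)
Your construction is genuinely different from the paper's: the paper takes a canonical product $p(\xi)=\prod_{k\ge1}\bigl(1+\xi/\xi_k\bigr)$ with zeros $\xi_k$ determined by $k=\sigma\xi_k^\sigma\psi'(\xi_k^\sigma)$, gets the upper bound from the Lindel{\"o}f counting-function estimate, and gets the lower bound on the sector $\{\vert\Im\xi\vert\le\Re\xi\}$ factor by factor from $\vert 1+l_k\xi/k^{1/\sigma}\vert\ge 1+l_k\vert\xi\vert/(\sqrt2\,k^{1/\sigma})$ together with $l_k\to\infty$ --- no cancellation ever enters. Your upper bound via Legendre duality is fine (after splitting the $t$-integral at a point where $g(t)\ge(\vert\xi\vert^\sigma+1)t-\varphi_1(\vert\xi\vert^\sigma+1)$), and the reduction via Lemma \ref{lemma:super-growth-exp} is sound. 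The lower bound, however, has a genuine gap, in fact two.

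First, the inequality $\omega^2\varphi_1''(\omega)\le c\,\varphi_1(\omega)$ does \emph{not} follow from $\varphi_1(st)\le Cs^{1+\epsilon}\varphi_1(t)$: that scaling property controls $\varphi_1$ and (by convexity) $t\varphi_1'(t)$, but puts no pointwise upper bound on $\varphi_1''$. Indeed the $\varphi_1=\varphi_h$ produced in Lemma \ref{lemma:super-growth-exp} is a mollification at fixed scale $h$ of a piecewise linear function, so near a corner $x_k$ one has $x_k^2\varphi_1''(x_k)\sim x_k^2(\gamma^k-\gamma^{k-1})/h$, which is larger than $\varphi_1(x_k)\sim\gamma^{k}x_k$ by a factor of order $x_k$. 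One could repair this by rounding each corner over an interval of length proportional to $x_k$, but that is a nontrivial modification of the lemma, not a ``harmless perturbation.'' Second, and fatally, the window-plus-tails steepest-descent estimate cannot deliver the claimed bound. Your target main term carries the factor $\exp\bigl(-\tfrac12(\omega\sin\beta)^2\varphi_1''(\omega\cos\beta)\bigr)$, and for $\beta$ bounded away from $0$ this is $\exp(-\lambda'\varphi_1(\omega\cos\beta))$ for some $\lambda'>0$, i.e.\ \emph{exponentially} small relative to $\exp(\varphi_1(\omega\cos\beta))$. The tails you discard --- both the complement of a window of width $R\sqrt{\varphi_1''}$ in the $t$-integral and the truncation error of the complex Gaussian itself --- are only $e^{-R^2/2}$-small relative to $\exp(\varphi_1(\omega\cos\beta))$, hence dwarf the main term for any fixed $R$. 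To beat them you would need $R\gtrsim\sqrt{\varphi_1(\omega\cos\beta)}$, i.e.\ a window of width $\sim\sqrt{\varphi_1\varphi_1''}$ on which $g$ agrees with its quadratic Taylor polynomial up to $o(1)$; already for the model $\varphi_1(u)=u^{1+\epsilon}$ the cubic correction over that window is of the same order as $\varphi_1(\omega)$ itself. The correct asymptotics in the model case are recovered only by deforming the contour through the complex saddle $g'(t_c)=\zeta$ --- unavailable here because your $g$, a Legendre transform of a mollified piecewise linear function, is not real analytic. This is precisely the difficulty the paper's product construction avoids.
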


\begin{proof}
  It is enough to construct an entire function $p(\xi)$ for $\sigma = \frac{1}{2}$ 
that satisfies the estimate \eqref{eq:entire-estimate-1} for any $\xi \in \mathbb C$ and
{\eqref{eq:entire-estimate-2}} on
$
\left\{re^{i\theta} \in \mathbb C;\, r > 0,\, 
\vert \theta \vert < \frac{\pi}{4}\right\}$.
Then, for an arbitrary $\sigma > 0$, the holomorphic function $p(\xi^{2\sigma})$ 
on $\mathbb C \setminus \rea_{\le 0}$ gives a required one.

We suppose $\sigma = \frac{1}{2}$ in what follows.  Let
  $\psi(t)$ be a super growth indicator satisfying the conditions 2.
  and 3. in Lemma \ref{lemma:super-growth-exp} for some $\epsilon > 1$
  and $C >0$ which will be determined later on.


For $s \ge 0$, we set
$$
g(s) := \sigma s\psi'(s).
$$

As
$$
g'(s) = \sigma\left(\psi'(s) + s\psi''(s)\right) \ge \sigma\psi'(0) > 0,
$$
we have that $g(s)$ is a strictly increasing function and 
$\displaystyle\lim_{s\to\infty} g(s) = \infty$.

Now, let $\{\xi_k\}$ be a sequence such that
$$
k = g(\xi_k^\sigma)\qquad k=1,2,\dots
$$
holds for any $k\in\integergz$. Then, since $g(s)$ is strictly
increasing, we have
$$
\xi_1 < \xi_2 < \dots,\qquad\text{and } \lim_{k\to\infty} \xi_k = \infty.
$$

For any $t>0$, we set
\begin{eqnarray*}
n(t) &:=&
\{\text{the number of $\xi_k$ such that $\vert \xi_k \vert \le t$}\}\\
N(t)& := & \int_0^t \frac{n(\lambda)}{\lambda} d\lambda.
\end{eqnarray*}

Moreover, for any $\xi_k \le s < \xi_{k+1}$ we have
$$
 n(s) = n(\xi_k) = k = g(\xi_k^\sigma) \le g(s^\sigma),
$$
hence
$$
\begin{aligned}
N(s) &\le \int_0^s \frac{n(t)}{t} dt \le \int_0^s \frac{g(t^\sigma)}{t} dt \\
     &= \int_0^s \frac{d}{dt}\psi(t^\sigma) dt = \psi(s^\sigma) - \psi(0) = \psi(s^\sigma)
\end{aligned}
$$

Set $$p(\xi):= \Pi_{k\ge 1}\left(1 + \frac{\xi}{\xi_k}\right)$$ By the
Lindel{\"o}f Theorem (see proof of Proposition 4.6, page 59 of
\cite{komatsu_ultradistributionsI}) we conclude that $p(\xi)$ is
absolutely convergent in $\mathbb C$.  The same theorem give the
estimate
$$
\log \sup_{\vert\xi\vert = t}\vert p(\xi) \vert \le 
\int_0^\infty \frac{tN(\lambda)}{(t+\lambda)^2} d\lambda \ .
$$
Thus we get
\begin{eqnarray}\label{eq:logsup}
\log \sup_{\vert\xi\vert = t}\vert p(\xi) \vert &\le &
\int_0^\infty \frac{t\psi(\lambda^\sigma)}{(t+\lambda)^2} d\lambda \notag \\
&=&\int_0^t \frac{t\psi(\lambda^\sigma)}{(t+\lambda)^2} d\lambda +
\int_t^\infty \frac{t\psi(\lambda^\sigma)}{(t+\lambda)^2} d\lambda.
\end{eqnarray}

The f\mbox{}irst term of  right hand side of \eqref{eq:logsup} satisf\mbox{}ies
$$
\int_0^t \frac{t\psi(\lambda^\sigma)}{(t+\lambda)^2} d\lambda 
\le \frac{1}{t}\int_0^t \psi(\lambda^\sigma) d\lambda \le \psi(t^\sigma).
$$
We estimate the second term of the right hand side of
\eqref{eq:logsup} as follows.  Let $\psi$ be a super growth indicator
satisfying the condition 3. in Lemma \ref{lemma:super-growth-exp} for
$C=2$ and $\epsilon = \displaystyle\frac{1}{\sqrt{\sigma}} - 1$.  Then
we have
$$
\begin{aligned}
\int_t^\infty \frac{t\psi(\lambda^\sigma)}{(t+\lambda)^2} d\lambda
&= \int_1^\infty \frac{\psi(t^\sigma\lambda^\sigma)}{(1+\lambda)^2} d\lambda \\
&\le 2\int_1^\infty 
\frac{\lambda^{\sigma(1+\epsilon)}\psi(t^\sigma)}
{1+\lambda^2} d\lambda
\le 2\left(\int_1^\infty \frac{1}{\lambda^{2-\sqrt{\sigma}}} d\lambda\right) \psi(t^\sigma).
\end{aligned}
$$

Hence we obtain
$$
\log \sup_{\vert\xi\vert = t}\vert p(\xi) \vert \le C_\sigma \psi(t^\sigma)
$$
where $C_\sigma$ depends only on $\sigma$.  Therefore $p(\xi)$
satisf\mbox{}ies the condition 1. of Lemma
\ref{lemma:holmorphic-super} when $\psi(t)$ is a super growth
indicator given by Lemma \ref{lemma:super-growth-exp} with the
indicator $\displaystyle\frac{\varphi(t)}{C_\sigma}$ and the constants
$C=2$ and $\epsilon = \displaystyle\frac{1}{\sqrt{\sigma}} - 1$.

Now, for $k\in\integergz$, set
$$
l_k := \frac{k^\frac{1}{\sigma}}{\xi_k} \ .
$$
Let us prove that, for $k \to \infty$, $l_k \to \infty$.  Indeed,
unless $l_k \to \infty$, then there exists $M > 0$ and an increasing
sequence of natural numbers $\{k_p\}$ with $l_{k_p} \le M$. Since
$$
k_p = g(\xi_{k_p}^\sigma) = \sigma \xi_{k_p}^\sigma\psi'(\xi_{k_p}^\sigma) 
=
\sigma \frac{k_p}{l_{k_p}^\sigma} \psi'\left(\frac{k_p}{l_{k_p}^\sigma}\right).
$$
holds, we have
\begin{equation}
  \label{eq:lsigmakp}
l_{k_p}^\sigma = \sigma \psi'\left(\frac{k_p}{l_{k_p}^\sigma}\right)  \ .
\end{equation}
Since $\psi'(t) \to \infty$ ($t \to \infty$) and
$\frac{k_p}{l_{k_p}^\sigma} \to \infty$ ($p \to \infty$), the right
hand side of \eqref{eq:lsigmakp} tends to $\infty$.  This contradicts
to the fact that the left hand side of \eqref{eq:lsigmakp} is bounded.

\

Now, $p(\xi)$ can be written in the form:
$$
p(\xi) = \Pi_{k\ge 1}\left(1 + \frac{l_k\xi}{k^{\frac{1}{\sigma}}}\right).
$$
If $\xi \in D = \{\vert \Im \xi \vert \le \Re \xi\}$, we have
$$
\left| 1 + \frac{l_k \xi}{k^{\frac{1}{\sigma}}}\right| \ge 
1 + \frac{l_k \Re \xi}{k^{\frac{1}{\sigma}}} \ge
1 + \frac{l_k \vert \xi \vert}{\sqrt{2}k^{\frac{1}{\sigma}}} \ .
$$
Since $l_k \to \infty$, for any given $L > 0$ there exists $k_0$ such
that $l_k \ge \sqrt{2}L$ ($k \ge k_0$).  Hence, for $\xi \in D$, we
get
$$
\vert p(\xi) \vert \ge
\displaystyle
\frac{\left| \Pi_{k\le k_0}\left(1 + \frac{l_k\xi}{k^{\frac{1}{\sigma}}}\right)\right|}
{\Pi_{k\le k_0}\left(1 + \frac{L \vert \xi \vert}{k^{\frac{1}{\sigma}}}\right)}
\Pi_{1 \le k }\left(1 + \frac{L\vert \xi \vert}{k^{\frac{1}{\sigma}}}\right)
\ge
C_L\Pi_{1 \le k }\left(1 + \frac{L\vert \xi \vert}{k^{\frac{1}{\sigma}}}\right)
$$
for some $C_L > 0$. It is well know that there exist $A,B>0$ such
that, for $t \ge 0$,
$$
\Pi_{1 \le k }\left(1 + \frac{t}{k^{\frac{1}{\sigma}}}\right) \ge
A \exp (B t^\sigma) \ .
$$
Therefore we have that for any $L > 0$
$$
\vert p(\xi) \vert \ge AC_L \exp \left(BL^\sigma \vert \xi \vert^\sigma\right)
=  \exp \left(BL^\sigma \vert \xi \vert^\sigma + \log(AC_L)\right) \qquad
\xi \in D.
$$
This implies that there exists a super growth indicator $\rho(t)$
satisfying
$$
\vert p(\xi) \vert \ge \exp \rho(\vert \xi \vert^\sigma) \qquad \xi \in D.
$$
\end{proof}

\begin{alemma}{\label{lemma:partition-of-unity}}
  Let $s > 1$.  Assume $*=(s)$ $($resp. $*=\{s\})$. 
  Then for any compact set $K\subset\rea^n$ and 
  for any super $($resp. any linear$)$ growth indicator $\varphi$, 
  there exists a family of functions
  $\{\chi_\epsilon(x)\}_{\epsilon > 0}\subset \ms C^* (\rea^n)$
  satisfying the following conditions.
\begin{enumerate}
\item For any $h>0$ $($resp. some $h > 0)$ there exists a constant $C_h$ such that
$$
\vert\vert \chi_\epsilon(x) \vert\vert_{\mathbb R^n, s,h} 
\le C_h \exp\left(\varphi\left(\epsilon^{-\sigma}\right)\right)\qquad
\text{for any } \epsilon > 0.
$$
\item $\operatorname{supp}(\chi_\epsilon(x)) \subset K_{\epsilon}$.
\item $\chi_\epsilon(x) \ge 0$ and, if $x \in K_{\frac{\epsilon}{2}}$,
  $\chi_\epsilon(x) = 1$.
\end{enumerate}
Where $K_{\epsilon}$ denotes the set $\{x\in X;\,
\operatorname{dist}(x,K)\le \epsilon\}$ and $\sigma = \frac{1}{s-1}$.
\end{alemma}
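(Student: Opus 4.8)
The plan is to take for $\chi_\epsilon$ the convolution of the indicator function of a set lying between $K_{\epsilon/2}$ and $K_\epsilon$ with an ultradifferentiable mollifier of width $\sim\epsilon$. Fix once and for all a bump $\rho\in\ms D^{(s)}(\rea^n)$ with $\rho\ge 0$, $\int_{\rea^n}\rho\,dx=1$, $\supp{\rho}\subset\{|x|\le 1/2\}$, and set $\mu_\epsilon(x):=(4/\epsilon)^n\rho(4x/\epsilon)$ and
$$
\chi_\epsilon:=\mathbf 1_{K_{5\epsilon/8}}\ast\mu_\epsilon .
$$
Then $0\le\chi_\epsilon\le 1$; since $\supp{\mu_\epsilon}\subset\{|x|\le\epsilon/8\}$ one gets $\supp{\chi_\epsilon}\subset K_{3\epsilon/4}\subset K_\epsilon$, which is condition 2; and for $x\in K_{\epsilon/2}$ every $y$ with $|y|\le\epsilon/8$ has $\operatorname{dist}(x-y,K)\le 5\epsilon/8$, so $\chi_\epsilon(x)=\int\mu_\epsilon\,dy=1$, which is condition 3. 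For condition 1 one differentiates under the integral sign: $\|D^\alpha\chi_\epsilon\|_{L^\infty}\le\|D^\alpha\mu_\epsilon\|_{L^1}=(4/\epsilon)^{|\alpha|}\|D^\alpha\rho\|_{L^1}$, the Jacobian of the rescaling cancelling the factor $(4/\epsilon)^n$. So everything reduces to a good estimate of $\|D^\alpha\rho\|_{L^1}$ for the fixed bump $\rho$.

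The key point is that one can choose $\rho$ so that, for suitable constants $C,A>0$,
$$
\|D^\alpha\rho\|_{L^1}\ \le\ C\,\exp\!\big(A\,h^{-\sigma}\big)\,h^{|\alpha|}(|\alpha|!)^s\qquad\text{for all }0<h\le 1,\ \alpha\in(\integer_{\ge 0})^n,
$$
that is, $\rho$ is of ``optimal type growth'': it is ultradifferentiable of class $(s)$, and its Gevrey norm of small type $h$ blows up only like $\exp(Ah^{-\sigma})$ as $h\to 0$. Such a $\rho$ is produced by the classical construction of a cut-off function of Beurling class $(s)$ -- an infinite convolution of fixed-type Gevrey-$s$ bumps rescaled by a sequence of the kind appearing in the proof of Lemma \ref{lemma:holmorphic-super} (see also \cite{komatsu_ultradistributionsI}); the super growth indicators of Lemmas \ref{lemma:super-basic}--\ref{lemma:super-growth-exp} and the entire function of controlled growth of Lemma \ref{lemma:holmorphic-super} are precisely the tools set up for this. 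Granting it, substitute $h=\epsilon h'/8$ into the displayed inequality: for $\epsilon h'\le 8$ this gives $\|D^\alpha\chi_\epsilon\|_{L^\infty}\le C\exp\!\big(A(8/h')^\sigma\epsilon^{-\sigma}\big)(h')^{|\alpha|}(|\alpha|!)^s$, while for $\epsilon h'> 8$ the factor $\exp(Ah^{-\sigma})$ is at most $Ce^{A}$; together with $|\chi_\epsilon|\le 1$ for $\alpha=0$ this yields
$$
\|\chi_\epsilon\|_{\rea^n,s,h'}\ \le\ C'_{h'}\,\exp\!\big(B_{h'}\,\epsilon^{-\sigma}\big),\qquad B_{h'}:=A\,(8/h')^\sigma ,
$$
uniformly in $\epsilon>0$.

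It remains to absorb $B_{h'}\epsilon^{-\sigma}$ into $\varphi(\epsilon^{-\sigma})$. In the Beurling case $*=(s)$ this must be done for every $h'>0$: as $\varphi$ is a super growth indicator, $\varphi(t)/t\to\infty$, hence $\sup_{t>0}\big(B_{h'}t-\varphi(t)\big)=:\log C''_{h'}<\infty$, and so $\|\chi_\epsilon\|_{\rea^n,s,h'}\le C'_{h'}C''_{h'}\exp(\varphi(\epsilon^{-\sigma}))$ for all $\epsilon>0$ -- this is condition 1. In the Roumieu case $*=\{s\}$ only one $h'$ is required and $\varphi$ is only assumed linear, say $\varphi(t)=\eta t+c$; then it suffices to pick $h'$ so large that $B_{h'}=A(8/h')^\sigma\le\eta$, and condition 1 holds with that $h'$. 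This is exactly the super-versus-linear dichotomy of the statement. The one genuinely delicate ingredient, and the reason the preceding lemmas of this appendix are needed, is the construction of the fixed bump $\rho$ with the sharp $\exp(Ah^{-\sigma})$ control on its Gevrey norms; everything else is a rescaling together with the elementary Legendre/convexity estimate above.
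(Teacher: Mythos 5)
There is a genuine gap, and it is located exactly where you put all the weight: the ``optimal type growth'' bump $\rho$ does not exist. Suppose a nonzero $\rho\in\ms D^{(s)}(\rea^n)$ satisfied
$\Vert D^\alpha\rho\Vert_{L^1}\le C\exp(Ah^{-\sigma})\,h^{|\alpha|}(|\alpha|!)^s$ for all $0<h\le 1$ and all $\alpha$
(the same argument applies verbatim to sup--norms, to which the $L^1$ bounds reduce after integrating one extra derivative per variable over the compact support). Fix $k=|\alpha|$ and optimize over $h$: the minimum of $h^{k}\exp(Ah^{-\sigma})$ is attained at $h_*=(A\sigma/k)^{1/\sigma}$ and equals $(eA\sigma/k)^{k/\sigma}=(eA\sigma/k)^{k(s-1)}$. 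By Stirling, $k^{-k(s-1)}\le \mathrm{const}\cdot e^{-k(s-1)}(k!)^{-(s-1)}k^{(s-1)/2}$, so the hypothesis forces
$\Vert D^\alpha\rho\Vert_{\infty}\le C' B^{|\alpha|}(|\alpha|!)^{s}(|\alpha|!)^{-(s-1)}=C'B^{|\alpha|}\,|\alpha|!$
for some $B>0$. That is, $\rho$ would be real analytic, hence, being compactly supported, identically zero --- contradicting $\int\rho=1$. (The rate $\exp(Ah^{-\sigma})$ is precisely the Legendre dual of the real--analytic threshold $M_k\sim k!$ inside the scale $(k!)^s h^k$; any genuine Beurling--$(s)$ cut-off must have $\Vert\rho\Vert_{s,h}$ blowing up strictly faster than every $\exp(Ah^{-\sigma})$ as $h\to 0$.) Consequently the whole dilation scheme $\mu_\epsilon(x)=(4/\epsilon)^n\rho(4x/\epsilon)$ collapses: pure rescaling of a fixed bump cannot yield the bound $C_h\exp(\varphi(\epsilon^{-\sigma}))$ of condition 1. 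Your treatment of conditions 2 and 3, and the final Beurling/Roumieu absorption of $B_{h'}\epsilon^{-\sigma}$ into $\varphi(\epsilon^{-\sigma})$, are fine as such, but they rest on this nonexistent object.

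The paper avoids dilation entirely. It builds, for each $\epsilon$, a one--dimensional profile $g_\epsilon(x)=f(x+2\epsilon)f(-x+2\epsilon)$ by \emph{translating} a single fixed function $f(x)=1/p(x^{-1})$ (with $p$ from Lemma \ref{lemma:holmorphic-super}); translation leaves the Gevrey norms of $g_\epsilon$ bounded uniformly in $\epsilon$, and the factor $\exp(\mathrm{const}\cdot\psi(\epsilon^{-\sigma}))$ enters only through the normalization $\hat g_\epsilon=g_\epsilon/\int g_\epsilon$, because $\int g_\epsilon\ge 2\epsilon M\exp(-2\psi(\epsilon^{-\sigma}))$. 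This is why Lemma \ref{lemma:holmorphic-super} is stated with \emph{both} an upper bound (which gives $f\in\ms C^{(s)}$ via Cauchy estimates) and a lower bound $|p(\xi)|\ge\exp(\rho(|\xi|^\sigma))$ on a sector (which controls $\int g_\epsilon$ from below); your sketch only invokes the upper--bound half. If you want to repair your argument, replace the rescaled mollifier by a translated--and--renormalized one of this kind; the convolution with the indicator of an intermediate neighbourhood of $K$ and the concluding convexity step can then be kept as you wrote them.
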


\begin{proof}
  Let $\psi$ be a super growth indicator, $p(\xi)$ a holomorphic
  function as given in Lemma \ref{lemma:holmorphic-super} with the
  super growth indicator $\psi(t)$ and $\sigma > 0$.  We set
$$
f(x) =
\left\{
\begin{matrix}
\displaystyle\frac{1}{p\left(x^{-1}\right)} \qquad & x > 0\ , \\
\\
0 & x \le 0 \ .
\end{matrix}
\right.
$$
One checks easily that $f(z)$ is holomorphic in the sector
$$
S = \{z \in \mathbb C; \vert \operatorname{arg}(z) \vert < \kappa\} \ ,
$$
for some $\kappa > 0$, and that there exists a super growth indicator
$\rho$ such that
$$
\vert f(z) \vert < \exp (-\rho(\vert z \vert^{-\sigma}))
$$
holds for $z \in S$. Hence, using the Cauchy inequality, it is easy to
see that $f(x) \in {\ms C}^{(s)}(\rea)$, that is, for $h>0$ and any
compact set $K \subset \mathbb R$ we have
$$
\vert\vert f(x) \vert\vert_{K, s,h} < \infty \ .
$$
Set 
$$
g_\epsilon(x):= f(x+2\epsilon)f(-x+2\epsilon) \ .
$$
Then, for $0 < \epsilon < 1$, we have
$$
\vert\vert g_\epsilon(x) \vert\vert_{\mathbb R, s, h}
\le \vert\vert f(x) \vert\vert^2_{[0,4],s, \frac{h}{2}}
$$
and $\operatorname{supp}(g_\epsilon(x)) \subset \{\vert x \vert \le
2\epsilon\}$.  Moreover, there exists $M>0$ such that, for any $x>0$,
$$
\vert f(x) \vert \ge M\exp(-\psi(\vert x \vert^{-\sigma})) \ .
$$
Hence, we get
$$
g_\epsilon(x) \ge M\exp(-2\psi( \epsilon^{-\sigma})) \ ,
$$
for $x \in [-\epsilon, \epsilon]$. Thus
$$
\int_{\mathbb R} g_\epsilon(x) dx 
\ge 2\epsilon M\exp(-2\psi(\epsilon^{-\sigma})) \ .
$$
Now, set
$$
\hat{g}_\epsilon(x) := \frac{g_\epsilon(x)}{\displaystyle\int_{\mathbb
    R} g_\epsilon(x) dx} \ .
$$
Then $\hat{g}_\epsilon$ satisf\mbox{}ies
$$
\vert\vert \hat{g}_\epsilon \vert\vert_{\mathbb R, s, h}
\le 
\vert\vert f \vert\vert^2_{[0,4],s, \frac{h}{2}}\frac{1}{2\epsilon M}
\exp(2\psi( \epsilon ^{-\sigma}))
\le M_h \exp(3\psi( \epsilon^{-\sigma}))
$$
for some constant $M_h$ that depends only on $h$.
Set
$$
\hat{g}_\epsilon(x_1,\dots,x_n) := 
\hat{g}_\epsilon(x_1)\hat{g}_\epsilon(x_2)\dots \hat{g}_\epsilon(x_n).
$$
Then $\hat{g}_\epsilon(x)$ satisf\mbox{}ies the followings conditions.
\begin{enumerate}
\item For any $h > 0$ there exists $M_h >0$ such that
$$
\vert\vert \hat{g}_\epsilon(x_1,\dots,x_n) \vert\vert_{\mathbb R^n,s,h}
\le M_h \exp(3n\psi( \epsilon^{-\sigma})) \qquad\text{for any } \epsilon > 0.
$$
\item 
$$
\int_{\mathbb R^n} \hat{g}_\epsilon(x) dx = 1
$$
\item 
$$
\operatorname{supp}(\hat{g}_\epsilon(x)) \subset \{x \in \mathbb R^n;
\vert x \vert \le 2\sqrt{n}\epsilon\}.
$$
\end{enumerate}
Let $\hat{\chi}_{K_{\frac{3\epsilon}{4}}}$ denote
the characteristic function of $K_{\frac{3\epsilon}{4}}$.
If we chose
$$
\psi(t) = \frac{1}{3n}\varphi( (8\sqrt{n})^{-\sigma}t),
$$ 
then
$$
\chi_{\epsilon} := 
\hat{g}_{\frac{\epsilon}{8\sqrt{n}}} * \hat{\chi}_{K_{\frac{3\epsilon}{4}}}
$$
satisf\mbox{}ies the required conditions. 
\end{proof}

\begin{aprop}{\label{prop:dense-whitney}}
  Let $X$ be a real analytic manifold and $A\subset X$ a compact
  subanalytic set.  Then $\Swj(X)$ is a dense subset of the locally
  convex topological vector space $\Sswj(X)$.
\end{aprop}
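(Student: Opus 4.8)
The strategy is to prove that $\Swj(X)$, which is a subspace of $\Sswj(X)$, is dense in it for the {\bf{FS}} (if $*=(s)$) resp.\ {\bf{DFS}} (if $*=\{s\}$) topology of Lemma~\ref{lemma:topology-whitney}, by induction on the number of strata of a $1$-regular stratification of $A$. Since $A$ is compact, covering it by finitely many coordinate charts and using a partition of unity of class $*$, and exploiting that $\Swj$ and $\Sswj$ are $\ms C^*$-modules and that by Lemma~\ref{lemma:topology-whitney} $\Sswj(X)$ is a closed subspace of a finite direct sum of spaces ${\mathcal S}{\mathcal W}^*_{\overline{A}_i}(X)$ attached to the charts, I would first reduce to $X=\rea^n$ with $A$ a compact subanalytic set equipped with a fixed $1$-regular stratification $\{A_\alpha\}_{\alpha\in\Lambda}$, which, after refining via \cite{kurdyka}, may be assumed to satisfy the frontier condition. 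If $|\Lambda|=1$ then $A$ is $1$-regular and $\Swj(X)=\Sswj(X)$ by Proposition~\ref{prop:whitney-topology}, so there is nothing to prove.

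For the inductive step, with $|\Lambda|=N>1$, I would choose a stratum $A_\beta$ with $\dim A_\beta=\dim A$; by the frontier condition $A_\beta$ is open in $A$, so $A':=A\setminus A_\beta=\bigcup_{\gamma\ne\beta}\overline{A}_\gamma$ is compact subanalytic with the $1$-regular stratification $\{A_\gamma\}_{\gamma\ne\beta}$ of $N-1$ strata, while $Z:=\overline{A}_\beta\cap A'=\partial A_\beta$ is compact subanalytic of dimension $<\dim A$. As $\overline{A}_\beta$ is $1$-regular, Proposition~\ref{prop:whitney-topology} gives ${\mathcal S}{\mathcal W}^*_{\overline{A}_\beta}(X)={\mathcal W}^*_{\overline{A}_\beta}(X)$, and by the induction hypothesis ${\mathcal W}^*_{A'}(X)$ is dense in ${\mathcal S}{\mathcal W}^*_{A'}(X)$. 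Given $F\in\Sswj(X)$, choose $f\in\ms C^*(X)$ with $j_{\overline{A}_\beta}(f)=j_{\overline{A}_\beta,A}(F)$ and, by the induction hypothesis, $g\in\ms C^*(X)$ with $j_{A'}(g)$ as close as we wish to $j_{A',A}(F)$ in ${\mathcal S}{\mathcal W}^*_{A'}(X)$. Since $j_Z(f)$ and $j_{Z,A'}(j_{A',A}(F))$ both equal $j_{Z,A}(F)$, the jet $j_{A'}(f)-j_{A',A}(F)\in{\mathcal S}{\mathcal W}^*_{A'}(X)$ vanishes on $Z$, hence is flat on $Z$. The problem is thus reduced to gluing $f$, valid to infinite order on $\overline{A}_\beta$, with $g$, approximately valid on $A'$, across the lower-dimensional tangency locus $Z$.

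This gluing is the heart of the argument and is where the whole Appendix toolkit enters. A single hard cut-off supported in an $\epsilon$-collar of $\overline{A}_\beta$ does not work, because $\overline{A}_\beta$ and $A'$ may meet tangentially along $Z$ to arbitrarily high order; instead one must use the family $\{\chi_\epsilon\}_{\epsilon>0}$ of Lemma~\ref{lemma:partition-of-unity}, whose $\ms C^*$-norms grow only like $\exp(\varphi(\epsilon^{-\sigma}))$ for a super (resp.\ linear) growth indicator $\varphi$ of our choosing ($\sigma=\tfrac1{s-1}$), shaped to the tangency of $\overline{A}_\beta$ and $A'$ and combined scale by scale, so that their cost is measured in coordinates along $Z$ adapted via a \L ojasiewicz inequality relating $\mathrm{dist}(\cdot,\overline{A}_\beta)$ and $\mathrm{dist}(\cdot,Z)$ on $A'$. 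Against this one plays the super-exponential decay near $Z$ of the flat jet $j_{A'}(f)-j_{A',A}(F)$, provided by Proposition~\ref{prop:decay-estimate}; the super growth indicator calculus of Lemmas~\ref{lemma:super-basic}, \ref{lemma:super-growth-exp} and \ref{lemma:holmorphic-super} is precisely the device that lets one choose $\varphi$ and the auxiliary indicators so that the decay dominates the growth. The resulting $g_\epsilon\in\ms C^*(X)$ coincides with $f$ to infinite order on $\overline{A}_\beta$ and, for a suitable choice of $\epsilon$ and a sufficiently good $g$, satisfies $\|j_A(g_\epsilon)-F\|_{A,s,h}<\eta$ for any prescribed $\eta>0$; this puts $F$ in the closure of $\Swj(X)$ and completes the induction. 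For $*=\{s\}$ the same scheme is run with linear growth indicators and a single $h$ throughout.

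The principal obstacle is exactly this balance at the tangency locus $Z=\partial A_\beta$: the cut-offs must vary across a neighborhood of $\overline{A}_\beta$ and so carry a large norm, which has to be absorbed by the flatness of $j_{A'}(f)-j_{A',A}(F)$ on $Z$ after it is transported --- via the \L ojasiewicz inequality and the covering of the transition region by the sets $B(A_\alpha,\cdot)$ of Proposition~\ref{prop:decay-estimate} --- into decay along $A'$; making the various super growth indicators compatible is what Lemmas~\ref{lemma:super-basic}--\ref{lemma:holmorphic-super} are for. Establishing that one may reduce to a frontier-condition stratification (so that a top-dimensional stratum is open in $A$, which is what makes the peeling step legitimate) is a secondary point, as is the uniformity of all the estimates in $\epsilon$ and in the approximation of $g$.
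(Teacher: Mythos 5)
Your reduction to $X=\rea^n$ with $A$ compact, your reliance on Lemma \ref{lemma:partition-of-unity} and Proposition \ref{prop:decay-estimate}, and your identification of the tangency of the strata as the central difficulty all match the paper. The gap is that the gluing step --- the declared ``heart of the argument'' --- is not actually carried out, and the mechanism you sketch for it does not work. In your inductive step the cut-off is a collar around the single top-dimensional stratum $\overline{A}_\beta$, and on $A'=A\setminus A_\beta$ you propose to convert its cost into decay via a \L ojasiewicz inequality comparing $\operatorname{dist}(\cdot,\overline{A}_\beta)$ with $\operatorname{dist}(\cdot,Z)$. On the compact set $A'$ that comparison only gives $\operatorname{dist}(x,Z)\le C\operatorname{dist}(x,\overline{A}_\beta)^{1/N}$ for some $N\ge 1$, so a cut-off of cost $\exp(\varphi(l^{-\sigma}))$ is supported, on $A'$, in a region where Proposition \ref{prop:decay-estimate} only guarantees decay $\exp(-\psi(c\,l^{-\sigma/N}))$. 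In the Beurling case you would therefore need a super growth indicator $\varphi$ with $\varphi(t)\ll\psi(ct^{1/N})$; but every super growth indicator satisfies $\varphi(t)/t\to\infty$, while $\psi(ct^{1/N})$ can be $o(t)$ (take $\psi(u)=u\log u$ and $N=2$), so no such $\varphi$ exists --- the class of super growth indicators is not stable under precomposition with $t\mapsto t^{1/N}$ (the Roumieu case with linear indicators fails for the same reason). This is why a single collar around $\overline{A}_\beta$, or around $Z$, cannot be repaired by \L ojasiewicz.

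The paper avoids this entirely and uses no \L ojasiewicz inequality: it does not peel one stratum at a time but constructs, recursively over the dimension of the strata, a compact core $W_{\ep,\gamma}\subset A_\gamma$ and a radius $l_{\ep,\gamma}$ for \emph{every} stratum $A_\gamma$ simultaneously, arranged so that the $l_{\ep,\gamma}$-neighbourhood of $W_{\ep,\gamma}$ meets $A$ only inside the open star $U(A_\gamma)$, hence inside $B(A_\gamma,l_{\ep,\gamma})$. The cut-off $\chi_{\ep,\gamma}$ attached to $A_\gamma$ then has cost $\exp(\varphi(l_{\ep,\gamma}^{-\sigma}))$ on a set where Proposition \ref{prop:decay-estimate} gives decay $\exp(-\psi(l_{\ep,\gamma}^{-\sigma}))$ \emph{at the same scale}, so the balance $\varphi\ll\frac{1}{2\#\Lambda}\psi$ suffices with no exponent loss. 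These cut-offs are assembled into an approximate identity near $A$ by an inclusion--exclusion over chains $\beta_1\prec\dots\prec\beta_i$ of strata, each term carrying the global extension $g_{\beta_i}$ of $F$ from the deepest stratum of the chain; there is no induction and no approximate datum $g$ on $A'$ whose error would be amplified by the cut-offs. To salvage your plan you would have to reproduce essentially this multi-stratum, multi-scale construction inside the inductive step, at which point the induction buys nothing.
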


\begin{proof}
  Take a 1-regular stratif\mbox{}ication $\{A_\alpha\}$ of $A$ and
  f\mbox{}ix it.  Let $A_\alpha$ be a stratum. Set
$$
Z(A_\alpha) := \underset{\bar{A}_\alpha \cap \bar{A}_\beta \ne \varnothing}
\cup \bar{A}_\beta.
$$
and
$$
U(A_\alpha) := \underset{A_\alpha \subset \bar{A}_\beta}
\cup A_\beta.
$$
Then $Z(A_\alpha)$ is compact and it is a closed neighborhood of
$\bar{A}_\alpha$ in $A$.  Hence by the partition of unity, we may
assume that $X=\mathbb R^n$ and $A$ is a compact subanalytic set.

\ 
 
Let $\Lambda_k$ (resp. $\Lambda_{\le k}$) be the set of indices
$\alpha \in \Lambda$ such that $\dim(A_\alpha) = k$ (resp.
$\dim(A_\alpha) \le k$).  For $\epsilon >
0$, let us determine a family of closed sets
$\{W_{\epsilon,\alpha}\}_{\alpha\in \Lambda}$ in $X$ and a family of
positive constants $\{l_{\epsilon, \alpha}\}_{\alpha \in \Lambda}$ in
the following way.

\noindent
If $k=0$, then for any $\alpha \in \Lambda_0$,
\begin{enumerate}
\item $W_{\epsilon, \alpha} = A_\alpha$ \ ,
\item let $l_{\ep,\alpha}>0$ be such that $\epsilon > l_{\epsilon,
    \alpha}$ and $(W_{\epsilon, \alpha,\,l_{\epsilon, \alpha}} \cap A)
  \subset U(A_\alpha)$. Here $W_{\epsilon, \alpha,\,l_{\epsilon,
      \alpha}} =\{x \in X;\, \operatorname{dist}(x, W_{\epsilon,
    \alpha}) \le l_{\epsilon, \alpha}\}$.
\end{enumerate}

\noindent
Suppose that we already determined $W_{\epsilon, \alpha}$ and
$l_{\epsilon, \alpha}$ for every $\alpha \in \Lambda_{\le k-1}$.  Set
$$
\epsilon_{k-1} = \displaystyle\min_{\alpha \in \Lambda_{\le k-1}} l_{\epsilon, \alpha} \ .
$$
First, let $\{W_{\epsilon, \alpha}\}_{\alpha \in \Lambda_k}$ be such that
$$
\bar{A}_\alpha \setminus 
\left(\underset{A_\beta \subset \bar{A}_\alpha,\, \beta \in \Lambda_{\le k-1}}{\cup} 
W_{\epsilon, \beta, \frac{1}{3}l_{\epsilon, \beta}} \right)
\subset W_{\epsilon, \alpha} \subset A_\alpha.
$$
Note that the set
$$
\underset{A_\beta \subset \bar{A}_\alpha,\, \beta \in \Lambda_{\le k-1}}{\cup} 
W_{\epsilon, \beta, \frac{1}{3}l_{\epsilon, \beta}} 
$$
is a neighborhood of $\bar{A}_\alpha \setminus A_\alpha$.
Since $W_{\epsilon, \alpha} \cap W_{\epsilon, \beta} = \varnothing$ 
for any $\alpha \ne \beta \in \Lambda_k$,
there exists a constant $\delta_{\epsilon, k} > 0$
$$
\delta_{\epsilon, k} = \frac{1}{3}\min_{\alpha\ne\beta \in \Lambda_k} 
\operatorname{dist}(W_{\epsilon, \alpha}, W_{\epsilon, \beta}).
$$
Then, for $\alpha \in \Lambda_k$, let $l_{\epsilon, \alpha}$ satisfy:
\begin{enumerate}
\item $l_{\epsilon, \alpha} < \min \{\delta_{\epsilon,
    k},\,\epsilon_{k-1}\}$ \ ,
\item $(W_{\epsilon, \alpha,\, l_{\epsilon, \alpha}} \cap A) \subset
  U(A_\alpha)$ \ ,
\item for any $\beta \in \Lambda_{\le k-1}$ with $A_{\beta} \cap
  \bar{A}_\alpha = \varnothing$, $W_{\epsilon, \alpha,\, l_{\epsilon,
      \alpha}} \cap W_{\epsilon, \beta,\,l_{\epsilon, \beta}} =
  \varnothing$ holds.
\end{enumerate}
Note that such an $l_{\epsilon, \alpha}$ always exists.  Indeed,
$W_{\epsilon, \alpha} \subset A_\alpha$ is a compact set and
$U(A_\alpha)$ is an open subset in $A$.  Hence, there exists
$l_{\epsilon, \beta}$ satisfying condition 2.  Since $A_\beta \cap
\bar{A}_\alpha = \varnothing$ implies $U(A_\beta) \cap A_\alpha =
\varnothing$, we have by the induction hypothesis
$$
W_{\epsilon, \beta,\,l_{\epsilon, \beta}} \cap W_{\epsilon, \alpha} 
= W_{\epsilon, \beta, l_{\epsilon, \beta}} \cap A \cap W_{\epsilon, \alpha}
\subset U(A_\beta) \cap A_\alpha = \varnothing.
$$

As $W_{\epsilon, \beta,\,l_{\epsilon, \beta}}$ and $W_{\epsilon,
  \alpha}$ are closed sets, the condition 3. can be fullf\mbox{}illed.

\

\noindent
Let $\varphi$ be a super growth indicator. By Lemma
\ref{lemma:partition-of-unity}, for any $h>0$, there exist $C_h>0$ and
a family $\{\chi_{\ep,\beta}\}\subset{\ms C}^{(s)}(X)$
such any for any $\ep, \beta$
\begin{enumerate}
\item[(a)] $\chi_{\epsilon, \beta}(x) = 1$ for 
$x \in W_{\epsilon, \beta, \frac{1}{2}l_{\epsilon, \beta}}$ \ ,
\item[(b)] $\operatorname{supp}{\chi_{\epsilon, \beta}(x)} 
\subset W_{\epsilon, \beta,l_{\epsilon, \beta}}$ \ ,
\item[(c)] 
$
\vert\vert \chi_{\epsilon, \beta} \vert\vert_{\mathbb R^n, s, h} 
\le C_{h}
\exp\left(\varphi(l_{\epsilon,\beta}^{-\sigma})\right).
$
\end{enumerate}
Since 
$
\underset{\beta}{\cup} W_{\epsilon, \beta, \frac{1}{2}l_{\epsilon, \beta}}
$ 
is a neighborhood of $A$, we have
\begin{equation}
  \label{eq:prod=0}
\underset{\beta \in \Lambda}{\Pi} (1-\chi_{\epsilon, \beta}(x)) = 0
\end{equation}
in a neighborhood of $A$.

Now, let us prove that, for any indices $\beta_1$ and $\beta_2$ with
$\beta_1 \npreceq \beta_2$ and $\beta_2 \npreceq \beta_1$, we have
\begin{equation}
  \label{eq:prod2=0}
  \chi_{\epsilon, \beta_1}(x)\chi_{\epsilon, \beta_2}(x) \equiv 0 \ .
\end{equation}

Here $\beta_1 \preceq \beta_2$ means $A_{\beta_1} \subset
\bar{A}_{\beta_2}$.  Indeed, if $\operatorname{dim}{A_{\beta_1}} =
\operatorname{dim}{A_{\beta_2}}$ and $\beta_1 \ne \beta_2$, then if
follows from the condition 1 of $l_{\epsilon,\beta}$ and $l_{\epsilon,
  \beta_i} < \frac{1}{3}\operatorname{dist}(W_{\epsilon, \beta_1},
W_{\epsilon, \beta_2})$ ($i = 1,2$) that we have
$$
\operatorname{supp}(\chi_{\epsilon, \beta_1}) \cap
\operatorname{supp}(\chi_{\epsilon, \beta_2}) \subset
W_{\epsilon, \beta_1, l_{\epsilon,\beta_1}} \cap 
W_{\epsilon, \beta_2, l_{\epsilon,\beta_2}} = \varnothing.
$$
Therefore we may assume 
$\operatorname{dim}{A_{\beta_1}} < \operatorname{dim}{A_{\beta_2}}$.
Since
$\beta_1 \npreceq \beta_2$ implies $A_{\beta_1} \cap \bar{A}_{\beta_2} = \varnothing$,
the relations
$$ 
\operatorname{supp}(\chi_{\epsilon, \beta_1}) \cap
\operatorname{supp}(\chi_{\epsilon, \beta_2}) \subset
W_{\epsilon, \beta_1, l_{\epsilon,\beta_1}} \cap
W_{\epsilon, \beta_2, l_{\epsilon,\beta_2}} = \varnothing
$$
follow from the condition 3.  

The, from \eqref{eq:prod=0} and \eqref{eq:prod2=0}, we obtain that
$$
1=\sum_{i=1}^{\#\Lambda}
\sum_{\beta_1 \prec \beta_2 \prec \dots \prec \beta_i\in\Lambda}
(-1)^i\chi_{\epsilon, \beta_1}(x)
\chi_{\epsilon, \beta_2}(x) \dots
\chi_{\epsilon, \beta_i}(x) 
$$
in a neighborhood of $A$. Here $\beta_1 \prec \beta_2$ implies that
$\beta_1 \ne \beta_2$ and $A_{\beta_1} \subset \bar{A}_{\beta_2}$.

\

\noindent
Let $F \in \Sswj(X)$.  Since we have $\Sswj(X) = \Ssawj(X)$, then, for
any $\alpha$ there exists $g_\alpha(x) \in {\ms C}^{(s)}(X)$ such that
$j_{\bar{A}_\alpha} (g_\alpha) = F\vert_{\bar{A}_\alpha}$.  Then, by
Proposition \ref{prop:decay-estimate}, there exists a super growth
indicator $\psi(t)$ such that, for any $h > 0$ and any $\alpha$, there
exists a constant $C'_h$ such that, for any $l > 0$,
$$
\vert\vert j_A(g_\alpha) - F \vert\vert_{B(A_\alpha,\, l), s, h}
\le C'_h \exp(-\psi(l^{-\sigma})) \ .
$$

Set
$$
g_\epsilon(x) := 
\sum_{i=1}^{\#\Lambda}
\sum_{\beta_1 \prec \beta_2 \prec \dots \prec \beta_i\in\Lambda}
(-1)^i\chi_{\epsilon, \beta_1}(x)
\chi_{\epsilon, \beta_2}(x) \dots
\chi_{\epsilon, \beta_i}(x) g_{\beta_i}(x) \in {\ms C}^{(s)}(X) \ .
$$
We are going to show that $j_A(g_\epsilon)$ converges to $F$ with
respect to the topology of $\Sswj(X)$ when $\epsilon \to 0$. This will
complete the proof.

Let us f\mbox{}ix a stratum $A_\alpha$.  We have
$$
\begin{aligned}
&j_{\bar{A}_\alpha}(g_\epsilon) - F\vert_{\bar{A}_\alpha}= \\
&\qquad =j_{\bar{A}_\alpha}(g_\epsilon) - 
\sum_{i=1}^{\#\Lambda}
\sum_{\beta_1 \prec \beta_2 \prec \dots \prec \beta_i\in\Lambda}
j_{\bar{A}_\alpha}
\left((-1)^i\chi_{\epsilon, \beta_1}(x)
\chi_{\epsilon, \beta_2}(x) \dots
\chi_{\epsilon, \beta_i}(x) \right) 
F\vert_{\bar{A}_\alpha} \\
&\qquad =
\sum_{i=1}^{\#\Lambda}
\sum_{\beta_1 \prec \beta_2 \prec \dots \prec \beta_i\in\Lambda}
j_{\bar{A}_\alpha}
\left((-1)^i\chi_{\epsilon, \beta_1}(x)
\chi_{\epsilon, \beta_2}(x) \dots
\chi_{\epsilon, \beta_i}(x) \right) 
(j_{A}(g_{\beta_i}) - F)\vert_{\bar{A}_\alpha}.
\end{aligned}
$$
Noticing 
$$
\operatorname{supp}(\chi_{\epsilon, \beta_i}) \cap A \subset
W_{\epsilon, \beta_i, l_{\epsilon, \beta_i}} \cap A \subset 
U(A_{\beta_i})
$$
and continuity, we get
$$
j_{\bar{A}_\alpha}
\left((-1)^i\chi_{\epsilon, \beta_1}(x)
\chi_{\epsilon, \beta_2}(x) \dots
\chi_{\epsilon, \beta_i}(x) \right) \equiv 0
$$
if $U(A_{\beta_i}) \cap A_\alpha = \varnothing$. Since $U(A_{\beta_i})
\cap A_\alpha \ne \varnothing$ implies $\bar{A}_\alpha \supset
A_{\beta_i}$, we obtain
$$
\begin{aligned}
&j_{\bar{A}_\alpha}(g_\epsilon) - F\vert_{\bar{A}_\alpha}= \\
&\qquad =
\sum_{i=1}^{\#\Lambda}
\sum_{\beta_1 \prec \beta_2 \prec \dots \prec \beta_i \preceq A_\alpha}
j_{\bar{A}_\alpha}
\left((-1)^i\chi_{\epsilon, \beta_1}(x)
\chi_{\epsilon, \beta_2}(x) \dots
\chi_{\epsilon, \beta_i}(x) \right) 
(j_{A}(g_{\beta_i}) - F)\vert_{\bar{A}_\alpha}.
\end{aligned}
$$
Now, let $\beta_1$, $\dots$, $\beta_i$ be such that 
$\beta_1 \prec \beta_2 \prec \dots \prec \beta_i \preceq A_\alpha$.
As $l_{\epsilon,\beta_1} > l_{\epsilon,\beta_2} > \dots > l_{\epsilon,\beta_i}$,
we have that, for any $h$,
$$
\vert\vert
j_A\left((-1)^i\chi_{\epsilon, \beta_1}(x)
\chi_{\epsilon, \beta_2}(x) \dots
\chi_{\epsilon, \beta_i}(x) \right) 
\vert\vert_{A,s,\#\Lambda h}
\le 
C_h^{\#\Lambda}
\exp\left(\#\Lambda\varphi(l_{\epsilon,\beta_i}^{-\sigma})\right).
$$
We also have, for any $h>0$,
$$
\vert\vert j_A(g_{\beta_i}) - F 
\vert\vert_{B(A_{\beta_i},\, l_{\epsilon, \beta_i}), s, h}
\le C'_h \exp(-\psi(l_{\epsilon, \beta_i}^{-\sigma})).
$$
Since we have 
$W_{\epsilon, \beta_i, l_{\epsilon, \beta_i}} \cap A
\subset 
W_{\epsilon, \beta_i, l_{\epsilon, \beta_i}} \cap 
U(A_{\beta_i}) \subset B(A_{\beta_i},\,l_{\epsilon, \beta_i})$,
we get
$$
\operatorname{supp}(\chi_{\epsilon, {\beta_i}}) \cap \bar{A}_\alpha
\subset W_{\epsilon, \beta_i, l_{\epsilon, \beta_i}} \cap \bar{A}_\alpha
\subset B(A_{\beta_i},\, l_{\epsilon, \beta_i}).
$$
Hence, for any $h>0$, we obtained
$$
\begin{aligned}
&\vert\vert j_{\bar{A}_\alpha}
\left((-1)^i\chi_{\epsilon, \beta_1}(x)
\chi_{\epsilon, \beta_2}(x) \dots
\chi_{\epsilon, \beta_i}(x) \right) 
(j_{A}(g_{\beta_i}) - F)\vert_{\bar{A}_\alpha}
\vert\vert_{\bar{A}_\alpha, s, (\#\Lambda+1)h} \\
&
\qquad\le 
\vert\vert j_{\bar{A}_\alpha}
\left((-1)^i\chi_{\epsilon, \beta_1}(x)
\chi_{\epsilon, \beta_2}(x) \dots
\chi_{\epsilon, \beta_i}(x) \right) 
\vert\vert_{B(A_{\beta_i},\,l_{\epsilon, \beta_i}) \cap \bar{A}_\alpha, s, \#\Lambda h} \\
&\qquad\qquad\qquad\qquad \cdot \vert\vert 
(j_{A}(g_{\beta_i}) - F)\vert_{\bar{A}_\alpha}
\vert\vert_{B(A_{\beta_i},\, l_{\epsilon, \beta_i}) \cap \bar{A}_\alpha, s, h} \\
&\qquad \le
C_h^{\#\Lambda}C'_h \exp(\#\Lambda\varphi(l_{\epsilon, \beta_i}^{-\sigma}) 
-\psi(l_{\epsilon, \beta_i}^{-\sigma})).
\end{aligned}
$$
In the end, if we take a super growth indicator $\varphi(t)$ such that 
$$
\varphi(t) \ll \frac{1}{2\#\Lambda}\psi(t) \ .
$$ 

Then, we obtain
$$
\begin{aligned}
&\vert\vert 
j_{\bar{A}_\alpha}
\left((-1)^i\chi_{\epsilon, \beta_1}(x)
\chi_{\epsilon, \beta_2}(x) \dots
\chi_{\epsilon, \beta_i}(x) \right) 
(j_{A}(g_{\beta_i}) - F)\vert_{\bar{A}_\alpha}
\vert\vert_{\bar{A}_{\alpha}, s, (\#\Lambda+1)h} \\
&\qquad 
\le
CC_h^{\#\Lambda}C'_h \exp\left(-\frac{1}{2}\psi(l_{\epsilon, \beta_i}^{-\sigma})\right) 
\to 0\qquad (\epsilon \to 0)
\end{aligned}
$$
for any $h>0$.
\end{proof}

\addcontentsline{toc}{section}{\textbf{References}}
\bibliography{biblio}{}
\bibliographystyle{abbrv}

\vspace{10mm}

{\small{\sc Naofumi Honda

Department of Mathematics, Faculty of Science,

Hokkaido University

Kita 10, Nishi 8, Kita-Ku, Sapporo, Hokkaido, 060-0810, Japan

}

E-mail address: $\texttt{honda@math.sci.hokudai.ac.jp}$}

\vspace{5mm}

{\small{\sc Giovanni Morando

Dipartimento di Matematica Pura ed Applicata,

Universit{\`a} di Padova,

Via Trieste 63, 35121 Padova, Italy.

}and{\sc

Centro de {\'A}lgebra da Universidade de Lisboa,

Av. Prof. Gama Pinto, 2, 1649-003 Lisboa, Portugal.

}

E-mail address: $\texttt{gmorando@math.unipd.it}$}

\end{document}